\newtheorem{theorem}{Theorem}
\newtheorem{proposition}[theorem]{Proposition}
\newtheorem{lemma}[theorem]{Lemma}
\newtheorem{corollary}[theorem]{Corollary}
\newtheorem{remark}[theorem]{Remark}
\newtheorem{definition}[theorem]{Definition}
\newcommand{\R}{\mathbb{R}}
\newcommand{\T}{\mathbb{T}}
\newcommand{\N}{\mathbb{N}}
\newcommand{\Z}{\mathbb{Z}}
\renewcommand{\d}{\mathrm{d}}
\renewcommand{\div}{\mathrm{div}}
\newcommand{\id}{\mathrm{id}}
\newcommand{\DiffSob}{\mathcal D}
\newcommand{\DiffSobId}{\mathcal D_{\id}}
\newcommand{\DiffSobIdLift}{\widetilde{\mathcal D}_{\id}}
\newcommand{\Diff}{\mathrm{Diff}}
\newcommand{\Hom}{\mathrm{Hom}}
\newcommand{\dist}{\mathrm{dist}_{\DiffSobId^m}}
\newcommand{\Riesz}{\mathcal{R}}
\renewcommand{\L}{\mathcal{L}}
\newcommand{\onesymbol}{\mathrm{I}}
\newcommand{\adjointMap}{\mathrm{Ad}}
\newcommand{\adjointDer}{\mathrm{ad}}
\newcommand{\trunc}[1][R]{\mathcal{T}_{#1}}
\newcommand{\truncFourier}[2]{\widehat{#1}^{#2}}
\newcommand{\truncSpace}{H^m_R}
\newcommand{\Span}{\mathrm{span}}
\newcommand{\Id}{{\mathcal{I}}}
\newcommand{\IPHm}[2]{(\!(#1,#2)\!)_{H^m}} 
\newcommand{\NHm}[1]{\vvvert#1\vvvert_{H^m}} 
\begin{document}

\title{Convergence of spectral discretization for the flow of diffeomorphisms}
\author{Benedikt Wirth}
\date{}

\maketitle

\begin{abstract}
The Large Deformation Diffeomorphic Metric Mapping (LDDMM) or flow of diffeomorphism is a classical framework in the field of shape spaces and is widely applied in mathematical imaging and computational anatomy.
Essentially, it equips a group of diffeomorphisms with a right-invariant Riemannian metric, which allows to compute (Riemannian) distances or interpolations between different deformations.
The associated Euler--Lagrange equation of shortest interpolation paths is one of the standard examples of a partial differential equation that can be approached with Lie group theory
(by interpreting it as a geodesic ordinary differential equation on the Lie group of diffeomorphisms).
The particular group $\DiffSob^m$ of Sobolev diffeomorphisms is by now sufficiently understood to allow the analysis of geodesics and their numerical approximation.
We prove convergence of a widely used Fourier-type space discretization of the geodesic equation.
It is based on a regularity estimate, for which we also provide a new proof:
Geodesics in $\DiffSob^m$ preserve any higher order Sobolev regularity of their initial velocity.
\end{abstract}

%

\section{Introduction and main results}
There are a number of partial differential equations (PDEs) that can be interpreted as a geodesic equation (the Euler--Lagrange equation satisfied by locally shortest paths) on an infinite-dimensional Lie group.
This viewpoint started with the seminal work by Arnol'd in the 1960s on hydrodynamics \cite{Ar66},
who for instance interpreted the Euler equations of inviscid incompressible fluid flow as the geodesic equations on the Lie group of volume-preserving diffeomorphisms endowed with a right-invariant $L^2$-metric.
The great advantage of such a viewpoint is that it often makes these PDEs amenable to an analysis via ordinary differential equation (ODE) techniques
(corresponding sufficient conditions are e.g.\ given in \cite{Ko17}).
It is also exploited for numerics, e.g.\ by devising efficient, structure-preserving solvers based on Hamiltonian system integrators.
Other examples of PDEs that fit into this framework include Burgers' equation, the Camassa-Holm equation, and the KdV equation, see \cite{MiPr10,KhWe09,Ko17} and the references therein.

Yet another classical prototype example is the so-called EPDiff equation
\begin{equation}\label{eqn:EPDiffIntro}
\dot\rho_t=-(\div(\rho_t\otimes v_t)+(Dv_t)^T\rho_t)
\qquad\text{with }
v_t=\Riesz\rho_t=\L^{-1}\rho_t
\end{equation}
for $\L$ a self-adjoint differential operator such as $\L=(1-\Delta)^m$ with $m\geq0$ ($t$ denotes time, the dot time differentiation, and $\Riesz=\L^{-1}$).
The close similarity to the Euler equations becomes apparent for $m=0$, in which case the equation reads $\dot v_t+\div(v_t\otimes v_t)+\nabla\frac{|v_t|^2}2=0$
(the incompressible Euler equations just differ by the additional incompressibility constraint $\div v_t=0$ and the replacement of the internal energy $\frac{|v_t|^2}2$ by the pressure $p$).
This EPDiff equation occurs as the Euler--Lagrange equation when trying to deform (or rather transport) a given image into another one by a time-dependent velocity field $v_t$
with least possible energy $\int_0^1\langle\L v_t,v_t\rangle\,\d t$.
Therefore it is used a lot in computational anatomy, where medical images of a patient have to be mapped to an annotated template image
and where this framework for dealing with deformations is known as Large Deformation Diffeomorphic Metric Mapping (LDDMM).

This EPDiff equation actually turns out to be the geodesic equation on the group $\DiffSob^m$ of diffeomorphisms (e.g.\ of the unit cube) of Sobolev regularity $m$, endowed with a right-invariant Riemannian Sobolev metric:
A geodesic (i.e.\ a locally shortest path) $t\mapsto\phi_t$ of diffeomorphisms can be written as the so-called flow of a velocity field $v_t$, i.e.\ as solution of
\begin{equation*}
\dot\phi_t=v_t\circ\phi_t
\end{equation*}
(in the language of fluid mechanics, $\phi_t$ describes the motion in Lagrangian coordinates, while $v_t$ is the Eulerian description of the motion),
and this velocity field $v_t$ satisfies the EPDiff equation (where $\L$ is related to the employed Riemannian metric $g$ via $g_\id(v,w)=\langle\L v,w\rangle$).

The EPDiff equation represents a typical model setting for applying Lie group techniques to PDEs, see e.g.\ \cite{TrYo15,MiPr10}.
One of the reasons is that by now the group of Sobolev diffeomorphisms and its geometric properties are quite well understood \cite{MiPr10,BrVi17,GuRaRuWi23}
and that the numerical implementation is straightforward.
In particular, \eqref{eqn:EPDiffIntro} lends itself to a space discretization by truncated Fourier series:
For instance, the authors of \cite{ZhFl19} propose a scheme on the $d$-dimensional torus $\T^d$ in which $\rho_t$ and $v_t$ are approximated by bandlimited functions $P_t$ and $V_t$ (numerically represented by their finite Fourier series)
and both right-hand sides in \eqref{eqn:EPDiffIntro} are essentially just truncated in Fourier space to satisfy the band limit, yielding a highly efficient code.
Their intuition is that the smoothing operator $\Riesz$ produces an effective bandlimit anyway.

The aim of this article is to prove that the above numerical space discretization from \cite{ZhFl19} converges if the initial data is regular enough:
We will show in \cref{thm:convergence} a more detailed version of the following.

\begin{theorem}[Convergence of bandlimited EPDiff equation]
Let $m>1+\frac d2$ and the metric on $\DiffSob^m$ be induced by a Fourier multiplier $\L$.
If the initial velocity $v_0$ has Sobolev regularity $m+k$ for $k\geq1$,
the numerical approximation of bandlimiting the right-hand sides in \eqref{eqn:EPDiffIntro} converges to the true solution as the bandlimit $R$ tends to infinity.
Moreover, for $k\geq2$ the error tends to zero at rate $R^{1-k}$.
\end{theorem}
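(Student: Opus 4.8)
The plan is to run a Lax-type argument: the bandlimited scheme is a consistent perturbation of the EPDiff equation, and the quantitative well-posedness of the continuous flow, upgraded by the new regularity estimate, supplies the stability that turns consistency into convergence. I would work on the underlying torus (no boundary terms in integrations by parts) and measure the velocity error in the metric norm $H^m$, equivalently the momentum error in $H^{-m}$.

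\textbf{Setup and uniform a priori bounds.} Write EPDiff as the momentum ODE $\dot\rho_t=F(\rho_t)$ with $F(\rho)=-\div(\rho\otimes\Riesz\rho)-(D\Riesz\rho)^T\rho$, and the scheme as $\dot P_t=\trunc F(P_t)$, $P_0=\trunc\rho_0$. Since $\Riesz=\L^{-1}$ and all differential operators are Fourier multipliers, $\trunc$ commutes with them; hence $P_t$ stays bandlimited and any $\trunc$ meeting a bandlimited factor drops out. Consequently the metric energy $\tfrac12\NHm{v_t}^2$ is conserved along both the exact and the discrete flow, so $\NHm{V_t}=\NHm{V_0}\le\NHm{v_0}$ uniformly in $R$. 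More importantly, the a priori estimate behind the regularity theorem — a state-space differential inequality $\tfrac{\d}{\d t}\|v_t\|_{H^{m+k}}^2\le C(\NHm{v_t})\|v_t\|_{H^{m+k}}^2$ — survives the same commutation and therefore holds along the bandlimited solution as well; with Gronwall this gives $\sup_{t\in[0,T]}(\|v_t\|_{H^{m+k}}+\|V_t\|_{H^{m+k}})\le M$ with $M$ independent of $R$.

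\textbf{Consistency.} Because $v_t\in H^{m+k}$ we have $\rho_t=\L v_t\in H^{k-m}$, and since the only derivative loss in $F$ comes from the transport part $-(\Riesz\rho\cdot\nabla)\rho$, one gets $F(\rho_t)\in H^{k-m-1}$ with $\|F(\rho_t)\|_{H^{k-m-1}}\le C(M)$ on $[0,T]$. The spectral projection error then obeys $\|(\id-\trunc)F(\rho_t)\|_{H^{-m}}\le R^{1-k}\|F(\rho_t)\|_{H^{k-m-1}}\le C(M)R^{1-k}$ for $k\ge2$; for $k=1$ the function $F(\rho_t)$ only lies in $H^{-m}$, and $\|(\id-\trunc)F(\rho_t)\|_{H^{-m}}\to0$ as $R\to\infty$, uniformly in $t\in[0,T]$ by strong convergence of the truncation dominated by $C(M)$. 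Likewise $\|(\id-\trunc)\rho_0\|_{H^{-m}}=O(R^{-k})$.

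\textbf{Error estimate and conclusion.} Set $e_t=\rho_t-P_t$, $w_t=\Riesz e_t$, so $\dot e_t=(\id-\trunc)F(\rho_t)+\trunc(F(\rho_t)-F(P_t))$, and estimate $\|e_t\|_{H^{-m}}$ by the energy method. Writing $F(\rho)=-(\Riesz\rho\cdot\nabla)\rho+B(D\Riesz\rho)\rho$ with $B$ linear in its argument,
\[
F(\rho_t)-F(P_t)=-(v_t\cdot\nabla)e_t-(w_t\cdot\nabla)P_t+B(Dv_t)e_t+B(Dw_t)P_t .
\]
Pairing with $e_t$ in $H^{-m}$: the transport term $\langle(v_t\cdot\nabla)e_t,e_t\rangle_{H^{-m}}$ is controlled by $C(\|v_t\|_{H^{m+1}})\|e_t\|_{H^{-m}}^2$ via skew-symmetry of $v_t\cdot\nabla$ up to $\div v_t\in L^\infty$ and a standard commutator estimate for $[\Lambda^{-m},v_t\cdot\nabla]$ (here $v_t\in H^{m+1}$ because $k\ge1$, with $m$ above the $C^1$-embedding threshold); the mixed term $\langle(w_t\cdot\nabla)P_t,e_t\rangle_{H^{-m}}$ is bounded by $C\|w_t\|_{H^m}\|\nabla P_t\|_{H^{-m}}\|e_t\|_{H^{-m}}\le C(M)\|e_t\|_{H^{-m}}^2$, which is exactly where $k\ge1$ is used, making $\nabla P_t\in H^{k-m-1}\hookrightarrow H^{-m}$ while $\Riesz$ maps $H^{-m}$ into $H^m$; the zeroth-order terms $B(Dv_t)e_t$ and $B(Dw_t)P_t$ are handled by Sobolev multiplication estimates (again using $k\ge1$). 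Altogether $\tfrac{\d}{\d t}\|e_t\|_{H^{-m}}\le C(M)\|e_t\|_{H^{-m}}+\|(\id-\trunc)F(\rho_t)\|_{H^{-m}}$, so Gronwall combined with the previous step gives $\sup_{t\in[0,T]}\|v_t-V_t\|_{H^m}=\sup_t\|\Riesz e_t\|_{H^m}\le C\sup_t\|e_t\|_{H^{-m}}$, which is $O(R^{1-k})$ for $k\ge2$ and tends to $0$ for $k=1$; the diffeomorphism error follows from the flow equations $\dot\phi_t=v_t\circ\phi_t$, $\dot\Phi_t=V_t\circ\Phi_t$ by a further Gronwall argument, inheriting the same rate. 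The main obstacle is precisely this error energy estimate: $F$ loses exactly one derivative, and absorbing that loss in the $H^{-m}$ norm forces both the Bessel-potential commutator estimate and the delicate balance, in the mixed term, of the derivative gained back by $\Riesz$ against the one spent by $\nabla P_t$ — and it is at these points, not in the accounting of the consistency rate, that $k\ge1$ is genuinely needed; a secondary point is ensuring the regularity estimate of the preceding section is phrased as a state-space differential inequality so that it transfers verbatim, with $R$-independent constants, to the bandlimited flow.
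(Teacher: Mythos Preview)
Your overall Gronwall strategy on the $H^m$ velocity error matches the paper's, but there is a substantive gap in how you obtain stability. You rely on an $R$-independent bound $\sup_t\|V_t\|_{H^{m+k}}\le M$, claiming that a differential inequality $\tfrac{\d}{\d t}\|v_t\|_{H^{m+k}}^2\le C(\NHm{v_t})\|v_t\|_{H^{m+k}}^2$ is ``the a priori estimate behind the regularity theorem'' and transfers to the bandlimited flow. This is not how the paper establishes regularity preservation: the argument there is via the closed-form identity $\rho_t=\adjointMap_{\phi_t^{-1}}^*\rho_0$ together with an operator bound on $\adjointMap_\psi:H^{m-k}\to H^{m-k}$, not via a state-space energy inequality. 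Proving such an inequality (with constant depending only on $\NHm{v}$, not on higher norms) would require Kato--Ponce type commutator estimates in negative-order Sobolev spaces to control $\langle\adjointDer_{v}^*\rho,\rho\rangle_{H^{k-m}}$; you do not supply this, and without it your mixed term $\langle(w_t\cdot\nabla)P_t,e_t\rangle_{H^{-m}}$ cannot be closed, since it needs $\|\nabla P_t\|_{H^{-m}}\lesssim\|P_t\|_{H^{1-m}}$ bounded independently of $R$.

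The paper's proof sidesteps this entirely: it never uses any norm of $V_t$ or $P_t$ beyond the conserved $\NHm{V_t}=\NHm{V_0}\le\NHm{v_0}$. The key manipulation is to rewrite the principal error contribution as $\langle[V_t-v_t,\trunc e_t],P_t\rangle+\langle[v_t,\trunc e_t],P_t-\rho_t\rangle$ and then observe that $[V_t-v_t,\trunc e_t]=[e_t,\trunc e_t]=-[(\Id-\trunc)v_t,\trunc e_t]$ (since $[\trunc e_t,\trunc e_t]=0$), so $P_t$ is only ever paired in $H^{-m}$; in the second term $P_t-\rho_t=\L e_t$, and the resulting form $\IPHm{[v_t,\trunc e_t]}{\trunc e_t}$ is controlled via the dedicated structural estimate $\|w^TD\L w\|_{H^{-m-1}}\lesssim\|w\|_{H^m}^2$. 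All higher-order regularity is carried by the \emph{exact} solution $v_t$ alone. Your route could likely be completed by proving the missing differential inequality, but as written the step you flag as secondary (``ensuring the regularity estimate \ldots\ is phrased as a state-space differential inequality'') is in fact the main obstacle, and the paper's device of eliminating the need for discrete higher regularity is precisely the idea you are missing.
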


(Note that numerically the convergence order seems to be better by one, so generically the convergence might be better.)
This shows that the earlier-mentioned intuition may be only partly correct: The smoothing operator $\Riesz$ alone might not produce a strong enough bandlimiting for the approximation to converge, but an additional, stronger smoothing of the initial condition might potentially be required.
(In fact, \cite{ZhFl19} additionally replaces differentiation by finite differences, but this only slightly simplifies the implementation.)
Following the theme that the underlying Lie group structure allows to use ODE techniques, the convergence is proven via Gronwall's inequality.
This is made possible by a regularity result for geodesics in $\DiffSob^m$,
sometimes known as ``no-loss-no-gain (of regularity)'' \cite[Cor.\,7.6]{BaHaMi25}, \cite[Sec.\,5.2]{Br17},
the analysis of which started with \cite{EbMa70}.
In \cref{thm:regularityPreservationLDDMM} we prove a version slightly stronger than existing ones, a short form of which is the following.

\begin{theorem}[Sobolev regularity preservation along geodesics]
Let $m>1+\frac d2$ and the metric on $\DiffSob^m$ be strong, twice differentiable, and induced by some $\L=B^*B+\bar\L$,
where $\bar\L$ is continuous from $H^m$ to $H^{1-m}$ and $H^m\times\DiffSob^m\ni(v,\phi)\mapsto[B(v\circ\phi^{-1})]\circ\phi\in L^2$ is differentiable.
If the initial velocity $v_0$ has Sobolev regularity $m+k$ for $k\geq1$,
then so does the velocity $v_t$ for all $t$.
\end{theorem}

This result builds upon the knowledge acquired during recent years on $\DiffSob^m$,
in particular the well-posedness of geodesics, the existence of shortest geodesics, and the rigorous derivation of the geodesic equation \cite{MiPr10,BrVi17,GuRaRuWi23}.
For smooth Riemannian metrics it can be derived from the right-invariance of the metric and the resulting Riemannian exponential map:
Indeed, via the right-invariance, (higher order) differentiability of the exponential can be traded in for preservation of (higher order) Sobolev exponents \cite[Cor.\,7.6]{BaHaMi25}, \cite[Sec.\,5.2]{Br17}.
However, that argument does not exploit that the derivatives of the exponential (such as $D\exp_\id(v)$) need only be bounded in very specific directions (such as $Dv$)
so that consequently the smoothness requirement on the metric can be weakened to the above.
Our result
confirms a conjecture of \cite[Rem.\,4.2]{MiPr10} who could only show preservation of Sobolev regularity $m+k$ for $k>m+\frac d2$ in space dimension $d$
(\cite[Thm.\,4.1]{MiPr10} is only stated for a particular class of smooth metrics, but the proof does not require the full smoothness).

Zhang and Fletcher viewed their numerical approximation in \cite{ZhFl19} as solving the EPDiff equation in a finite-dimensional \emph{approximate} Lie algebra
(in which the Lie bracket is replaced by a similar bilinear antisymmetric operation).
We will briefly discuss why an approximation via a Lie algebra of bandlimited functions with the original Lie bracket cannot exist.

Finally note that LDDMM and the EPDiff equation are also frequently employed with alternative smoothing operators $\Riesz$ such as convolution with a Gaussian.
None of the shown analysis applies to these settings, since almost nothing is so far known about the geometry of the associated group of diffeomorphisms:
Is it a manifold, and what (Banach) space $X$ is it modelled over?%
\footnote{Note that some progress has been made for groups of diffeomorphisms that are flows of vector fields from \emph{non-Hilbert} function spaces \cite{NeRa19}.}
Is the right-invariant Riemannian metric induced by $\L=\Riesz^{-1}$ smooth so that geodesics are well-defined?
Our numerical analysis further
requires an affirmative answer to the following additional questions:
Is $X$ a Banach algebra so that one can make sense of the quadratic terms in \eqref{eqn:EPDiffIntro}?
Does an estimate of the form $\|(\L v)Dv\|_{(X^+)'}\leq\|v\|_X^2$ hold with $X^+=\{v\in X\,|\,Dv\in X^d\}$ and $(X^+)'$ its dual space?
Do geodesics preserve the property $v_t\in X^+$ or even higher differentiability?

The outline of the article is as follows.
In \cref{sec:SobolevDiffeomorphisms} we recapitulate the known theory of Sobolev diffeomorphisms in order to introduce all necessary notions.
In \cref{sec:estimates} we prove estimates of operations with diffeomorphisms, in particular we prove the new regularity result for geodesics.
\Cref{sec:discretization} introduces the spectral space discretization and discusses principal obstructions to structure-preserving discretizations.
Finally, \cref{sec:convergence} proves the convergence of the discretization.


Below we briefly introduce some notation employed throughout.
We will work on the $d$-dimensional flat torus $\T^d$ (in applications $d\in\{2,3\}$),
which will be identified with $[0,1)^d$ with periodic boundary conditions.
The single stroke norms $|\cdot|$ and $|\cdot|_\infty$ denote the Euclidean $\ell^2$- and the $\ell^\infty$-norm on finite-dimensional vectors or tensors.
The identity matrix is $\onesymbol$, the identity operator on a function space is denoted by $\Id$, and the identity function by $\id$.
If we write $\id:\T^d\to\R^d$, then we identify $\T^d$ with $[0,1)^d$.
The adjoint of a linear operator $P$ is written as $P^*$.

We will employ the following function spaces:
$H^l(\T^d)$ denotes the Sobolev space of scalar functions on the torus with square-integrable weak derivatives up to order $l$.
For simplicity we will only work with integer orders; the special case $l=0$ refers to square-integrable Lebesgue functions.
For $\R^d$-valued functions we use the notation $H^l(\T^d;\R^d)$.
The notation $\|\cdot\|_{H^l}$ then refers to any one of the equivalent $H^l$-norms (for vector-valued functions simply taken componentwise).
In contrast, $\NHm{\cdot}$ corresponds to one particular $H^m$-norm (the one chosen when defining a metric on the space of Sobolev diffeomorphisms).
The corresponding dual spaces are denoted $H^{-l}(\T^d)$ and $H^{-l}(\T^d;\R^d)$, respectively.
Note that the diffeomorphisms will actually be functions in $H^m(\T^d;\T^d)$ (so domain and codomain are the torus);
this simply means that the diffeomorphism is in $H^m$ if restricted to any simply connected neighbourhood
and expressed in local coordinates of domain and co-domain (cf.\ \cite[\S\,4.3]{Ta23}; the local coordinates are obtained by the identification of $\T^d$ with $\R^d$ modulo the integer lattice).
The Sobolev space of functions with $l$ essentially bounded derivatives is denoted by $W^{l,\infty}$ (domain and codomain are indicated as for $H^l$) with norm $\|\cdot\|_{W^{l,\infty}}$,
where the special case $l=0$ indicates essentially bounded Lebesgue functions.
$C^n$ denotes $n$ times continuously differentiable functions (again domain and codomain will be indicated as for $H^l$), with $C^\infty$ representing infinitely often differentiable functions.
Finally, we will employ the Bochner space $L^1([0,1];H^m(\T^d;\R^d))$ with norm $\|\cdot\|_{L^1([0,1];H^m)}$ of absolutely integrable paths in $H^m(\T^d;\R^d)$.

For a function $w$ depending on time, a subscript $t$ as in $w_t$ denotes evaluation at time $t$, while a dot as in $\dot w_t$ denotes time differentiation.
The spatial derivative operator is denoted $D$, and we write $D\phi^{-1}$ for $D(\phi^{-1})$ with $\phi^{-1}$ the inverse of a diffeomorphism $\phi$ (as opposed to $(D\phi)^{-1}$).
We will use the Landau notations $f\in o(g)$ and $f\in O(g)$, and $f\lesssim g$ indicates that there is a constant $C>0$
(which may depend on $d$ and on the particular choices made for the involved norms) such that $f\leq Cg$.
Moreover, we will use $C$ as a generic constant that may change its value from line to line.

\section{The Riemannian manifold of Sobolev diffeomorphisms}\label{sec:SobolevDiffeomorphisms}
In this section we introduce all necessary notions and summarize what is known about the Riemannian manifold of Sobolev diffeomorphisms.
We consider diffeomorphisms on the $d$-dimensional flat torus $\T^d$, since the spectral discretization is devised for this setting.
However, statements analogous to the ones of this section also apply to other domains such as $\R^d$ or smooth compact manifolds (in the latter case one has to work with charts).

\begin{definition}[Group of Sobolev diffeomorphisms]
Let $m>\frac d2+1$.
The group of \emph{Sobolev diffeomorphisms} (on the torus) of Sobolev regularity $m$ is
\begin{equation*}
\DiffSob^m=\{\phi\in H^m(\T^d;\T^d)\,|\,\phi^{-1}\in H^m(\T^d;\T^d),\,\phi\text{ preserves orientation}\}
\end{equation*}
with group product $(\phi,\psi)\mapsto\phi\circ\psi$.
The subgroup formed by the connected component (in $H^m(\T^d;\T^d)$) of the identity is denoted $\DiffSobId^m$.
\end{definition}

That $\DiffSob^m$ actually forms a group follows from the regularity estimate
\begin{equation}\label{eqn:regularityComposition}
\|\psi\circ\phi\|_{H^s}\leq C\|\psi\|_{H^s}\qquad\text{for }0\leq s\leq m\text{ and all }\psi\in H^s(\T^d)
\end{equation}
with the constant $C$ depending on $\phi\in\DiffSob^m$ \cite[Lemma 2.7]{InKaTo13}.
Due to the continuous Sobolev embedding $H^m\hookrightarrow C^1$ for $m>\frac d2+1$, the group $\DiffSob^m$ is a subgroup of
\begin{equation*}
\Diff^1=\{\phi\in C^1(\T^d;\T^d)\,|\,\phi^{-1}\in C^1(\T^d;\T^d),\,\phi\text{ preserves orientation}\},
\end{equation*}
the group of $C^1$ diffeomorphisms of the torus.

We will only be concerned with $\DiffSobId^m$;
statements about other connected components $\mathcal C\subset\DiffSob^m$ (which do not form subgroups, though, for lack of the identity)
follow via the identification $\mathcal C=\phi\circ\DiffSobId^m$ for an arbitrary $\phi\in\mathcal C$.
At least in low dimensions, the decomposition into connected components is understood:
For $d=2$ the quotient of the group under identifying elements from the same connected component, the so-called mapping class group of $\T^2$,
is given by $\mathrm{SL}(2,\Z)$ \cite[Thm.\,2.5]{FaMa12} (in fact, this pertains from the group of homeomorphisms to the group of $C^\infty$-diffeomorphisms \cite[\S\,1.4]{FaMa12}).
For $d=3$ the mapping class group for homeomorphisms on $\T^d$ (and likely also for diffeomorphisms) is $\mathrm{SL}(3,\Z)$ \cite{Ha76};
for $d=4$ little seems to be known, and for $d\geq5$ the mapping class groups are more complicated and also depend on whether homeomorphisms or diffeomorphisms are considered \cite[Thm.\,4.1]{Ha78}.

It is well-known that $\DiffSobId^m$ is a differential ($C^\infty$) manifold modelled over the Hilbert space $H^m(\T^d;\R^d)$ \cite{MiPr10}.
In fact, this can be seen by interpreting $\DiffSobId^m$ as the quotient space $\DiffSobIdLift^m/\Z^d$ for
\begin{equation*}
\DiffSobIdLift^m=\{\phi:\T^d\to\R^d\,|\,\phi-\id\in H^m(\T^d;\R^d),\,\det D\phi>0\}
\end{equation*}
upon which the group $\Z^d$ acts by $\Z^d\times\DiffSobIdLift^m\ni(z,\phi)\mapsto T_z\phi\in\DiffSobIdLift^m$ with $(T_z\phi)(x)=\phi(x)+z$.
Essentially, taking the quotient just identifies the codomain $\R^d$ with $\T^d$ by taking the modulus with respect to one.
Now due to $H^m\hookrightarrow C^1$, the set $\DiffSobIdLift^m-\id$ is an open subset of $H^m(\T^d;\R^d)$ and thus a differential ($C^\infty$) manifold modelled over that space.
Therefore, the quotient $\DiffSobIdLift^m/\Z^d$ is a smooth submersion by the Quotient Manifold Theorem;
it is actually even a smooth covering map \cite[Thm.\,9.19]{Le03} so that $\DiffSobId^m$ is a differential manifold modelled over the same Hilbert space.
The submersion is given by the map
\begin{equation*}
\pi:\DiffSobIdLift^m\to\DiffSobId^m,\
\pi(\phi)=p\circ\phi
\quad\text{with}\quad
p:\R^d\to\T^d,\,p(x)=\left(\begin{smallmatrix}x_1\text{ mod }1\\\vdots\\x_d\text{ mod }1\end{smallmatrix}\right).
\end{equation*}
The tangent space to $\DiffSobId^m$ at any $\phi\in\DiffSobId^m$ is obviously given by
\begin{equation*}
T_\phi\DiffSobId^m=H^m(\T^d;\R^d).
\end{equation*}
From now on, we will denote by $\|\phi-\id\|_{H^m}$ for $\phi\in\DiffSobId^m$ the smallest $H^m$-norm along its fibre,
\begin{equation*}
\|\phi-\id\|_{H^m}=\min_{\psi\in\pi^{-1}(\phi)}\|\psi-\id\|_{H^m},
\end{equation*}
thus essentially we implicitly identify a Sobolev diffeomorphism $\phi$ with an element of $\DiffSobIdLift^m$.
Similarly, for any $\phi\in\DiffSobIdLift^m$ and $f\in H^s(\T^d)$ we will write
\begin{equation*}
f\circ\phi
\quad\text{for}\quad
f\circ \pi(\phi).
\qquad\text{and}\qquad
f\circ\phi^{-1}
\quad\text{for}\quad
f\circ \pi(\phi)^{-1}.
\end{equation*}

Since $\DiffSobId^m$ (and thus also $\DiffSob^m$) is a group as well as a manifold, it behaves in some aspects like a Lie group and is often formally treated like one.
However, inversion and left-multiplication are not smooth (not even differentiable), which is typically required for a Lie group
(such groups have been termed \emph{half-Lie groups} in \cite{KrMiRa15,MaNe18}).
In fact, denoting the group of homeomorphisms and $C^\infty$-diffeomorphisms on the torus by $\Hom$ and $\Diff^\infty$, respectively,
by \cite[III, Exerc.\,\S4, \P7]{Bo72} there cannot exist a Banach Lie group $G$
such that $\Diff^\infty\subset G\subset\Hom$ and the inclusions $\Diff^\infty\hookrightarrow G\hookrightarrow\Hom$ are continuous homomorphisms.
In more detail, one can construct diffeomorphisms $\phi\in\Diff^\infty$ that are arbitrarily close to $\id$, but do not belong to any one-parameter subgroup of $\Diff^\infty$;
thus they do not belong to a one-parameter subgroup of $G$ either.
This, however, contradicts the local surjectivity of the Lie exponential map in Banach Lie groups.

The manifold $\DiffSobId^m$ is equipped with a right-invariant Riemannian metric.

\begin{definition}[Right-invariant Sobolev metric]\label{def:SobolevMetric}
Let $\IPHm{\cdot}{\cdot}$ be an inner product of $H^m(\T^d;\R^d)$ with induced norm $\NHm{\cdot}$ equivalent to the standard $H^m$-norm, then
\begin{equation}\label{eqn:metric}
g_\phi(v,w)=\IPHm{v\circ\phi^{-1}}{w\circ\phi^{-1}}
\qquad\text{for }
\phi\in\DiffSobId^m,\,v,w\in T_\phi\DiffSobId^m
\end{equation}
is the induced \emph{right-invariant Riemannian Sobolev metric} on $\DiffSobId^m$.\footnote{%
Note that in contrast to most standard textbooks on Riemannian geometry
we do not impose any smoothness requirements on a Riemannian metric, but simply take it to be a collection of inner products -- one on each tangent space to the Riemannian manifold.
Depending on the choice of $\IPHm\cdot\cdot$, \eqref{eqn:metric} will of course entail \emph{some} regularity of the map $\phi\mapsto g_\phi$, but this is ignored for now.%
}
\end{definition}

The right-invariance $g_{\phi\circ\psi}(v\circ\psi,w\circ\psi)=g_\phi(v,w)$ follows from the definition, the well-definedness of the metric follows from \eqref{eqn:regularityComposition}.
The metric is strong in the sense that it induces the topology of the tangent space;
sometimes also weak metrics (inducing a weaker topology) are considered in the literature, which we will come back to in \cref{rem:weakMetrics}.
We will denote the Riesz isomorphism associated with $\IPHm{\cdot}{\cdot}$ by
\begin{equation*}
\Riesz:H^{-m}(\T^d;\R^d)\to H^m(\T^d;\R^d)
\qquad\text{with inverse }
\L=\Riesz^{-1}.
\end{equation*}
It is defined via the relation $\langle v,w\rangle=\IPHm{\Riesz v}{w}$ for all $v\in H^{-m}(\T^d;\R^d)$ and $w\in H^m(\T^d;\R^d)$,
and $\L$ typically is a differential operator, e.g.\ $\L=(1-\Delta)^m$.

Any Riemannian metric induces a path energy and Riemannian distance.

\begin{definition}[Path energy and Riemannian distance on $\DiffSobId^m$]
The \emph{Riemannian distance} $\dist$ on $\DiffSobId^m$ induced by the right-invariant Riemannian Sobolev metric is given by
\begin{equation*}
\dist(\chi,\psi)^2
=\inf_{\substack{\phi:[0,1]\to\DiffSobId^m\\\phi_0=\chi,\,\phi_1=\psi}}E[\phi]
\end{equation*}
for the \emph{path energy}
\begin{equation*}
E[\phi]=\int_0^1g_{\phi_t}(\dot\phi_t,\dot\phi_t)\,\d t=\int_0^1\IPHm{\dot\phi_t\circ\phi_t^{-1}}{\dot\phi_t\circ\phi_t^{-1}}\,\d t.
\end{equation*}
\end{definition}

It is a standard argument that the path energy $E$ is an upper bound for the squared path length (with equality on paths that have unit speed) \cite[Prop.\,1.8.7]{Kl95a},
which is why $\dist$ is a length metric.
Shortest connecting paths, i.e.\ shortest geodesics between two elements $\phi_0,\phi_1\in\DiffSobId^m$, are paths $\phi:[0,1]\to\DiffSobId^m$ that minimize $E[\phi]$
among all paths with same end points $\phi_0,\phi_1$.
Obviously, an equivalent characterization of geodesics is as solutions of the constrained problem
\begin{equation*}
\min_{\substack{\phi:[0,1]\to\DiffSobId^m\\v:[0,1]\to H^m(\T^d;\R^d)}}
\int_0^1\IPHm{v_t}{v_t}\,\d t
\quad\text{such that }\dot\phi_t=v_t\circ\phi_t.
\end{equation*}

\begin{definition}[Eulerian velocity and flow]
Let $\phi:[0,1]\to\DiffSobId^m$, $v:[0,1]\to H^m(\T^d;\R^d)$ satisfy
\begin{equation}\label{eqn:flow}
\dot\phi_t=v_t\circ\phi_t,
\end{equation}
then the (time-dependent) vector field $v$ is the \emph{Eulerian velocity} field of the path $\phi$, while $\phi$ is the \emph{flow} of $v$.
\end{definition}

The geodesic equation is the optimality condition for the minimization of $E$ and thus a second order ODE on the manifold $\DiffSobId^m$ (which is a PDE on $[0,1]\times\T^d$).
On honest Lie groups with right-invariant metric, the geodesic ODE is invariant under right action of the group on itself so that by Noether's Theorem it can be reduced to a first order ODE.
One can apply the same argument to formally derive such a first order geodesic equation on $\DiffSobId^m$:
Let $\phi$ be a geodesic path with velocity $v$ and let $\eta\in C^\infty([0,1]\times\T^d)$ be an infinitesimal perturbation with $\eta_0=\eta_1=0$.
Defining $\phi^\varepsilon_t=(\id+\varepsilon\eta_t)\circ\phi_t$,
its velocity is given by $v^\varepsilon_t=\dot\phi^\varepsilon_t\circ(\phi^\varepsilon_t)^{-1}=\left[(\onesymbol+\varepsilon D\eta_t)v_t+\varepsilon\dot\eta_t\right]\circ(\id+\varepsilon\eta_t)^{-1}$.
Assuming sufficient differentiability, the optimality conditions read
\begin{multline}\label{eqn:optimalityConditions}
0=\left.\frac\d{\d\varepsilon}E[\phi^\varepsilon]\right|_{\varepsilon=0}
=\left.\frac\d{\d\varepsilon}\int_0^1\IPHm{v^\varepsilon_t}{v^\varepsilon_t}\,\d t\right|_{\varepsilon=0}
=\left.2\int_0^1\langle\L v^\varepsilon_t,\tfrac{\d v^\varepsilon_t}{\d\varepsilon}\rangle\,\d t\right|_{\varepsilon=0}\\
=2\int_0^1\langle\L v_t,D\eta_tv_t+\dot\eta_t-Dv_t\eta_t\rangle\,\d t
=2\int_0^1\langle\L v_t,[v_t,\eta_t]+\dot\eta_t\rangle\,\d t
\end{multline}
for the Lie bracket
\begin{equation*}
[v,w]=(Dw)v-(Dv)w.
\end{equation*}
The Lie bracket is usually written in terms of the adjoint representation $\adjointDer_vw$:
Differentiating the conjugation isomorphism $\iota_\phi:\psi\mapsto\phi\circ\psi\circ\phi^{-1}$ at $\psi=\id$ yields the so-called adjoint map
\begin{equation*}
\adjointMap_\phi=\partial_\psi\iota_\phi|_{\psi=\id}
\qquad\text{with }
\adjointMap_\phi w=(D\phi\,w)\circ\phi^{-1},
\end{equation*}
whose derivative with respect to $\phi$ is known as the (Lie algebra) adjoint representation
\begin{equation*}
\adjointDer_vw=\partial_\phi\adjointMap_\phi(v)w|_{\phi=\id}=-[v,w].
\end{equation*}
Thus, after an integration by parts and applying the fundamental lemma of the calculus of variations to \eqref{eqn:optimalityConditions} we arrive at the formal geodesic ODE
\begin{equation}\label{eqn:EPDiff}
\dot\rho_t
=-\adjointDer_{v_t}^*\rho_t
=-(\div(\rho_t\otimes v_t)+(Dv_t)^T\rho_t)
\qquad\text{for the momentum }
\rho_t=\L v_t.
\end{equation}
This first order ODE is known as the EPDiff or Euler--Poincar\'e equation \cite{TrYo15};
its integral yields $\rho$ or equivalently $v$, from which then the geodesic path $\phi$ can be calculated as the flow.
Note that the derivation was merely formal since along the way we illegally differentiated the group product of $\DiffSobId^m$
(e.g.\ $\adjointMap_\phi$ with $\phi\in\DiffSobId^m$ maps $H^m(\T^d;\R^d)$ only into $H^{m-1}(\T^d;\R^d)$).
As a consequence, due to $v_t\in H^m(\T^d;\R^d)$ and $\rho_t\in H^{-m}(\T^d;\R^d)$, the right-hand side of \eqref{eqn:EPDiff} only lies in $H^{-m-1}(\T^d;\R^d)$, and the equation may not be well-defined.

With these preliminaries it turns out that for $m>\frac d2+1$
\begin{enumerate}
\item
$\DiffSobId^m$ is a topological group (this can be checked directly, exploiting that the constant in \eqref{eqn:regularityComposition} only depends on $\min_x\det D\phi(x)$ and $\|\phi-\id\|_{H^m}$),
\item
the flow $\phi$ (starting from $\phi_0=\id$) of any velocity $v\in L^1([0,1];H^m(\T^d;\R^d))$ lies in $\DiffSobId^m$ \cite{BrVi17},
\item
paths of finite energy in $\DiffSobId^m$ exist between any two elements of $\DiffSobId^m$
(since $\DiffSob^m$ is open and locally connected in $H^m(\T^d;\R^d)$, its connected components, in particular $\DiffSobId^m$, are open; 
now connected open subsets of normed spaces are polygonally connected, 
and a polygon in $\DiffSobId^m$, consisting of linear interpolation segments $t\mapsto t\phi_1+(1-t)\phi_0$, has finite path energy),
\item
shortest geodesics in $\DiffSobId^m$ exist between any two elements of $\DiffSobId^m$ \cite{Tr95a}
(Trouv\'e showed this for the set of diffeomorphisms reachable via a path of finite energy, which by the previous two points turns out to be $\DiffSobId^m$),
\item
$\DiffSobId^m$ with Riemannian distance $\dist$ is complete as a metric space \cite{Tr95a},
\item
for the inner product $\IPHm{v}{w}=\langle\L v,w\rangle$ with $\L=(1-\Delta)^m$
or a more general differential operator of order $2m$ with smooth coefficients the metric is smooth \cite[Thm.\,4.1]{MiPr10} (this even holds for any elliptic invertible operator $\L\in\mathrm{OPS}_{1,0}^{2m}$ of order $2m$ \cite[Thm.\,4.16 \& Thm.\,5.1]{BaBrCiEsKo20}, see \cite{EsKo14,BaEsKo15} for the Fourier multiplier case),
thus the geodesic ODE is locally uniquely solvable by classical differential geometry and ODE theory \cite[\S\,1.6]{Kl95a},
its solution depends smoothly on the initial conditions,
and by the Inverse Function Theorem the Riemannian exponential map is a local diffeomorphism,
\item
for the same inner product, $\DiffSobId^m$ is geodesically complete, i.e.\ geodesics can be extended for all times
(this is a consequence of the previous two points by \cite[Prop.\,6.5]{La95}),
\item
for an inner product $\IPHm{v}{w}=\langle\L v,w\rangle$ with $\L=B^*B+\bar\L$
for $B$ a differential operator of order $m$ with bounded coefficients and compact self-adjoint $\bar\L:H^m(\T^d;\R^d)\to H^{-m}(\T^d;\R^d)$, continuous into $H^{-m+1}(\T^d;\R^d)$,
geodesics satisfy a weak PDE, whose strong form is \eqref{eqn:EPDiff} \cite{GuRaRuWi23} (cf.\ \cref{thm:GuRaRuWi23}).
\end{enumerate}

In summary, with an appropriate inner product $\IPHm{\cdot}{\cdot}$, the group $\DiffSobId^m$ is a metrically and geodesically complete Riemannian manifold
in which shortest geodesics between any two elements exist, which satisfy a weak PDE, whose strong form is \eqref{eqn:EPDiff}.

\section{Refined and new regularity estimates}\label{sec:estimates}
In this section we give a regularity result for geodesics, \cref{thm:regularityPreservationLDDMM}, that allows to prove convergence of the spectral space discretization.
Furthermore, in order to be able to provide the explicit dependence of the final error estimates on the initial condition $v_0$ of the geodesic,
we revisit and refine in \cref{thm:Regulcomposition,thm:normEstDeform} a few known regularity estimates for operations with Sobolev diffeomorphisms.
For completeness we also provide in \cref{thm:smoothMetric} a sufficient condition for $l$-fold differentiability of the Riemannian metric on $\DiffSobId^m$,
since our convergence results require two- and threefold differentiability.
Again the results and proofs can readily be adapted from the torus $\T^d$ to other domains such as $\R^d$ or smooth compact manifolds.

We first recall a continuity result from \cite{BeHo21} (there given for general Lipschitz domains and integrability exponents) for pointwise multiplication of Sobolev functions that we will frequently make use of.
\begin{lemma}[{Continuity of multiplication, \cite[Cor.\,6.3, Thm.\,7.4]{BeHo21}}]\label{thm:BeHo21}
Let $0\leq s\leq r,m$ with $r+m>s+\frac d2$, then pointwise multiplication is a continuous bilinear map $H^m(\T^d)\times H^r(\T^d)\to H^s(\T^d)$.
\end{lemma}

We begin with an estimate for the composition with a small deformation (iterations of which will then yield estimates for general diffeomorphisms),
which refines the following statement from \cite{InKaTo13} (given there on $\R^d$ instead of $\T^d$, but the proof carries over).
\begin{lemma}[{Continuity of composition, \cite[Lemma 2.7]{InKaTo13}}]\label{thm:InKaTo13}
Let $m>\frac d2+1$ and $0\leq s\leq m$.
Then $H^s(\T^d)\times\DiffSob^m\ni(f,\phi)\mapsto f\circ\phi\in H^s(\T^d)$ is continuous.
Moreover, for any $m,M>0$ there exists $C>0$ such that $\det D\phi\geq m$ and $\|\phi-\id\|_{H^m}\leq M$ for $\phi\in\DiffSob^m$ imply
\begin{equation*}
\|f\circ\phi\|_{H^s}\leq C\|f\|_{H^s}
\qquad\text{for all }
f\in H^s(\T^d).
\end{equation*}
\end{lemma}
Our refinement consists in the precise control of the involved constant in terms of how close the diffeomorphism is to the identity.
Below we will abbreviate the closed norm ball in $H^m(\T^d;\R^d)$ of radius $\varepsilon>0$ by
\begin{equation*}
\overline{B_\varepsilon(0)}=\{\psi\in H^m(\T^d;\R^d)\,|\,\|\psi\|_{H^m}\leq\varepsilon\}.
\end{equation*}

\begin{lemma}[Regularity of composition]\label{thm:Regulcomposition}
Let $m>\frac d2+1$ and $0\leq s\leq m$.
Let $\varepsilon>0$ small enough such that $\id+\overline{B_\varepsilon(0)}\subset\DiffSobIdLift^m$. There exists $C>0$ such that
\begin{equation*}
\|f\circ\phi\|_{H^s}\leq(1+C\|\phi-\id\|_{H^m})\|f\|_{H^s}
\qquad\text{for all }
\phi\in\id+\overline{B_\varepsilon(0)},\ f\in H^s(\T^d).
\end{equation*}
\end{lemma}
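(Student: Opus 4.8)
The plan is to prove the estimate for $s=m$ first (the highest-regularity case), which captures the essential difficulty, and then explain how the same argument handles general $0\le s\le m$. Write $\phi=\id+\psi$ with $\psi\in\overline{B_\varepsilon(0)}$, so that $f\circ\phi=f\circ(\id+\psi)$. The idea is to interpolate: set $\phi^\tau=\id+\tau\psi$ for $\tau\in[0,1]$, which lies in $\id+\overline{B_\varepsilon(0)}\subset\DiffSobIdLift^m$ by the choice of $\varepsilon$, and consider the scalar function $\tau\mapsto\|f\circ\phi^\tau\|_{H^m}$. If $f$ is smooth, then $\frac{\d}{\d\tau}(f\circ\phi^\tau)=(Df\circ\phi^\tau)\psi$, and one estimates $\|(Df\circ\phi^\tau)\psi\|_{H^m}$. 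Integrating this over $\tau$ and using $f\circ\phi^0=f$ gives a bound of the form $\|f\circ\phi\|_{H^m}\le\|f\|_{H^m}+C\|\psi\|_{H^m}\|f\|_{H^m}$, provided one can show $\|(Df\circ\phi^\tau)\psi\|_{H^m}\lesssim\|\psi\|_{H^m}\|Df\|_{H^m}$ (note that $\|Df\|_{H^m}$ is not controlled by $\|f\|_{H^m}$, so a naive version of this fails and a more careful interpolation or a Gronwall-type argument is needed).

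The cleaner route, and the one I would actually carry out, avoids differentiating in the "wrong" direction. Instead, use that $H^m(\T^d)$ is a Banach algebra (since $m>\frac d2$) together with the chain rule and a product estimate. Concretely, to bound $\|f\circ\phi\|_{H^m}$ one expands the $m$-th derivative of $f\circ\phi$ via the Faà di Bruno / chain rule into a sum of terms of the form $(D^jf\circ\phi)\cdot(\text{products of derivatives }D^{i_1}\phi,\dots,D^{i_j}\phi)$ with $i_1+\dots+i_j = m$, $1 \le i_l$. The factor $D^jf\circ\phi$ is estimated in $L^2$ (or the appropriate $L^p$ given by Gagliardo--Nirenberg) by $\|D^jf\|_{L^2}$ times a constant depending only on $\|\det D\phi^{-1}\|_{L^\infty}$ — this is the change-of-variables step and uses $\phi\in\DiffSobIdLift^m$. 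The factors involving $\phi$ are either $\id$-terms (contributing the leading $1$) or genuine $\psi$-derivatives $D^{i_l}\psi$; since at least one such factor must be present in every term except the single "all derivatives hit $f$, none hit $\phi$ beyond first order" term where $D\phi=\onesymbol+D\psi$, the decomposition naturally separates into $\|f\|_{H^m}$ (the main term, where $D\phi$ is replaced by $\onesymbol$) plus a remainder linear in $\psi$ and its derivatives. Using the Banach algebra property and the Sobolev embedding $H^{m-1}\hookrightarrow L^\infty$ for the lower-order $D^{i_l}\psi$ factors, every remainder term is bounded by $C\|\psi\|_{H^m}\|f\|_{H^m}$, where $C$ absorbs the powers of $\|D\phi\|_{W^{m-1,\infty}}\le 1+C\varepsilon$ and the Jacobian bound, both uniform over $\phi\in\id+\overline{B_\varepsilon(0)}$.

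To make the change-of-variables constant explicit and uniform, I would note that for $\phi\in\id+\overline{B_\varepsilon(0)}$ one has $\det D\phi\ge 1-C\varepsilon>0$ (for $\varepsilon$ small, using $H^{m-1}\hookrightarrow L^\infty$ on $D\psi$), so $\|\,|\det D\phi|^{-1}\|_{L^\infty}\le 2$ say, whence $\|g\circ\phi\|_{L^2}^2=\int|g\circ\phi|^2=\int|g|^2|\det D\phi^{-1}|\le 2\|g\|_{L^2}^2$ for any $g\in L^2$. Then the density of $C^\infty$ in $H^m$ and $H^s$, combined with estimate \eqref{eqn:regularityComposition} to pass the composition bound to the limit, extends everything from smooth $f$ to general $f\in H^s(\T^d)$. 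The main obstacle is bookkeeping: cleanly isolating the leading term $\|f\|_{H^m}$ from the chain-rule expansion so that the prefactor is exactly $(1+C\|\phi-\id\|_{H^m})$ rather than, say, $(1+C\|\phi-\id\|_{H^m})^m$ or $C(1+\|\phi-\id\|_{H^m})$. This is handled by observing that the leading term is precisely the one in which no derivative falls on $\psi$ at all — i.e. where $D\phi$ contributes only its constant part $\onesymbol$ — and that this term, after the change of variables, is exactly $\|f\|_{H^m}$ up to the Jacobian factor which itself is $1+O(\|\psi\|_{H^m})$; all other terms are genuinely linear-or-higher in $\psi$ and get swept into $C\|\phi-\id\|_{H^m}\|f\|_{H^m}$ (higher powers of $\|\psi\|_{H^m}$ being dominated since $\|\psi\|_{H^m}\le\varepsilon$ is bounded). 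For general $s<m$ the identical expansion applies with $m$ replaced by $s$ in the derivative count, while the $\phi$-factors still only need $\|\phi-\id\|_{H^m}$ (since derivatives of $\psi$ up to order $s\le m$ appear), giving the stated inequality verbatim.
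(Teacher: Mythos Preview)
Your approach is essentially correct but differs from the paper's. The paper proceeds by induction on $s$: the base case $s=0$ is the change-of-variables estimate $\|f\circ\phi\|_{L^2}^2=\int|f|^2\det D\phi^{-1}\le(1+C\|\phi-\id\|_{H^m})^2\|f\|_{L^2}^2$; for the inductive step it writes $D(f\circ\phi)=(Df\circ\phi)\,D\phi$, applies the hypothesis at level $s-1$ to each component of $Df$, and then uses a product estimate (multiplication by $D\phi-\onesymbol\in H^{m-1}$ is bounded on $H^{s-1}$, citing \cite[Lemma 2.3]{InKaTo13}) to absorb the factor $D\phi=\onesymbol+(D\phi-\onesymbol)$. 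This gives the prefactor $(1+C\|\phi-\id\|_{H^m})$ with $C$ depending on $s$ and $\varepsilon$ but otherwise cleanly.

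Your direct Fa\`a di Bruno expansion gets to the same place but with more bookkeeping, and one step in your sketch is not quite right as written: you propose to put ``the lower-order $D^{i_l}\psi$ factors'' into $L^\infty$ via $H^{m-1}\hookrightarrow L^\infty$, but this only covers $i_l=1$. For $i_l\ge2$ one has $D^{i_l}\psi\in H^{m-i_l}$, which need not embed in $L^\infty$ (e.g.\ $d=2$, $m=3$, $i_l=2$ gives $H^1\not\hookrightarrow L^\infty$). These terms require the genuine Gagliardo--Nirenberg/Moser allocation you allude to parenthetically --- placing $D^jf\circ\phi$ in $L^{p}$ and the $\psi$-factors in complementary $L^{q}$'s with $\tfrac1p+\sum\tfrac1{q_l}=\tfrac12$, using the Sobolev embeddings $H^{s-j}\hookrightarrow L^p$ and $H^{m-i_l}\hookrightarrow L^{q_l}$. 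That does work (it is essentially the argument behind the cited \cite[Lemma 2.7]{InKaTo13}), but it is exactly the combinatorics the paper's induction sidesteps: the induction packages all of this into a single invocation of the $H^{m-1}\cdot H^{s-1}\to H^{s-1}$ product estimate per step.
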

\begin{proof}
First note that by Sobolev embedding we have $\|\phi-\id\|_{C^1}\leq c\|\phi-\id\|_{H^m}$ for some constant $c>0$ (which of course depends on the chosen inner product on $H^m(\T^d;\R^d)$).
Thus, in particular $\|D\phi-\onesymbol\|_{C^0}\leq c\varepsilon$ for all $\phi\in\id+\overline{B_{\varepsilon}(0)}$,
and $\varepsilon$ can indeed be chosen small enough such that $\det D\phi>0$ everywhere and therefore $\id+\overline{B_{\varepsilon}(0)}\subset\DiffSobIdLift^m$.

The proof is by induction in $s$.
Without loss of generality we consider the standard norm $\|f\|_{H^s}^2=\sum_{j=0}^s\int_{\T^d}|D^jf|^2\,\d x$.
For $s=0$ we have
\begin{multline*}
\|f\circ\phi\|_{H^0}^2
=\int_{\T^d}|f\circ\phi|^2\,\d x
=\int_{\T^d}|f|^2\det D(\phi^{-1})\,\d x\\
\leq\min_{x\in\T^d}\frac1{\det D\phi(x)}\|f\|_{H^0}^2
\leq[(1+cL\|\phi-\id\|_{H^m})\|f\|_{H^0}]^2,
\end{multline*}
where $L$ is the Lipschitz constant of the smooth map $A\mapsto\sqrt{1/\det A}$ on the closed ball of radius $c\varepsilon$ around the identity matrix $\onesymbol$.
Now let $s>0$.
We have $D(f\circ\phi)=Df\circ\phi\, D\phi$.
By the induction hypothesis $\|Df\circ\phi\|_{H^{s-1}}\leq(1+\kappa\|\phi-\id\|_{H^m})\|Df\|_{H^{s-1}}$ for some $\kappa>0$.
Furthermore,
\begin{multline*}
\|Df\circ\phi\, D\phi\|_{H^{s-1}}
\leq\|Df\circ\phi\|_{H^{s-1}}+\|Df\circ\phi(D\phi-\onesymbol)\|_{H^{s-1}}\\
\leq\|Df\circ\phi\|_{H^{s-1}}(1+\hat K\|D\phi-\onesymbol\|_{H^{m-1}})
\leq\|Df\circ\phi\|_{H^{s-1}}(1+K\|\phi-\id\|_{H^m})
\end{multline*}
for some $\hat K,K>0$ by \cite[Lemma 2.3]{InKaTo13} or \cref{thm:BeHo21} (essentially since $H^{m-1}(\T^d)$ is a Banach algebra).
In summary,
\begin{align*}
&\,\quad\|f\circ\phi\|_{H^s}^2\\
&=\|f\circ\phi\|_{H^0}^2+\|D(f\circ\phi)\|_{H^{s-1}}^2\\
&\leq(1+cL\|\phi-\id\|_{H^m})^2\|f\|_{H^0}^2+(1+K\|\phi-\id\|_{H^m})^2\|Df\circ\phi\|_{H^{s-1}}^2\\
&\leq(\!1\!+\!cL\|\phi\!-\!\id\|_{H^m}\!)^2\|f\|_{H^0}^2\!+\!(\!1\!+\!K\|\phi\!-\!\id\|_{H^m}\!)^2(\!1\!+\!\kappa\|\phi\!-\!\id\|_{H^m}\!)^2\|Df\|_{H^{s\!-\!1}}^2\\
&\leq(1+C\|\phi-\id\|_{H^m})^2\|f\|_{H^s}^2
\end{align*}
for some $C>0$ (depending on $c,L,K,\kappa$ and $\varepsilon$).
\end{proof}

Based on \cref{thm:Regulcomposition}, we next estimate the $H^m$-norm of the flow, 
generalize \cref{thm:Regulcomposition} to metric balls, and similarly estimate the adjoint map associated with the flow.
The proof mainly is a variation of \cite[Lemma 3.5]{BrVi17} to obtain the explicit dependence on the metric distance to the identity
and to estimate in addition the adjoint map (since it will carry regularity information along geodesics).
Given the structure of the flow \eqref{eqn:flow}, it is not surprising that the estimates depend exponentially on the metric distance to the identity.

\begin{proposition}[Norm and composition estimates for the flow]\label{thm:normEstDeform}
Let $m>\frac{d}{2}+1$, $0\leq s\leq m$, and $I=[0,1]$.
There exists a constant $C>0$ such that for all $v\in L^1(I;H^m(\T^d;\R^d))$ with flow $\phi:I\to\DiffSobId^m$, $\phi_0=\id$, it holds
\begin{enumerate}
\item
$\|\phi_t-\id\|_{H^m}\leq\exp(C\|v\|_{L^1(I;H^m)})-1$ for all $t\in I$,
\item\label{enm:composition}
$\|f\circ\phi_t\|_{H^s}\leq\exp(C\|v\|_{L^1(I;H^m)})\|f\|_{H^s}$ for all $t\in I$, $f\in H^s(\T^d)$,
\item\label{enm:adjointMapEstimate}
if $s\leq m-1$, then $\|\adjointMap_{\phi_t}w\|_{H^s}\leq\exp(C\|v\|_{L^1(I;H^m)})\|w\|_{H^s}$ for all $t\in I$, $w\in H^s(\T^d;\R^d)$.
\end{enumerate}
\end{proposition}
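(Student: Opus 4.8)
The plan is to obtain all three estimates from the small-deformation bound of \cref{thm:Regulcomposition} by a chaining (``composition telescoping'') argument over a sufficiently fine time partition, exploiting the semigroup structure of the flow. The structural fact I would use is that if $\phi$ is the flow of $v$ on $I$ with $\phi_0=\id$ and $0=t_0<t_1<\dots<t_N=1$ is a partition of $I$, then for $t\in[t_j,t_{j+1}]$ one has $\phi_t=\chi^{(j)}_t\circ\chi^{(j-1)}_{t_j}\circ\dots\circ\chi^{(0)}_{t_1}$, where $\chi^{(k)}$ denotes the flow of $v$ restricted to $[t_k,t_{k+1}]$ with $\chi^{(k)}_{t_k}=\id$; this follows from uniqueness of the flow, since $\phi_t\circ\phi_{t_k}^{-1}$ solves the flow equation on $[t_k,\cdot]$ with velocity $v$ and value $\id$ at $t_k$. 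Since $s\mapsto\int_0^s\|v_r\|_{H^m}\,\d r$ is absolutely continuous, given the radius $\varepsilon$ from \cref{thm:Regulcomposition} I can choose the partition so fine that $a_k:=\int_{t_k}^{t_{k+1}}\|v_s\|_{H^m}\,\d s$ satisfies $2(1+C\varepsilon)a_k\leq\varepsilon$ for every $k$, with $C$ the constant of \cref{thm:Regulcomposition}.

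First I would prove \eqref{enm:composition}. A bootstrap argument on the integral form $\chi^{(k)}_t=\id+\int_{t_k}^{t}v_s\circ\chi^{(k)}_s\,\d s$ shows $\|\chi^{(k)}_t-\id\|_{H^m}\leq(1+C\varepsilon)a_k=:b_k$ for all $t\in[t_k,t_{k+1}]$: as long as $\|\chi^{(k)}_s-\id\|_{H^m}\leq\varepsilon$, \cref{thm:Regulcomposition} gives $\|v_s\circ\chi^{(k)}_s\|_{H^m}\leq(1+C\varepsilon)\|v_s\|_{H^m}$, integration yields the a priori assumed bound back, improved to $(1+C\varepsilon)a_k\leq\varepsilon/2$, and continuity of $t\mapsto\chi^{(k)}_t$ in $H^m$ closes the bootstrap on all of $[t_k,t_{k+1}]$. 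In particular each $\chi^{(k)}_t$ lies in $\id+\overline{B_\varepsilon(0)}$ (after passing to the minimal-norm lift in $\DiffSobIdLift^m$), so applying \cref{thm:Regulcomposition} once per factor of $\phi_t$ gives $\|f\circ\phi_t\|_{H^s}\leq\prod_{k=0}^{N-1}(1+Cb_k)\|f\|_{H^s}\leq\exp\!\bigl(C\textstyle\sum_k b_k\bigr)\|f\|_{H^s}=\exp\!\bigl(C(1+C\varepsilon)\|v\|_{L^1(I;H^m)}\bigr)\|f\|_{H^s}$, which is \eqref{enm:composition} after renaming $C$; crucially this bound is uniform in $t$ and in the number of partition points.

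Then \eqref{enm:composition} feeds into the remaining two. For the first item, the integral form $\phi_t=\id+\int_0^t v_s\circ\phi_s\,\d s$ together with \eqref{enm:composition} gives $\|\phi_t-\id\|_{H^m}\leq\exp(C\|v\|_{L^1(I;H^m)})\,\|v\|_{L^1(I;H^m)}$, and the elementary inequality $xe^{Cx}\leq e^{(C+1)x}-1$ for $x\geq0$ (since $e^x-e^{-Cx}\geq e^x-1\geq x$) turns this into the stated form. For \eqref{enm:adjointMapEstimate} I would first note that for fixed $t$ the path $s\mapsto\phi_{t-s}\circ\phi_t^{-1}$, $s\in[0,t]$, is the flow of the velocity $s\mapsto-v_{t-s}$ (same $L^1(I;H^m)$-norm) and equals $\phi_t^{-1}$ at $s=t$, so \eqref{enm:composition} applied to this flow yields $\|g\circ\phi_t^{-1}\|_{H^s}\leq\exp(C\|v\|_{L^1(I;H^m)})\|g\|_{H^s}$. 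Writing $\adjointMap_{\phi_t}w=(D\phi_t\,w)\circ\phi_t^{-1}$ and splitting $D\phi_t\,w=w+(D\phi_t-\onesymbol)w$, the module estimate of $H^{m-1}$ on $H^s$ for $s\leq m-1$ (\cite[Lemma 2.3]{InKaTo13}) together with the first item gives $\|D\phi_t\,w\|_{H^s}\leq(1+C\|\phi_t-\id\|_{H^m})\|w\|_{H^s}\leq\bigl(1+C(e^{C\|v\|_{L^1(I;H^m)}}-1)\bigr)\|w\|_{H^s}$; combining this with the composition bound and the routine elementary estimate $(1+C(e^x-1))e^x\leq e^{(C+1)x}$ for $x\geq0$ (valid once $C\geq1$) yields \eqref{enm:adjointMapEstimate}.

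The step I expect to require the most care is the bootstrap in \eqref{enm:composition}: one must verify that the chosen partition really keeps each $\chi^{(k)}_t$ inside the small ball $\id+\overline{B_\varepsilon(0)}$ on which \cref{thm:Regulcomposition} is available, which involves a small subtlety in passing between a torus diffeomorphism and its minimal-norm lift in $\DiffSobIdLift^m$ and in justifying the (absolute) continuity in $H^m$ of flows of merely $L^1$ velocity fields, and one must be sure that the telescoped product $\prod_k(1+Cb_k)\leq\exp(C\sum_k b_k)$ is independent of $N$, so that the final constant depends only on $m$, $d$, $\varepsilon$ and the fixed norms, not on $v$. The remaining work is bookkeeping with the two elementary exponential inequalities.
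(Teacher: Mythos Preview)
Your proposal is correct and follows essentially the same route as the paper: both arguments partition the time interval so that each incremental flow stays in the small ball where \cref{thm:Regulcomposition} applies, then telescope the resulting factors. Your organization is slightly more streamlined---you prove \eqref{enm:composition} first and deduce the other two items from it (via the integral form of the flow for item~1, and via the reversed-time flow for $\phi_t^{-1}$ plus the module estimate for \eqref{enm:adjointMapEstimate}), whereas the paper runs a separate piecewise induction for each item; but the underlying ideas, including the observation that $\phi_t^{-1}$ is itself a flow of a time-reversed velocity, are the same.
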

\begin{proof}
First note that 
we may consider the flow $\phi_t$ in $\DiffSobIdLift^m$ instead of $\DiffSobId^m$ since (with our notation conventions from the previous section) this does not change any of the estimates.

Next consider $v$ with $(1+C\varepsilon)\|v\|_{L^1(I;H^m)}<\varepsilon$ for $\varepsilon$ and $C$ from \cref{thm:Regulcomposition}.
We show $\|\phi_t-\id\|_{H^m}\leq\varepsilon$ for all $t\in I$.
Indeed, let $T\in I$ be the smallest time such that either $\|\phi_T-\id\|_{H^m}=\varepsilon$ or $T=1$, then for $t<T$ we have
\begin{multline}\label{eqn:distFromId}
\|\phi_t-\id\|_{H^m}
\leq\int_0^t\|v_r\circ\phi_r\|_{H^m}\d r\\
\leq(1+C\varepsilon)\int_I\|v_r\|_{H^m}\d r
=(1+C\varepsilon)\|v\|_{L^1(I;H^m)}<\varepsilon.
\end{multline}
The last inequality remains strict even after taking the liminf as $t\to T$,
while $\liminf_{t\to T}\|\phi_t-\id\|_{H^m}$ can be bounded below by $\|\phi_T-\id\|_{H^m}$
(due to the weak lower semi-continuity of the norm and the uniform convergence $\phi_t\to\phi_T$, which implies $\phi_t-\id\to\phi_T-\id$ weakly in $H^m$ due to $\phi_T-\id\in H^m$).
Thus we must have $T=1$ and therefore $\|\phi_t-\id\|_{H^m}\leq\varepsilon$ for $t\leq1$.

From now on, $v$ is an arbitrary velocity field.
We split it into $N$ time segments $v^j=v|_{[t_{j-1},t_{j}]}$ (extended to $I$ by zero)
with $0=t_0<\ldots<t_N=1$ chosen such that $\int_{t_{j-1}}^{t_{j}}\|v_t\|_{H^m}\d t=\|v\|_{L^1(I;H^m)}/N<\frac\varepsilon{1+C\varepsilon}$.
Let us denote by $\phi^j$ the flow of $v^j$, then by the above we have $\|\phi^j_t-\id\|_{H^m}\leq\varepsilon$, and furthermore
\begin{equation*}
\phi_t=\phi^N_t\circ\cdots\circ\phi^2_t\circ\phi^1_t.
\end{equation*}

\begin{enumerate}
\item
Abbreviate $V_N\!=\!(1\!+\!C\varepsilon)\|v\|_{L^1(I;H^m)}/N$.
From \eqref{eqn:distFromId} we know $\|\phi^j_t-\id\|_{H^m}\leq V_N$ for all $j$.
By induction in $n$ we have $\|\phi^{j+n-1}_t\circ\cdots\circ\phi^j_t-\id\|_{H^m}\leq(1+CV_N)^n-1$ for all $j$ (assuming without loss of generality $C\geq1$):
Indeed, the case $n=1$ is trivial, and for $n>1$, using \cref{thm:Regulcomposition} we have
\begin{align*}
&\|\phi^{j+n-1}_t\circ\cdots\circ\phi^j_t-\id\|_{H^m}\\
\leq&\|(\phi^{j+n-1}_t\circ\cdots\circ\phi^{j+1}_t-\id)\circ\phi^j_t\|_{H^m}+\|\phi^j_t-\id\|_{H^m}\\
\leq&(1+CV_N)\|\phi^{j+n-1}_t\circ\cdots\circ\phi^{j+1}_t-\id\|_{H^m}+\|\phi^j_t-\id\|_{H^m}\\
\leq&(1+CV_N)[(1+CV_N)^{n-1}-1]+V_N
\leq(1+CV_N)^n-1.
\end{align*}
Thus, $\|\phi_t-\id\|_{H^m}\leq(1+CV_N)^N\to_{N\to\infty}\exp(C(1+C\varepsilon)\|v\|_{L^1(I;H^m)})$.
\item
From $\|f\circ\phi_t^N\circ\cdots\circ\phi_t^1\|_{H^s}=\|(\cdots(f\circ\phi_t^N)\circ\cdots\circ\phi_t^2)\circ\phi_t^1\|_{H^s}$ it follows
by inductively applying \cref{thm:Regulcomposition} that
$\|f\circ\phi_t\|_{H^s}\leq\|f\|_{H^s}\prod_{j=1}^N(1+C\|\phi_t^j-\id\|_{H^m})\leq\|f\|_{H^s}(1+CV_N)^N\to_{N\to\infty}\|f\|_{H^s}\exp(C(1+C\varepsilon)\|v\|_{L^1(I;H^m)})$.
\item
We will show $\|\adjointMap_{\phi_t^j}w\|_{H^s}\leq(1+\kappa V_N)\|w\|_{H^s}$ for all $j$ and some constant $\kappa>0$,
from which the result will follow exploiting $\adjointMap_{\phi_t}w=\adjointMap_{\phi_t^N}\cdots\adjointMap_{\phi_t^1}w$ and letting $N\to\infty$ as before. Indeed,
\begin{align*}
\|\adjointMap_{\phi_t^j}w\|_{H^s}
&\leq(1+C\|(\phi_t^j)^{-1}-\id\|_{H^m})\|D\phi_t^jw\|_{H^s}\\
&=(1+C\|(\phi_t^j)^{-1}-\id\|_{H^m})\|(D\phi_t^j-\onesymbol)w+w\|_{H^s}\\
&\leq(1+C\|(\phi_t^j)^{-1}-\id\|_{H^m})(c\|D\phi_t^j-\onesymbol\|_{H^m}+1)\|w\|_{H^s},
\end{align*}
using \cref{thm:BeHo21} with constant $c>0$ in the last inequality and \cref{thm:Regulcomposition} in the first.
The latter is allowed since $(\phi_t^j)^{-1}$ is the flow of $[0,t]\ni r\mapsto-v^j(t-r)$ \cite{Tr95a} so that by \eqref{eqn:distFromId} we also have $\|(\phi_t^j)^{-1}-\id\|_{H^m}\leq V_N<\varepsilon$.
Consequently, with some $\tilde c>0$ we have $\|\adjointMap_{\phi_t^j}w\|_{H^s}\leq(1+CV_N)(1+\tilde cV_N)\|w\|_{H^s}$, as desired.
\qedhere
\end{enumerate}
\end{proof}

We will also need certain continuity properties of composition and adjoint map
(note that continuity of $H^s(\T^d;\R^d)\times\DiffSobId^m\ni(w,\phi)\mapsto(w\circ\phi,\adjointMap_\phi w)\in H^s(\T^d;\R^d)\times H^s(\T^d;\R^d)$ for $m>\frac d2+1$ and $0\leq s<m$ is known, see \cref{thm:InKaTo13}).

\begin{lemma}[Continuity of $\adjointMap^*$]\label{thm:continuityAdjointAdjointMap}
Let $m>\frac d2+1$, $0\leq s<m$, and $\phi_n\to\phi$ in $\DiffSobId^m$ as $n\to\infty$.
Then $w_n\to w$ weakly in $H^s(\T^d;\R^d)$ implies
$w_n\circ\phi_n\to w\circ\phi$ and $\adjointMap_{\phi_n}w_n\to\adjointMap_\phi w$ both weakly in $H^s(\T^d;\R^d)$.
As a consequence, the map $(\rho,\phi)\mapsto\adjointMap_\phi^*\rho$ is continuous from $H^{-s}(\T^d;\R^d)\times\DiffSobId^m$ into $H^{-s}(\T^d;\R^d)$.
\end{lemma}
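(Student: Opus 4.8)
The plan is to prove the two weak convergence statements first and then derive the continuity of $(\rho,\phi)\mapsto\adjointMap_\phi^*\rho$ by duality. For the first part, I would fix test functions and exploit that weak convergence $w_n\to w$ in $H^s$ together with the known \emph{norm-continuity} of composition and adjoint map on the slightly larger index $s'$ with $s<s'<m$ (if $w_n$ were uniformly bounded in $H^{s'}$) does the job; but since $w_n$ is only bounded in $H^s$, the right approach is a direct argument. Write $\adjointMap_{\phi_n}w_n = (D\phi_n\, w_n)\circ\phi_n^{-1}$. The sequence $(w_n)$ is bounded in $H^s$ (weak convergence implies boundedness), so by \cref{thm:Regulcomposition}-type estimates (more precisely the composition estimate, valid for $0\le s\le m$, applied locally along a chart path from $\id$ to $\phi$, or simply \eqref{eqn:regularityComposition}) the sequences $w_n\circ\phi_n$ and $\adjointMap_{\phi_n}w_n$ are bounded in $H^s(\T^d;\R^d)$. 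Hence they have weakly convergent subsequences; it remains to identify the limits. For that, test against a dense set: for $\eta\in C^\infty(\T^d;\R^d)$,
\begin{equation*}
\langle w_n\circ\phi_n,\eta\rangle = \int_{\T^d} (w_n\circ\phi_n)\cdot\eta\,\d x = \int_{\T^d} w_n\cdot\big[(\eta\circ\phi_n^{-1})\,\det D\phi_n^{-1}\big]\,\d x.
\end{equation*}
Since $\phi_n\to\phi$ in $H^m\hookrightarrow C^1$, one has $\phi_n^{-1}\to\phi^{-1}$ in $C^1$ (inversion is continuous on $\Diff^1$), so the bracketed function converges \emph{strongly} in $L^2$ (indeed in any $H^{s}$ with $s<m$, using again the composition estimates for the \emph{smooth} function $\eta$). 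Pairing a weakly convergent sequence in $H^s$ with a strongly convergent sequence in $H^{-s}$ gives convergence of the pairing, so $\langle w_n\circ\phi_n,\eta\rangle\to\langle w\circ\phi,\eta\rangle$, identifying the weak limit. The same computation, with $\eta$ replaced by the pull-back/Jacobian combination coming from $\adjointMap_{\phi_n}w_n=(D\phi_n\,w_n)\circ\phi_n^{-1}$, namely
\begin{equation*}
\langle \adjointMap_{\phi_n}w_n,\eta\rangle = \int_{\T^d} w_n\cdot\Big[(D\phi_n)^T\big((\eta\circ\phi_n^{-1})\det D\phi_n^{-1}\big)\circ\phi_n\Big]\,\d x,
\end{equation*}
handles the adjoint map: the bracketed factor again converges strongly in $L^2$ because it is built from $\phi_n$, $\phi_n^{-1}$ and the smooth $\eta$ by operations continuous in $C^1$ and by composition estimates for smooth functions. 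Since every subsequence has a further subsequence converging weakly to the same limit, the full sequences converge.

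For the consequence, fix $(\rho_n,\phi_n)\to(\rho,\phi)$ in $H^{-s}(\T^d;\R^d)\times\DiffSobId^m$; I want $\adjointMap_{\phi_n}^*\rho_n\to\adjointMap_\phi^*\rho$ in $H^{-s}$. By definition of the adjoint, for any $w\in H^s(\T^d;\R^d)$,
\begin{equation*}
\langle \adjointMap_{\phi_n}^*\rho_n - \adjointMap_\phi^*\rho,\, w\rangle = \langle\rho_n,\adjointMap_{\phi_n}w\rangle - \langle\rho,\adjointMap_\phi w\rangle = \langle\rho_n-\rho,\adjointMap_{\phi_n}w\rangle + \langle\rho,\adjointMap_{\phi_n}w-\adjointMap_\phi w\rangle.
\end{equation*}
The first term is bounded by $\|\rho_n-\rho\|_{H^{-s}}\,\|\adjointMap_{\phi_n}w\|_{H^s}\lesssim\|\rho_n-\rho\|_{H^{-s}}\,\|w\|_{H^s}$ using the uniform composition bound from \cref{thm:normEstDeform}\eqref{enm:adjointMapEstimate} (or \eqref{eqn:regularityComposition}) for the sequence $\phi_n$, which is eventually confined to a metric neighbourhood of $\phi$; this is $o(1)$ uniformly over the unit ball of $H^s$. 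For the second term, one uses the already established \emph{norm}-continuity $\adjointMap_{\phi_n}w\to\adjointMap_\phi w$ in $H^s$ (the known result quoted just before the lemma, valid for $0\le s<m$, applied to the fixed $w$) together with a standard equicontinuity/Arzelà argument: since $\{\adjointMap_{\phi_n}-\adjointMap_\phi\}$ is uniformly bounded $H^s\to H^s$ and converges to $0$ on each fixed $w$, and since $H^s$ is separable, the convergence is uniform on compact sets; restricting to the unit ball, this is exactly the operator-norm-type estimate needed. Taking the supremum over $\|w\|_{H^s}\le1$ yields $\|\adjointMap_{\phi_n}^*\rho_n-\adjointMap_\phi^*\rho\|_{H^{-s}}\to0$.

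The main obstacle is the first part, specifically the mismatch between the weak-only bound on $(w_n)$ in $H^s$ and the fact that composition estimates are naturally stated for strong norms: I cannot simply invoke continuity of $(w,\phi)\mapsto w\circ\phi$. The resolution is to keep the weakly convergent factor $w_n$ untouched and move all $\phi_n$-dependence onto a smooth test function, where composition and inversion behave continuously in $C^1$; the change of variables producing the Jacobian $\det D\phi_n^{-1}$ is the one delicate point, and it is legitimate because $\phi_n\in\DiffSobId^m\subset\Diff^1$ so the substitution is classical and $\det D\phi_n^{-1}\to\det D\phi^{-1}$ in $C^0$. A secondary technical point is that in the composition estimates I need $s'$ possibly up to $m$ for $D\phi_n$ (which sits in $H^{m-1}$), so the adjoint-map computation should be done with the factor $D\phi_n$ paired against something in $H^{-(m-1)}$ via the Banach-algebra property of $H^{m-1}$, exactly as in the proof of \cref{thm:Regulcomposition}; but all of this is routine once the weak/strong pairing structure above is in place.
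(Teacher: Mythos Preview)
Your treatment of the two weak convergence statements is essentially the same as the paper's: bound $w_n\circ\phi_n$ and $\adjointMap_{\phi_n}w_n$ in $H^s$ via the composition estimates, extract weakly convergent subsequences, and identify the limit by moving all $\phi_n$-dependence onto a smooth test function through the change of variables. The paper organizes the adjoint-map step slightly differently (it first argues $D\phi_n\,w_n\rightharpoonup D\phi\,w$ as the product of a strongly and a weakly convergent sequence, then composes with $\phi_n^{-1}$), but the substance is the same.

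The gap is in your derivation of the continuity of $(\rho,\phi)\mapsto\adjointMap_\phi^*\rho$. After the split
\[
\langle\adjointMap_{\phi_n}^*\rho_n-\adjointMap_\phi^*\rho,\,w\rangle
=\langle\rho_n-\rho,\adjointMap_{\phi_n}w\rangle+\langle\rho,\adjointMap_{\phi_n}w-\adjointMap_\phi w\rangle,
\]
the first term is indeed $o(1)$ uniformly over $\|w\|_{H^s}\le1$. For the second term you invoke that pointwise convergence of the uniformly bounded operators $\adjointMap_{\phi_n}-\adjointMap_\phi$ is uniform on compact sets and then ``restrict to the unit ball''. But the closed unit ball of $H^s$ is \emph{not} norm-compact, so this step fails; in general, strong operator convergence $T_n\to T$ does not imply strong convergence of the adjoints $T_n^*\to T^*$, which is exactly what you are trying to get (for the fixed functional $\rho$).

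The remedy is precisely what the paper does and what your first two parts were set up for: argue by contradiction. If $\sup_{\|w\|_{H^s}\le1}|\langle\rho,(\adjointMap_{\phi_n}-\adjointMap_\phi)w\rangle|$ does not tend to zero, choose near-maximizers $w_n$ in the unit ball, pass to a weakly convergent subsequence $w_n\rightharpoonup w$, and apply your weak convergence result \emph{with varying $w_n$} to obtain $\adjointMap_{\phi_n}w_n\rightharpoonup\adjointMap_\phi w$ (and trivially $\adjointMap_\phi w_n\rightharpoonup\adjointMap_\phi w$), so that $\langle\rho,(\adjointMap_{\phi_n}-\adjointMap_\phi)w_n\rangle\to0$, a contradiction. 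In other words, the weak-convergence statement you proved is not merely a warm-up; it is the tool that closes the argument, because the unit ball is only \emph{weakly} compact.
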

\begin{proof}
By \cref{thm:normEstDeform}\eqref{enm:composition} $w_n\circ\phi_n$ is bounded, hence contains a weakly converging subsequence (for simplicity still indexed by $n$).
We show that the weak limit is $w\circ\phi$ independent of the chosen subsequence so that actually the whole sequence converges weakly.
To this end let $\eta$ be smooth or at least $H^m$-regular, then
\begin{equation*}
\langle\eta,w_n\circ\phi_n-w\circ\phi\rangle
=\langle\eta,(w_n-w)\circ\phi_n+w\circ\phi_n-w\circ\phi\rangle
\mathop\to_{n\to\infty}\lim_{n\to\infty}\langle\eta,(w_n-w)\circ\phi_n\rangle
\end{equation*}
due to the continuity of the composition. Next, by the transformation rule we have
\begin{equation*}
\langle\eta,(w_n-w)\circ\phi_n\rangle
=\int_{\T^d}\det D\phi_n^{-1}\,\eta\circ\phi_n^{-1}\cdot(w_n-w)\,\d x
\mathop\to_{n\to\infty}0
\end{equation*}
since $\eta\circ\phi_n^{-1}\to\eta\circ\phi^{-1}$ strongly in $H^m(\T^d;\R^d)$ (by continuity of inversion in $\DiffSobId^m$ and composition with $\DiffSobId^m$ \cite[Lemmas 2.7-2.8]{InKaTo13}),
$\det D\phi_n^{-1}\to\det D\phi^{-1}$ strongly in $H^{m-1}(\T^d)$
(since $\phi\mapsto\phi^{-1}$ is continuous on $\DiffSobId^m$,
$\psi\mapsto D\psi$ is continuous from $\DiffSobId^m$ to $H^{m-1}(\T^d;\R^{d\times d})$,
and taking the pointwise determinant is continuous from $H^{m-1}(\T^d;\R^{d\times d})$ into $H^{m-1}(\T^d)$ since $H^{m-1}$ forms a Banach algebra \cite[Lemma 2.16]{InKaTo13}),
and thus $\det D\phi_n^{-1}\,\eta\circ\phi_n^{-1}\to\det D\phi^{-1}\,\eta\circ\phi^{-1}$ strongly in $H^{m-1}(\T^d;\R^d)$ by \cref{thm:BeHo21}.

Likewise, $D\phi_nw_n$ converges weakly to $D\phi w$ in $H^s(\T^d;\R^d)$ as the product of a strongly (in $H^{m-1}$) and a weakly (in $H^s$) converging sequence
so that $\adjointMap_{\phi_n}w_n$ converges weakly to $\adjointMap_\phi w$ by $\phi_n^{-1}\to\phi^{-1}$ in $\DiffSobId^m$ and the previous.

Finally, suppose $\adjointMap_\phi^*\rho$ were not continuous in $\phi$ and $\rho$.
Then there exist $C>0$ as well as sequences $\phi_n\to\phi$ in $\DiffSobId^m$, $\rho_n\to\rho$ in $H^{-s}(\T^d;\R^d)$ and  $w_n$ in $H^s(\T^d;\R^d)$ with $\|w_n\|_{H^s}\leq 1$ such that
\begin{equation*}
C
<\langle\adjointMap_{\phi_n}^*\rho_n-\adjointMap_{\phi}^*\rho,w_n\rangle
=\langle\rho_n,\adjointMap_{\phi_n}w_n\rangle-\langle\rho,\adjointMap_{\phi}w_n\rangle.
\end{equation*}
However, due to the boundedness of $w_n$ we can extract a subsequence (for simplicity again indexed by $n$) with $w_n\to w$ weakly in $H^s(\T^d;\R^d)$.
Then along this subsequence the right-hand side of the above converges to $\langle\rho,\adjointMap_{\phi}w\rangle-\langle\rho,\adjointMap_{\phi}w\rangle=0$, yielding a contradiction.
\end{proof}

Before proving a regularity estimate for geodesics (which then will allow an approximation via spectral discretization),
we need to ensure well-posedness of geodesics in the first place.
Even though shortest geodesics exist between any two points of $\DiffSobId^m$
\cite[Thm.\,3.7]{Tr95a},\cite[Thm.\,7.2]{BrVi17}\footnote{Note that \cite[Thm.\,7.2]{BrVi17} only states the result for smooth metrics,
but does not use the smoothness in proving existence of path energy minimizers.}
and under certain conditions on the inner product $\IPHm{\cdot}{\cdot}$ even satisfy a (weak) geodesic equation \cite{GuRaRuWi23} (cf.\ \cref{thm:GuRaRuWi23}),
the solution of this geodesic equation forward in time may not be unique.
To this end we appeal to classical differential geometry:
If the Riemannian metric $g_\phi(v,w)=\langle\L(v\circ\phi),w\circ\phi\rangle$ is twice continuously differentiable in $\phi$,
then the right-hand side of the second order geodesic equation is Lipschitz (even differentiable) and thus locally uniquely solvable \cite[\S\,1.6]{Kl95a}
with the solution depending continuously on the initial condition by classical ODE theory.
In other words, the Riemannian exponential map is locally well-defined and continuous (and $n$ times differentiable if the metric has $n$ further derivatives).
For the sake of completeness we provide a sufficient condition for this differentiability requirement, which essentially is a variation of \cite[Thm.\,4.1]{MiPr10}.

\begin{lemma}[Differentiable Sobolev metrics]\label{thm:smoothMetric}
Let $m>\frac d2+1$, $0\leq l\leq m$.
If $\L=\bar\L+B^*B$ with $\bar\L:H^{m-l}(\T^d;\R^d)\to H^{l-m}(\T^d;\R^d)$ bounded linear
and $B=\sum_{k=0}^{m}a_kD^k$ a differential operator of order $m$ with coefficient tensors $a_i$ of $W^{l,\infty}$-regularity,
then the metric \eqref{eqn:metric} is $l$ times continuously differentiable.
\end{lemma}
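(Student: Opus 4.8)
The plan is to show that $\phi\mapsto g_\phi$, viewed in a manifold chart as a map from an open subset of $H^m(\T^d;\R^d)$ into the Banach space of bounded symmetric bilinear forms on $H^m(\T^d;\R^d)$, is $l$ times continuously Fréchet differentiable, and to do so by exploiting the decomposition $\L=\bar\L+B^*B$. First I would use the right-invariance $g_{\phi\circ\psi}(v\circ\psi,w\circ\psi)=g_\phi(v,w)$ from \eqref{eqn:metric} together with the smoothness of right translations $\psi\mapsto\psi\circ\chi$ on $\DiffSobId^m$ to reduce the claim to differentiability in a neighbourhood of $\phi=\id$ (near any other point one precomposes with a fixed right translation). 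Writing $g_\phi(v,w)=\langle\L(v\circ\phi^{-1}),w\circ\phi^{-1}\rangle$, inserting $\L=\bar\L+B^*B$ splits $g_\phi=g_\phi^{\bar\L}+g_\phi^{B}$ with $g_\phi^{\bar\L}(v,w)=\langle\bar\L(v\circ\phi^{-1}),w\circ\phi^{-1}\rangle$ and $g_\phi^{B}(v,w)=\langle B(v\circ\phi^{-1}),B(w\circ\phi^{-1})\rangle$, and I would treat the two summands with complementary tools.

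For $g^{\bar\L}$ the key ingredient is that the composition-with-inverse operator $S_\phi\colon v\mapsto v\circ\phi^{-1}$, which by \cref{thm:Regulcomposition} is bounded on each $H^s(\T^d;\R^d)$ with $0\le s\le m$, depends $C^l$-smoothly on $\phi$ as a map into $L\big(H^m(\T^d;\R^d),H^{m-l}(\T^d;\R^d)\big)$. This refines the continuity of composition and inversion on Sobolev diffeomorphisms \cite{InKaTo13} and would be obtained by iterating the identity $\partial_h(v\circ\phi^{-1})=-D(v\circ\phi^{-1})\,(h\circ\phi^{-1})$: each differentiation in $\phi$ replaces a factor in $H^s$ by the product of a factor in $H^{s-1}$ with one in $H^m$, which by the Sobolev module property again lies in $H^{s-1}$, so after $l$ steps one still lands in $H^{m-l}$, and all occurring factors depend continuously on $\phi$ by continuity of composition, inversion, and pointwise multiplication. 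Since by hypothesis $\bar\L\colon H^{m-l}(\T^d;\R^d)\to H^{l-m}(\T^d;\R^d)=\big(H^{m-l}(\T^d;\R^d)\big)'$ is bounded linear and the duality pairing is a bounded bilinear map, $g_\phi^{\bar\L}=S_\phi^{*}\circ\beta_{\bar\L}\circ S_\phi$ for the fixed bounded bilinear form $\beta_{\bar\L}(x,y)=\langle\bar\L x,y\rangle$ on $H^{m-l}(\T^d;\R^d)$; being the composition of the $C^l$ map $\phi\mapsto S_\phi$ with the bounded quadratic map $S\mapsto S^{*}\beta_{\bar\L}S$, the map $\phi\mapsto g_\phi^{\bar\L}$ is $C^l$.

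For $g^{B}$, since $B$ is a differential operator, I would pull the integral back to the domain of $\phi$ via the change of variables $y=\phi(x)$. The chain rule gives $\big(B(v\circ\phi^{-1})\big)\circ\phi=\widehat B_\phi v$, where $\widehat B_\phi$ is again a differential operator of order $m$ whose coefficients are universal polynomials in the functions $a_k\circ\phi$ ($0\le k\le m$), the entries of $(D\phi)^{-1}$, and the entries of $D^2\phi,\dots,D^m\phi$; here $D^m\phi$ enters only, and linearly, in the coefficient of a first-order derivative of $v$. The decisive gain is that $\widehat B_\phi$ acts on the \emph{fixed} arguments $v,w$, so that the change of variables yields
\[
g^{B}_\phi(v,w)=\int_{\T^d}(\widehat B_\phi v)\cdot(\widehat B_\phi w)\,\det D\phi\,\d x=\sum_{|\alpha|,|\beta|\le m}\int_{\T^d}c_\phi^{\alpha\beta}\,(D^\alpha v)(D^\beta w)\,\d x,
\]
a finite sum in which differentiating in $\phi$ only ever hits the coefficients $c_\phi^{\alpha\beta}$ — finite products of factors $a_k\circ\phi$, entries of $(D\phi)^{-1}$, entries of $D^i\phi$ ($2\le i\le m$), and $\det D\phi$ — never $v$ or $w$. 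I would then check each term separately, using that $\phi\mapsto(D\phi)^{-1}$ and $\phi\mapsto\det D\phi$ are $C^\infty$ into the Banach algebra $H^{m-1}(\T^d)$, that $\phi\mapsto D^i\phi$ is bounded linear into $H^{m-i}(\T^d)$, that $\phi\mapsto a_k\circ\phi$ is $l$ times differentiable with $p$-th derivative $(h_1,\dots,h_p)\mapsto(D^pa_k\circ\phi)[h_1,\dots,h_p]$ (here the $W^{l,\infty}$-regularity enters, since $D^pa_k\in W^{l-p,\infty}$ for $p\le l$), and that products and compositions of such maps with fixed continuous bilinear maps (pointwise multiplication; integration against a fixed $(D^\alpha v)(D^\beta w)$) again give $C^l$ maps, where multiplication of composed functions is controlled by \cite[Thm.\,6.1]{BeHo21}.

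The step I expect to be the main obstacle is precisely this last, term-by-term Sobolev bookkeeping: one must verify that for every term and every order $p\le l$ the coefficient $\partial^p_\phi c_\phi^{\alpha\beta}$ lies in a function space that pairs with $(D^\alpha v)(D^\beta w)\in H^{m-|\alpha|}(\T^d)\cdot H^{m-|\beta|}(\T^d)$ — and, more delicately, that this pairing is \emph{continuous} in $\phi$, not merely bounded. The sensitive cases involve a factor $D^m\phi$ (only $H^0$-regular) or a factor $D^{p}a_k$ with $p=l$ (only $L^\infty$-regular, so that $\phi\mapsto D^la_k\circ\phi$ need not be continuous into $L^\infty$). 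To close them I would use the structural facts already isolated — $D^m\phi$ multiplies only a first-order $v$-derivative and appears linearly, the $l$ factors $a_k\circ\phi$ carry exactly the $l$ spare derivatives of the coefficients, and the embedding $H^m(\T^d)\hookrightarrow C^0(\T^d)$ is compact — to redistribute derivatives (integrating by parts in $x$ where needed) so that every surviving integral is a continuous multilinear functional on the relevant Sobolev spaces, along the lines of the proof of \cite[Thm.\,4.1]{MiPr10}. Since all of this is local on $\T^d$, the argument carries over verbatim to $\R^d$ and to compact manifolds worked in charts.
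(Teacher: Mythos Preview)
Your proposal is correct and follows essentially the same route as the paper's proof: both split $g_\phi$ into the $\bar\L$-part and the $B^*B$-part, handle the former via the $C^l$-dependence of $\phi\mapsto(v\mapsto v\circ\phi^{-1})$ as a map into $L(H^m,H^{m-l})$ (losing one order of regularity per $\phi$-derivative, controlled by the Sobolev multiplication estimates of \cite{BeHo21}), and handle the latter by the change of variables $y=\phi(x)$ that converts $B(v\circ\phi^{-1})\circ\phi$ into a differential operator $\widehat B_\phi$ acting on the fixed argument $v$ with coefficients depending on $\phi$ only through $a_k\circ\phi$, $(D\phi)^{-1}$, and higher derivatives of $\phi$ --- exactly the paper's $\bar D^k$ construction. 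Your reduction to a neighbourhood of $\id$ via right-invariance is cosmetic (the paper simply works in the global chart $\DiffSobIdLift^m$), and your flagged concern about continuity of $\phi\mapsto D^la_k\circ\phi$ at the top order is a point the paper passes over in a single sentence; your proposed remedy via redistribution of derivatives is reasonable but not spelled out in the paper either.
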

\begin{remark}[Example metrics]\label{rem:exampleMetrics}
The result of course applies to the standard example $\L=1+(-\Delta)^m$ with the choice $\bar\L=0$ and $B=(1,\Delta^{m/2})$ (for even $m$) or $(1,\nabla\Delta^{(m-1)/2})$ (for odd $m$),
but one may just as well introduce non-local operators or spatially inhomogeneous weights,
e.g.\ by choosing $\bar\L u(x)=\int_{\T^d}\frac{u(x)-u(y)}{\mathrm{dist}(x,y)^{d+2s}}\omega(x,y)\,\d y$ for some $s\in(0,1)\cap(0,m-l)$ and a bounded weight $\omega(x,y)$
or by choosing $\bar\L u=-\div(\chi_A\nabla u)$ for the characteristic function $\chi_A$ of some measurable $A\subset\T^d$ (as long as $l\leq m-1$).
In particular the latter choice may be of interest in applications, modelling an enhanced energy dissipation (which is one possible physical interpretation of the path energy) in region $A$,
e.g.\ due to a low permeability porous structure.
The resulting metric is not smooth, but only $m-1$ times continuously differentiable (as one can for instance easily verify for the concrete choice $d=1,m=2,A=[0,\frac12]$).
\end{remark}

The conditions of \cref{thm:smoothMetric} are clearly only sufficient:
For instance, adding a Fourier multiplier of order $m$ or even an operator from $\mathrm{OPS}_{1,0}^{m}$ to $B$ does not change the differentiability properties of the metric,
since these induce smooth metrics by \cite{BaEsKo15,BaBrCiEsKo20}.

\begin{proof}
We check differentiability in the global chart $\DiffSobIdLift^m\subset\id+H^m(\T^d;\R^d)$. We have
\begin{multline*}
g_\phi(v,w)
=\langle\bar\L(v\circ\phi^{-1}),w\circ\phi^{-1}\rangle+\int_{\T^d} B(v\circ\phi^{-1})\cdot B(w\circ\phi^{-1})\,\d x\\
=\langle\bar\L(v\circ\phi^{-1}),w\circ\phi^{-1}\rangle+\int_{\T^d} B(v\circ\phi^{-1})\circ\phi\cdot B(w\circ\phi^{-1})\circ\phi\,\det D\phi\,\d x.
\end{multline*}
Now $(w,\phi)\mapsto w\circ\phi^{-1}$ is $l$ times differentiable as a map from $H^m(\T^d;\R^d)\times\DiffSobIdLift^m$ to $H^{m-l}(\T^d;\R^d)$.
Indeed, it is linear in $w$ so that the map and all its derivatives with respect to $\phi$ are $C^\infty$-smooth in $w$ as long as they are continuous in $w$.
Thus it remains to check differentiability with respect to $\phi$.

Using the formulas
\begin{equation*}
\partial_\phi\phi^{-1}(u)=-D\phi^{-1}\,u\circ\phi^{-1},
\qquad
\partial_\phi(v\circ\phi^{-1})(u)=-Dv\circ\phi^{-1}\,D\phi^{-1}\,u\circ\phi^{-1}
\end{equation*}
we first show inductively that $\partial_\phi^l(v\circ\phi^{-1})(u_1,\ldots,u_l)$ is a sum of products of (potentially multiple factors) $D\phi^{-1}$
with (potentially zero or higher order) derivatives of $v$ and $u_1,\ldots,u_l$ (all composed with $\phi^{-1}$) and (potentially multiple, first or higher order) derivatives of $D\phi^{-1}$.
Moreover, these derivatives amount to an overall order of $l$ in each product, at most $l-1$ of which can act on $D\phi^{-1}$.
Indeed, this holds for $l=1$, and if it holds for $l$,
then applying another time $\partial_\phi$ in some direction $u_{l+1}$ leads by the product rule again to a sum of terms,
each one resulting from differentiating one factor of the previous products with respect to $\phi$.
If this factor is $D^jw\circ\phi^{-1}$ for $w$ being one of $v,u_1,\ldots,u_l$, then $$\partial_\phi[D^jw\circ\phi^{-1}](u_{l+1})=-D^{j+1}w\circ\phi^{-1}\,D\phi^{-1}\,u_{l+1}\circ\phi^{-1}$$
so that the total order of derivatives of $v,u_1,\ldots,u_{l+1},D\phi^{-1}$ indeed was increased by one (from $l$ to $l+1$).
If the factor is $D^j(D\phi^{-1})$, then $$\partial_\phi[D^j(D\phi^{-1})](u_{l+1})=-D^{j+1}(D\phi^{-1}\,u_{l+1}\circ\phi^{-1}),$$
which by the product rule and the same arguments as above increases the total order of derivatives of $v,u_1,\ldots,u_{l+1},D\phi^{-1}$ again by one
(and the highest occurring order of differentiating $D\phi^{-1}$ at most by one), as desired.

In summary, for $l\geq1$ we have that $\partial_\phi^l(v\circ\phi^{-1})(u_1,\ldots,u_l)$ is a sum of products
with one factor in each of $H^{m-n_1},\ldots,H^{m-n_k}$ (the differentiated $v$ and $u_j$), one factor in each of $H^{m-1-n_{k+1}},\ldots,H^{m-1-n_K}$ (the differentiated $D\phi^{-1}$)
as well as (potentially multiple) factors in $H^m$ (undifferentiated $u_j$) and $H^{m-1}$ (the factors $D\phi^{-1}$),
where $n_1,\ldots,n_K\geq1$ with $n_1+\ldots+n_K=l$ and $n_{k+1},\ldots,n_K\leq l-1$.
Now \cref{thm:BeHo21} implies that as soon as one function has Sobolev regularity larger than $\frac d2$, the product of two functions has the minimum Sobolev regularity of the two.
So assume first $m-n_1,\ldots,m-n_k,m-1-n_{k+1},\ldots,m-1-n_K>\frac d2$, then the product has Sobolev regularity $\min\{m-n_1,\ldots,m-n_k,m-1-n_{k+1},\ldots,m-1-n_K\}\geq m-l$.
Otherwise, if at least one factor has Sobolev regularity no larger than $\frac d2$, all factors with Sobolev regularity larger than $\frac d2$ can be ignored for the regularity of the product.
So assume without loss of generality that $m-n_1,\ldots,m-n_k,m-1-n_{k+1},\ldots,m-1-n_K\leq\frac d2$ (if this only holds for a subset, then reindex and redefine $k,K$ correspondingly).
Furthermore, we may assume $k=1$ without loss of generality (since for fixed $n_1,\ldots,n_K$ this represents the case of overall lowest Sobolev regularity).
Then, by iteratively applying \cref{thm:BeHo21} (starting with the most regular function and multiplying one after another the functions in the order of decreasing Sobolev regularity)
we obtain that the product has Sobolev regularity $m-n_1-\ldots-n_K+(K-1)(m-1-\frac d2)-\epsilon$ for any $\epsilon>0$ (note that \cref{thm:BeHo21} also holds for fractional exponents).
Due to $m>1+\frac d2$ and $n_1+\ldots+n_K\leq l$ this implies that the product has Sobolev regularity $m-l$.
Hence in total, $\partial_\phi^l(v\circ\phi^{-1})(u_1,\ldots,u_l)\in H^{m-l}$, where the dependence on $\phi,v,u_1,\ldots,u_l\in H^m$ is continuous;
consequently,
the $l$th derivative of $v\circ\phi^{-1}$ in $\phi$ is a bounded $l$-linear map from $H^m(\T^d;\R^d)^l$ into $H^{m-l}(\T^d;\R^d)$, which depends continuously on $\phi\in\DiffSobIdLift^m$.

Since the dual pairing is smooth (even bilinear), the map $(v,w,\phi)\mapsto\langle\bar\L(v\circ\phi^{-1}),w\circ\phi^{-1}\rangle$ is $l$ times differentiable
from $H^m(\T^d;\R^d)\times H^m(\T^d;\R^d)\times\DiffSobIdLift^m$ into the reals.

Further, the map $\phi\mapsto\det D\phi$ is smooth from $H^m(\T^d;\R^d)$ into $C^0(\T^d)$ as the composition of a $d$-degree polynomial with a linear operator.
Now, abbreviate $\bar Dw=D(w\circ\phi^{-1})\circ\phi$ (we drop the dependence on $\phi$ in the notation), then $D^k(w\circ\phi^{-1})\circ\phi=\bar D^kw$.
Since
\begin{equation*}
\bar Dw
=[Dw\circ\phi^{-1}\,D\phi^{-1}]\circ\phi
=Dw(D\phi)^{-1},
\end{equation*}
the map $(w,\phi)\mapsto\bar Dw$ is infinitely smooth from $H^s(\T^d;\R^d)\times\DiffSobIdLift^m$ into $H^{s-1}(\T^d)^{d\times d}$
(even though $\phi\mapsto\phi^{-1}$ is not smooth in $\DiffSobId^m$,
since matrix inversion and pointwise multiplication is).
By iteration, $(w,\phi)\mapsto\bar D^mw$ is smooth from $H^m(\T^d;\R^d)\times\DiffSobIdLift^m$ into $L^2(\T^d)^{d\times\cdots\times d}$.
Finally, $\phi\mapsto a_i\circ\phi$ is $l$-times differentiable from $\DiffSobIdLift^m$ into tensor fields of $L^\infty$-regularity
so that $(u,\phi)\mapsto B(u\circ\phi^{-1})\circ\phi$ is $l$ times differentiable from $H^m(\T^d;\R^d)\times\DiffSobIdLift^m$ into $L^2(\T^d)$-tensors.
Since pointwise product and integration are smooth as long as they are bounded, $(u,v,\phi)\mapsto g_\phi(u,v)$ is $l$ times differentiable.
\end{proof}

For our convergence result of the spectral space discretization, we will however require $\L$ to be a Fourier multiplier, so in that case the $a_i$ need to be constant in the previous result.

For reference, we also recall the result from \cite{GuRaRuWi23} on the weak PDE solved by geodesics
(there formulated for a domain $D\subset\R^d$), which is based on a similar decomposition of $\L$.
\begin{lemma}[{Weak geodesic PDE, \cite[Thm.\,1]{GuRaRuWi23}}]\label{thm:GuRaRuWi23}
Let $m>1+\frac d2$ and the Riemannian metric \eqref{eqn:metric} on $\DiffSob^m$ be induced by $\L=B^*B+\bar\L$ with $\bar\L:H^m(\T^d;\R^d)\to H^{1-m}(\T^d;\R^d)$ bounded linear
and $B=\sum_{k=0}^{m}a_kD^k$ a differential operator of order $m$ with coefficient tensors $a_i$ of $W^{0,\infty}$-regularity.
Then geodesics in $\DiffSob^m$ with velocity $v$ and momentum $\rho$ satisfy the weak PDE
\begin{equation*}
0=\int_0^1\!\!-2\langle\bar\L v_t, Dv_t\eta_t\rangle + 2\langle\rho_t,\dot\eta_t+D\eta_tv_t\rangle+\int_{\T^d}|Bv_t|^2\div\eta_t+2Bv_t\cdot{\mathcal U}(v_t,\eta_t)\,\d x\,\d t
\end{equation*}
for all smooth $\eta:[0,1]\times\T^d\to\R^d$, where $${\mathcal U}(v_t,\eta_t)=\frac\d{\d\epsilon}\left[B(v_t\circ(\id+\epsilon\eta_t)^{-1})\circ(\id+\epsilon\eta_t)\right]_{\epsilon=0}\in H^0((0,1)\times\T^d;\R^d).$$
Its strong form is \eqref{eqn:EPDiff}.
\end{lemma}
In fact, the proof does not require the specific form of $B$, but only uses differentiability of $H^m\times\DiffSob^m\ni(v,\phi)\mapsto[B(v\circ\phi^{-1})]\circ\phi\in L^2$.

We finally prove a new regularity result showing that along a geodesic, surplus Sobolev regularity of the initial velocity $v_0$ is preserved.
Moreover, in that case the formal geodesic equation \eqref{eqn:EPDiff} (or equivalently its integrated version) holds rigorously.
This will allow a convergent approximation via a spectral discretization.

\begin{theorem}[Preservation of higher Sobolev regularity]\label{thm:regularityPreservationLDDMM}
Let $m>\frac d2+1$, $1\leq k\leq m$, and the metric \eqref{eqn:metric} satisfy the conditions of \cref{thm:smoothMetric} for $l=2$ (thus be twice differentiable).
Let $\phi_t$ be a geodesic in $\DiffSobId^m$ with $\phi_0=\id$, and let $v_t\in H^m(\T^d;\R^d)$ be its velocity and $\rho_t=\L v_t\in H^{-m}(\T^d;\R^d)$ its momentum.
If $\rho_0\in H^{-m+k}(\T^d;\R^d)$ or equivalently $v_0\in H^{m+k}(\T^d;\R^d)$, then
\begin{equation*}
\rho_t=\adjointMap_{\phi_t^{-1}}^*\rho_0\in H^{-m+k}(\T^d;\R^d)
\quad\text{and }v_t\in H^{m+k}(\T^d;\R^d)
\qquad\text{for all }t>0.
\end{equation*}
Moreover, $t\mapsto(\rho_t,v_t)$ is continuous into $H^{-m+k}(\T^d;\R^d)\times H^{m+k}(\T^d;\R^d)$.
\end{theorem}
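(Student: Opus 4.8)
The plan is to establish the key identity $\rho_t = \adjointMap_{\phi_t^{-1}}^* \rho_0$ first and then bootstrap regularity from it. The starting point is the known weak geodesic equation: $\dot\rho_t = -\adjointMap_{v_t}^* \rho_t$ holds in $H^{-m-1}(\T^d;\R^d)$ (from property 8 in \cref{sec:SobolevDiffeomorphisms} together with well-posedness via \cref{thm:smoothMetric} for $l=2$). I would first verify that $t \mapsto \adjointMap_{\phi_t^{-1}}^* \rho_0$ solves the same weak equation: differentiating $\adjointMap_{\phi_t^{-1}}$ in time using $\partial_t(\phi_t^{-1}) = -D\phi_t^{-1}\,v_t\circ\phi_t^{-1}$ (equivalently $\frac{d}{dt}\adjointMap_{\phi_t^{-1}} = -\adjointMap_{\phi_t^{-1}}\adjointDer_{v_t} = \adjointMap_{\phi_t^{-1}}[v_t,\cdot]$) and dualizing gives exactly $\frac{d}{dt}\adjointMap_{\phi_t^{-1}}^*\rho_0 = -\adjointMap_{v_t}^*(\adjointMap_{\phi_t^{-1}}^*\rho_0)$. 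Since both $\rho_t$ and $\adjointMap_{\phi_t^{-1}}^*\rho_0$ agree at $t=0$ and solve the same linear (in $\rho$) ODE with the same $v_t$, a Gronwall/uniqueness argument in $H^{-m-1}$ forces them to coincide for all $t$. Here \cref{thm:continuityAdjointAdjointMap} supplies the continuity of $(\rho,\phi)\mapsto\adjointMap_\phi^*\rho$ needed to make the ODE comparison rigorous.

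With the identity in hand, the regularity claim becomes almost immediate: $\adjointMap_{\phi_t^{-1}} w = (D\phi_t^{-1}\,w)\circ\phi_t$, and by \cref{thm:normEstDeform}\eqref{enm:adjointMapEstimate} applied to the flow $\phi_t^{-1}$ (which is the flow of $r\mapsto -v_{t-r}$, so its $L^1(I;H^m)$-norm is controlled by $\|v\|_{L^1(I;H^m)}$), the map $\adjointMap_{\phi_t^{-1}}: H^s(\T^d;\R^d)\to H^s(\T^d;\R^d)$ is bounded for $0\le s\le m-1$. Hence $\adjointMap_{\phi_t}: H^s\to H^s$ is its bounded inverse, and dualizing, $\adjointMap_{\phi_t^{-1}}^*: H^{-s}\to H^{-s}$ is bounded for $0\le s\le m-1$. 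Applying this with $s = m-k$ (legitimate since $k\ge 1$) gives $\rho_t = \adjointMap_{\phi_t^{-1}}^*\rho_0 \in H^{-m+k}(\T^d;\R^d)$. Then $v_t = \Riesz\rho_t \in H^{m+k}(\T^d;\R^d)$ because, under the \cref{thm:smoothMetric} hypotheses with constant coefficients (or more generally), $\L$ and hence $\Riesz$ shifts Sobolev order by exactly $2m$; this needs a short remark identifying $\L$ with an elliptic order-$2m$ operator so that $\Riesz: H^{-m+k}\to H^{m+k}$.

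For the continuity of $t\mapsto(\rho_t,v_t)$ into $H^{-m+k}\times H^{m+k}$, I would argue that $t\mapsto\phi_t$ is continuous into $\DiffSobId^m$ (it is even $C^1$ as a geodesic), so $t\mapsto\phi_t^{-1}$ is continuous in $\DiffSobId^m$ by continuity of inversion; then one wants continuity of $t\mapsto\adjointMap_{\phi_t^{-1}}^*\rho_0$ in $H^{-m+k}$. Here the subtlety is that \cref{thm:continuityAdjointAdjointMap} only gives continuity of $\adjointMap_\phi^*$ into $H^{-s}$ for $s<m$, i.e.\ strictly short of the endpoint — fine for $s=m-k\le m-1<m$, so it directly applies and yields continuity of $t\mapsto\rho_t$ in $H^{-m+k}$; continuity of $v_t$ then follows by boundedness of $\Riesz$. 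One should double-check that \cref{thm:continuityAdjointAdjointMap} is applicable with $\rho_n = \rho_0$ fixed and $\phi_n = \phi_{t_n}^{-1} \to \phi_t^{-1}$.

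**The main obstacle** I anticipate is the uniqueness step in the ODE comparison: the weak geodesic equation lives in the very weak space $H^{-m-1}$, and to run a Gronwall argument one needs Lipschitz-type control of $\rho\mapsto\adjointMap_{v_t}^*\rho$ there, uniformly integrable in $t$. This requires care because $v_t$ is only $L^1$-in-time with values in $H^m$, and $\adjointMap_{v_t}^*$ acting on $H^{-m-1}$ must be shown bounded by $C\|v_t\|_{H^m}$ (via the explicit formula $\div(\rho\otimes v)+(Dv)^T\rho$ and the Banach-algebra structure of $H^{m-1}$, after suitable pairing). Once this bound is in place, standard Gronwall in $H^{-m-1}$ closes the argument; alternatively, one can sidestep uniqueness entirely by noting that the geodesic $\phi_t$ produced by the exponential map is, by construction, the flow of its velocity $v_t$, and directly verify that the pair $(\phi_t, \Riesz\adjointMap_{\phi_t^{-1}}^*\rho_0)$ satisfies the flow equation and the weak EPDiff equation, invoking local uniqueness of the second-order geodesic ODE from \cref{thm:smoothMetric}.
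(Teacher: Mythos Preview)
Your proposal is correct and follows essentially the same route as the paper: define the candidate $\adjointMap_{\phi_t^{-1}}^*\rho_0$, verify it satisfies the weak momentum equation, invoke uniqueness, and then read off the regularity from \cref{thm:normEstDeform}\eqref{enm:adjointMapEstimate} and the continuity from \cref{thm:continuityAdjointAdjointMap}. The only cosmetic difference is that the paper resolves your ``main obstacle'' by appealing directly to the unique solvability of the second-order geodesic ODE (guaranteed by the $C^2$ metric) rather than by a Gronwall argument in $H^{-m-1}$---precisely the alternative you sketch at the end.
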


Note that for an $m+2$ times differentiable Riemannian metric (and thus $m$ times differentiable Riemannian exponential map)
this regularity preservation result already follows from \cite[main theorem and Sec.\,5.2]{Br17}.
In contrast to our direct computation, there the proof is based on an abstract argument trading in differentiability of the Riemannian exponential for preservation of Sobolev exponents, exploiting the right-invariance of the metric.

\begin{proof}
First note that by \cref{thm:normEstDeform}\eqref{enm:adjointMapEstimate} the adjoint map with respect to any $\psi\in\DiffSobId^m$ represents a bounded isomorphism
\begin{equation*}
\adjointMap_\psi:H^{m-k}(\T^d;\R^d)\to H^{m-k}(\T^d;\R^d).
\end{equation*}
Indeed, $\adjointMap_{\psi}^{-1}=\adjointMap_{\psi^{-1}}$ satisfies the same boundedness properties as $\adjointMap_\psi$ due to $\psi^{-1}\in\DiffSobId^m$.
Next simply define
\begin{equation*}
\rho_t=\adjointMap_{\phi_t^{-1}}^*(\rho_0)\in H^{k-m}(\T^d;\R^d),
\qquad
v_t=\Riesz\rho_t=\L^{-1}\rho_t\in H^{m+k}(\T^d;\R^d),
\end{equation*}
which by the previous is well-defined. For $w$ smooth we then obtain
\begin{multline*}
\langle\dot\rho_t,w\rangle
=\tfrac\d{\d t}\langle\rho_0,\adjointMap_{\phi_t^{-1}}w\rangle
=\langle\rho_0,(D\phi_t)^{\!-1}Dw\circ\phi_t\dot\phi_t-(D\phi_t)^{\!-1}\!D\dot\phi_t(D\phi_t)^{\!-1}w\circ\phi_t\rangle\\
=\langle\adjointMap_{\phi_t}^*\rho_t,\adjointMap_{\phi_t^{-1}}(Dwv_t-Dv_tw)\rangle
=\langle\rho_t,Dwv_t-Dv_tw\rangle,
\end{multline*}
which is still well-defined due to $v_t\in H^{m+k}(\T^d;\R^d)$.
However, this is exactly the weak form of the geodesic ODE \eqref{eqn:EPDiff} which by \cref{thm:GuRaRuWi23} rigorously characterizes the geodesics.
Since the geodesic ODE is uniquely solvable by the regularity of the metric, $\rho_t$ must be the corresponding momentum.
The continuous time dependence finally follows from \cref{thm:continuityAdjointAdjointMap} and the continuity of the geodesic in $\DiffSobId^m$.
\end{proof}

\begin{remark}[Still higher regularity]\label{rem:higherRegularity}
We formulated our result for $k\leq m$ since in that range the momentum $\rho_t$ stays a distribution.
However, the argument extends to $m<k\leq m+\frac d2$
(and the result is known to hold for $k>m+\frac d2$, see later).
Indeed, by \cref{thm:regularityPreservationLDDMM} we already know $\rho_t=\adjointMap_{\phi_t^{-1}}^*\rho_0$,
and it remains to show that $\rho_0\in H^{-m+k}(\T^d;\R^d)$ implies $\rho_t\in H^{-m+k}(\T^d;\R^d)$ for all $t$.
Now let $w:\T^d\to\R^d$ be smooth, then
\begin{align*}
\int_{\T^d}\rho_t\cdot w\,\d x
&=\int_{\T^d}\rho_0\cdot(D\phi_t)^{-1}w\circ\phi_t\,\d x
=\int_{\T^d}\rho_0\circ\phi_t^{-1}\cdot D\phi_t^{-1}w\,\det D\phi_t^{-1}\,\d x\\
&\leq\|w\|_{H^{m-k}}\|\det D\phi_t^{-1}\,(D\phi_t^{-1})^T\rho_0\circ\phi_t^{-1}\|_{H^{k-m}}\\
&\lesssim\|w\|_{H^{m-k}}\|\det D\phi_t^{-1}\|_{H^{m-1}}\|D\phi_t^{-1}\|_{H^{m-1}}\|\rho_0\circ\phi_t^{-1}\|_{H^{k-m}},
\end{align*}
where the last line follows from \cref{thm:BeHo21}, using $m-1>\frac d2\geq k-m$.
Since the determinant is a degree $d$ polynomial and $H^{m-1}$ is a Banach algebra (cf.\ \cref{thm:BeHo21}),
we have $\|\det D\phi^{-1}\|_{H^{m-1}}\lesssim\|D\phi^{-1}\|_{H^{m-1}}^d\lesssim(1+\|\phi^{-1}-\id\|_{H^m})^d$.
Furthermore, $\|\rho_0\circ\phi_t^{-1}\|_{H^{k-m}}\lesssim c(\phi_t^{-1})\|\rho_0\|_{H^{k-m}}$ by \eqref{eqn:regularityComposition}
so that overall
\begin{equation*}
\int_{\T^d}\rho_t\cdot w\,\d x
\leq C(\phi_t^{-1})\|\rho_0\|_{H^{k-m}}\|w\|_{H^{m-k}}
\end{equation*}
with a finite constant $C(\phi_t^{-1})$ depending only on $\phi_t^{-1}\in\DiffSobId^m$.
Thus, $\rho_t\in H^{k-m}(\T^d;\R^d)$ as desired.

For $k>m+\frac d2$ it was already shown in \cite[Thm.\,4.1]{MiPr10} by direct estimates
that the solution to the geodesic ODE \eqref{eqn:EPDiff} stays in $H^{k-m}(\T^d;\R^d)$ if $\rho_0$ is.
\end{remark}

\begin{remark}[Weak metrics]\label{rem:weakMetrics}
\Cref{thm:regularityPreservationLDDMM,rem:higherRegularity} solve a conjecture of \cite{MiPr10}:
The article considers right-invariant Sobolev metrics \eqref{eqn:metric} of order $m$ on $\DiffSobId^{m+k}$ with $k>0$.
Those are known as weak Riemannian metrics since they generate a topology on the tangent spaces $T_\phi\DiffSobId^{m+k}=H^{m+k}(\T^d;\R^d)$
with respect to which the tangent space is not complete.
As a consequence, the resulting Riemannian manifold is not metrically complete,
but nevertheless the geodesic equation and the Riemannian exponential map may still be well-defined.
Indeed, \cite[Thm.\,4.1]{MiPr10} shows that for a smooth metric and $k>m+\frac d2$ the Riemannian exponential $\exp_\phi^{m,k}:T_\phi\DiffSobId^{m+k}\to\DiffSobId^{m+k}$ is a local diffeomorphism\footnote{In fact, the proof of the diffeomorphism property in \cite[Thm.\,4.1]{MiPr10} does not require smoothness of the metric,
but just solvability of the geodesic ODE and differentiable dependence of its solution on the initial data, which holds as soon as the metric is three times differentiable.},
and this is conjectured to be true for any $k\geq0$.
\Cref{thm:regularityPreservationLDDMM,rem:higherRegularity} confirm this conjecture:
Indeed, they show that the restriction of $\exp_\phi^{m,0}$ from $T_\phi\DiffSobId^m$ to $T_\phi\DiffSobId^{m+k}$ has range in $\DiffSobId^{m+k}$
(recall that by \cite{BrVi17} the flow of a $H^{m+k}$-regular velocity lies in $\DiffSobId^{m+k}$),
thus $\exp_\phi^{m,k}$ is nothing else but this restriction and therefore well-defined.
If the metric \eqref{eqn:metric} is three times differentiable, it is even a local diffeomorphism,
which follows from the Inverse Function Theorem (noting that differentiability of the metric on $\DiffSobId^m$ implies differentiability on $\DiffSobId^{m+k}$).
\end{remark}

\begin{remark}[Non-integer exponents]
The results of this section, in particular \cref{thm:regularityPreservationLDDMM}, also hold for non-integer $m$ and $k$ since all employed estimates essentially go back to boundedness of composition with and inversion of Sobolev diffeomorphisms,
which can e.g.\ be found in \cite[Lemma\,B.5-B.6]{InKaTo13} (on $\R^d$, but the extension to bounded domains or manifolds is discussed there, too).
Of course, \cref{thm:smoothMetric} could no longer be applied in \cref{thm:regularityPreservationLDDMM} to guarantee twice differentiability of the metric,
but would have to be replaced by a version in which $B$ is a bounded operator from the \emph{fractional} Sobolev space $H^m$ to $L^2$
such that $(w,\phi)\mapsto[B(w\circ\phi^{-1}]\circ\phi$ is $l$ times differentiable
(which is the only property used in the proof of \cref{thm:smoothMetric}; \cite[Thm.\,1]{GuRaRuWi23}, used in our proof of \cref{thm:regularityPreservationLDDMM}, even just uses differentiability in $\phi$).
By \cite[Thm.\,4.16 \& Thm.\,5.1]{BaBrCiEsKo20} this is for instance satisfied for any $B\in\mathrm{OPS}_{1,0}^{m}$.

Also the precise control of the constants in \cref{thm:Regulcomposition} and the following carry over:
In the proof of \cref{thm:Regulcomposition} one would begin the induction with
\begin{align*}
\|f\circ\phi\|_{H^\theta}
&=\int_{\T^d}\int_{\T^d}\frac{|f(\phi(x))-f(\phi(y))|^2}{|x-y|^{d+2\theta}}\,\d x\,\d y\\
&=\int_{\T^d}\int_{\T^d}\frac{|f(x)-f(y)|^2}{|\phi^{-1}(x)-\phi^{-1}(y)|^{d+2\theta}}\det D(\phi^{-1})(x)\det D(\phi^{-1})(y)\,\d x\,\d y\\
&\leq[(1+C\|\phi-\id\|_{H^m})\|f\|_{H^\theta}]^2
\end{align*}
for $\theta\in(0,1)$ and some $C>0$, exploiting $\det D(\phi^{-1})\leq1+C\|\phi-\id\|_{H^m}$ (cf.\ the proof of \cref{thm:Regulcomposition}) and $|\phi^{-1}(x)-\phi^{-1}(y)|\geq|x-y|/(1+L)$
with $L\lesssim\|\phi-\id\|_{H^m}$ the Lipschitz constant of $\phi-\id$ (as can be readily seen after the change of variables $X=\phi^{-1}(x)$, $Y=\phi^{-1}(y)$).
\end{remark}

\section{Spectral space discretization and its properties}\label{sec:discretization}
In this section we introduce the spectral space discretization of the geodesic equation \eqref{eqn:EPDiff} (a simplified variant of the one from \cite{ZhFl19}) and analyse its structure.
To this end we employ the semi-discrete Fourier transform or Fourier series transform
\begin{equation*}
\hat f(\xi)=\int_{\T^d} f\exp(-2\pi i\xi\cdot x)\,\d x\qquad\text{for }\xi\in\Z^d
\end{equation*}
of functions $f:\T^d\to\R$ and of distributions on $\T^d$ (for vector- or matrix-valued functions the Fourier transform is just applied componentwise).
Fixing a maximum frequency $R$, we introduce the Fourier series truncated at frequency $R$ by
\begin{equation*}
\truncFourier{f}{R}(\xi)=\begin{cases}\hat f(\xi)&\text{if }|\xi|_\infty\leq R,\\0&\text{else}\end{cases}
\end{equation*}
and denote the corresponding truncation operator as $\trunc$, defined via
\begin{equation*}
\widehat{\trunc f}=\truncFourier fR.
\end{equation*}
The consistency order of $\trunc$ obviously depends on the Sobolev regularity.

\begin{lemma}[Consistency order of truncated Fourier series]\label{thm:consistencyTruncatedFourier}
For $k\geq l\geq0$ we have
\begin{equation*}
\|\trunc f-f\|_{H^l}\lesssim R^{l-k}\|\trunc f-f\|_{H^k}\leq R^{l-k}\|f\|_{H^k}.
\end{equation*}
\end{lemma}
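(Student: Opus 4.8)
The plan is to argue entirely on the Fourier side. Recall that for any integer $s\ge0$ the $H^s(\T^d)$-norm is equivalent to $g\mapsto\bigl(\sum_{\xi\in\Z^d}(1+|\xi|^2)^s|\hat g(\xi)|^2\bigr)^{1/2}$ (and componentwise for $\R^d$-valued functions). Since $\trunc$ acts diagonally in frequency, the Fourier coefficients of the error satisfy $\widehat{\trunc f-f}(\xi)=\hat f(\xi)$ for $|\xi|_\infty>R$ and $\widehat{\trunc f-f}(\xi)=0$ otherwise, so for $s\in\{l,k\}$
\[
\|\trunc f-f\|_{H^s}^2\sim\sum_{|\xi|_\infty>R}(1+|\xi|^2)^s\,|\hat f(\xi)|^2 .
\]

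First I would dispatch the right-hand inequality $\|\trunc f-f\|_{H^k}\le\|f\|_{H^k}$: by the above it follows from enlarging the summation index set from $\{\xi:|\xi|_\infty>R\}$ to all of $\Z^d$, which reproduces $\|f\|_{H^k}^2$. (With the canonical Fourier norm this is literally $\le$; for a general equivalent norm it becomes $\lesssim$, which is all that is needed. If $f\notin H^k$ the right-hand side is infinite and there is nothing to prove.)

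For the left-hand inequality, fix $\xi$ with $|\xi|_\infty>R$, so that $1+|\xi|^2\ge1+|\xi|_\infty^2>1+R^2$. Since $l-k\le0$, the function $t\mapsto t^{\,l-k}$ is non-increasing on $(0,\infty)$, hence
\[
(1+|\xi|^2)^l=(1+|\xi|^2)^{l-k}(1+|\xi|^2)^k\le(1+R^2)^{l-k}(1+|\xi|^2)^k\le R^{2(l-k)}(1+|\xi|^2)^k,
\]
the last step using $1+R^2\ge R^2$ together with the non-positivity of $l-k$. Summing over $|\xi|_\infty>R$ and invoking the displayed norm equivalences gives $\|\trunc f-f\|_{H^l}^2\lesssim R^{2(l-k)}\|\trunc f-f\|_{H^k}^2$; taking square roots finishes the proof.

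There is essentially no genuine obstacle. The only points requiring a little care are the bookkeeping of the dimension- and norm-dependent constants absorbed into $\lesssim$ (including the passage from $\le$ to $\lesssim$ when one does not use the canonical Fourier-based norm), the elementary bound $(1+R^2)^{l-k}\le R^{2(l-k)}$ for the non-positive exponent $l-k$, and the degenerate case $l=k$, in which the left inequality holds trivially with constant $1$.
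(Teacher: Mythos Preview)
Your proof is correct and follows essentially the same approach as the paper: both argue on the Fourier side, use that the error $\trunc f-f$ has Fourier support in $\{|\xi|_\infty>R\}$, and then compare the Sobolev weights there to extract the factor $R^{l-k}$. The only cosmetic difference is the choice of equivalent Sobolev norm---you use the weight $(1+|\xi|^2)^s$ while the paper uses $(1+|\xi|^{2s})$---which does not affect the argument.
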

\begin{proof}
Without loss of generality we consider the norm $\|f\|_{H^k}^2=\langle f,(1+(\frac{-\Delta}{4\pi^2})^k)f\rangle$ on $H^k(\T^d)$ and analogously on $H^l(\T^d)$.
Let $f\in H^k(\T^d)$, then
$\|\trunc f-f\|_{H^l}^2
=\sum_{|\xi|_\infty>R}|\hat f(\xi)|^2(1+|\xi|^{2l})
\leq\frac{1+R^{2l}}{1+R^{2k}}\sum_{|\xi|_\infty>R}|\hat f(\xi)|^2(1+|\xi|^{2k})
\lesssim R^{2(l-k)}\|\trunc f-f\|_{H^k}^2$.
\end{proof}

Analogously one obtains
\begin{equation}\label{eqn:normEstimateTruncation}
\|\trunc f\|_{H^k}\lesssim R^{k-l}\|\trunc f\|_{H^l}
\qquad\text{for }k\geq l\geq0.
\end{equation}
In case $f\in H^k(\T^d)$ has no additional regularity one only knows $\|\trunc f-f\|_{H^k}\to0$ as $R\to\infty$, but one cannot give a rate.
At least one obtains uniform convergence for a continuous family of functions
(below, recall that $\Id$ denotes the identity operator).

\begin{lemma}[Uniform Fourier decay along a continuous path]\label{thm:FourierDecay}
Let $k\in\Z$ and $w:[0,1]\to H^k(\T^d)$ be continuous, then $\|(\Id-\trunc)w_t\|_{H^k}\to0$ as $R\to\infty$, uniformly in $t\in[0,1]$.
\end{lemma}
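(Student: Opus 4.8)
The plan is to exploit the compactness of the image $\{w_t : t \in [0,1]\}$ in $H^k(\T^d)$ together with the pointwise convergence $\|(\Id-\trunc)w\|_{H^k}\to 0$ for each fixed $w \in H^k(\T^d)$, and then upgrade pointwise to uniform convergence by a standard covering argument. First I would observe that since $[0,1]$ is compact and $w$ is continuous, the set $K = w([0,1])$ is a compact subset of $H^k(\T^d)$. Next, for each fixed $w \in H^k(\T^d)$ one has $\|(\Id-\trunc)w\|_{H^k}^2 = \sum_{|\xi|_\infty > R}|\hat w(\xi)|^2(1+|\xi|^{2k}) \to 0$ as $R\to\infty$, simply because this is the tail of the convergent series $\sum_\xi |\hat w(\xi)|^2(1+|\xi|^{2k}) = \|w\|_{H^k}^2$ (using the same norm as in the proof of \cref{thm:consistencyTruncatedFourier}). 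Also note the uniform bound $\|(\Id-\trunc)\|_{H^k \to H^k} \leq 1$, since truncation only removes Fourier modes.

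The covering step proceeds as follows. Fix $\varepsilon > 0$. By compactness of $K$, cover it by finitely many balls $B_{\varepsilon/3}(w_{t_1}), \ldots, B_{\varepsilon/3}(w_{t_n})$ with centres $w_{t_1}, \ldots, w_{t_n} \in K$. By the pointwise convergence, choose $R_0$ large enough that $\|(\Id-\trunc)w_{t_j}\|_{H^k} < \varepsilon/3$ for all $j = 1,\ldots,n$ whenever $R \geq R_0$. Now for arbitrary $t \in [0,1]$, pick $j$ with $\|w_t - w_{t_j}\|_{H^k} < \varepsilon/3$; then for $R \geq R_0$,
\begin{equation*}
\|(\Id-\trunc)w_t\|_{H^k} \leq \|(\Id-\trunc)(w_t - w_{t_j})\|_{H^k} + \|(\Id-\trunc)w_{t_j}\|_{H^k} < \|w_t - w_{t_j}\|_{H^k} + \tfrac{\varepsilon}{3} < \varepsilon,
\end{equation*}
using $\|\Id - \trunc\|_{H^k\to H^k}\leq 1$ in the first term. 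Since the bound is independent of $t$, this gives the claimed uniform convergence.

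There is no real obstacle here; the only point requiring a modicum of care is that the operators $\trunc$ are uniformly bounded on $H^k$ (indeed with operator norm $\leq 1$), which is what allows the single-ball estimate to absorb the deviation $w_t - w_{t_j}$ without losing a constant. For negative $k$ the argument is verbatim the same: truncation is still a Fourier multiplier with symbol bounded by $1$, hence $\|\Id-\trunc\|_{H^k\to H^k}\leq 1$, and $\|(\Id-\trunc)w\|_{H^k}^2 = \sum_{|\xi|_\infty>R}|\hat w(\xi)|^2(1+|\xi|^2)^{k}$ is again the tail of a convergent series. One may equivalently phrase the whole proof as: the family $\{(\Id-\trunc)\}_{R}$ is uniformly bounded and converges strongly to $0$ on $H^k(\T^d)$, hence converges to $0$ uniformly on compact sets, and $w([0,1])$ is compact.
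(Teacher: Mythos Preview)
Your proof is correct and complete; the covering-by-balls argument using compactness of $w([0,1])$ together with the uniform bound $\|\Id-\trunc\|_{H^k\to H^k}\le 1$ is entirely standard and works without change for negative $k$ (as you note, via the weight $(1+|\xi|^2)^k$).

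The paper's own proof takes a somewhat different route. Rather than covering the compact image, it argues by contradiction: supposing the convergence fails, one picks times $t_R$ with $\|(\Id-\trunc)w_{t_R}\|_{H^k}>C$, extracts a convergent subsequence $t_R\to t$ in the compact domain $[0,1]$, and then exploits the \emph{monotonicity} $\|(\Id-\trunc)u\|_{H^k}\le\|(\Id-\trunc[r])u\|_{H^k}$ for $R\ge r$ together with continuity of the fixed operator $\Id-\trunc[r]$ to reach a contradiction with $\|(\Id-\trunc[r])w_t\|_{H^k}<C/2$ for suitably large $r$. Your argument is the more general one --- it is really the statement that a uniformly bounded, strongly convergent family of operators converges uniformly on compact sets --- and requires no monotonicity; the paper's argument is slightly more ad hoc but avoids the explicit $\varepsilon/3$ covering by trading it for the monotone structure of Fourier truncation. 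Both are short and elementary.
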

\begin{proof}
For a proof by contradiction assume there exists some $C>0$ as well as times $t_R\in[0,1]$, $R\in\N$, such that $\|(\Id-\trunc)w_{t_R}\|_{H^k}>C$ for all $R$.
By compactness of $[0,1]$ the sequence $t_R$ contains a converging subsequence with limit $t\in[0,1]$.
Since $\|(\Id-\trunc[r])w_{t_R}\|_{H^k}>C$ for all $r\leq R$ we may assume without loss of generality that the whole sequence converges.
Let $r\in\N$ be such that $\|(\Id-\trunc[r])w_t\|_{H^k}<C/2$,
then for $R>r$ we have
\begin{equation*}
C
<\|(\Id-\trunc)w_{t_R}\|_{H^k}
\leq\|(\Id-\trunc[r])w_{t_R}\|_{H^k}
\mathop\to_{R\to\infty}\|(\Id-\trunc[r])w_t\|_{H^k}
<C/2,
\end{equation*}
where the limit follows from the continuity of $(\Id-\trunc[r])$ and $t\mapsto w_t$,
yielding the desired contradiction.
\end{proof}

The formal geodesic equation \eqref{eqn:EPDiff} in Fourier space reads
\begin{equation*}
\dot{\hat\rho}_t=-((\hat\rho_t\otimes\hat D)*\hat v_t+\hat\rho_t*(\hat D\cdot\hat v_t)+(\hat D\otimes\hat v_t)*\hat\rho_t)
\qquad\text{with }
\hat v_t=\hat\Riesz\hat\rho_t
\end{equation*}
or equivalently
\begin{equation}\label{eqn:geodesicEquationLDDMMFourier}
\dot{\hat v}_t
=-\hat\Riesz((\hat\rho_t\otimes\hat D)*\hat v_t+\hat\rho_t*(\hat D\cdot\hat v_t)+(\hat D\otimes\hat v_t)*\hat\rho_t)
\qquad\text{with }\quad
\hat\rho_t=\hat\L\hat v_t,
\end{equation}
where $*$ denotes discrete convolution (to be interpreted in the obvious way if a matrix field is convolved with a vector field),
$\hat D(\xi)=2\pi i\xi$ is the Fourier multiplier of differentiation,
and $\hat\Riesz$ and $\hat\L=\hat\Riesz^{-1}$ are just the Riesz operator and its inverse expressed as operators on Fourier space.
In the following we will assume the Riesz operator and its inverse to be Fourier multipliers
so that the relation $v_t=\Riesz\rho_t$ between velocity and momentum remains after Fourier truncation, $\trunc v_t=\Riesz \trunc\rho_t$.
Therefore, $\hat\Riesz$ and $\hat\L$ may simply be interpreted as functions on $\Z^d$ in the above.

The spatial discretization of \eqref{eqn:geodesicEquationLDDMMFourier} is merely to replace the Fourier transforms of $v_0$ and $\Riesz$ by their truncated versions, thus
\begin{equation}\label{eqn:LDDMMdiscreteFourier}
\dot{\hat V}_t
=-\truncFourier\Riesz R((\hat P_t\otimes\hat D)*\hat V_t+\hat P_t*(\hat D\cdot\hat V_t)+(\hat D\otimes\hat V_t)*\hat P_t)
\quad\text{ for }\quad
\hat P_t=\hat\L\hat V_t
\end{equation}
and $\hat V_0=\truncFourier vR_0$, where we use capital letters $V$ and $P$ for the numerical approximation of $v$ and $\rho$.
By construction, $\hat V_t$ and $\hat P_t$ have support on $Z_{d,R}=\{\xi\in\Z^d\,|\,|\xi|_\infty\leq R\}$.
This spectral discretization is particularly popular due to its computational efficiency.

\begin{proposition}[Computational effort]\label{thm:effortLDDMM}
The right-hand side of \eqref{eqn:LDDMMdiscreteFourier} can be evaluated in $O(R^d\log R)$ time.
\end{proposition}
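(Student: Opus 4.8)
The plan is to exploit the structure of \eqref{eqn:LDDMMdiscreteFourier}: the right-hand side is built from a constant number of discrete convolutions of vector- and matrix-valued functions supported on $Z_{d,R}$, interspersed with pointwise multiplications by the Fourier multipliers $\hat D$, $\hat\L$, and $\truncFourier\Riesz R$. A naive evaluation of a discrete convolution on $Z_{d,R}$ costs $O(|Z_{d,R}|^2)=O(R^{2d})$ operations, so the point is that one should instead pass to the spatial side, where convolution becomes pointwise multiplication.

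Concretely, I would proceed as follows. First, note $|Z_{d,R}|=(2R+1)^d=O(R^d)$, so all the arrays involved have $O(R^d)$ entries. Second, observe that each discrete convolution $\hat a*\hat b$ of two sequences supported on $Z_{d,R}$ is supported on $Z_{d,2R}$ and equals the Fourier coefficients of the product $ab$ of the corresponding bandlimited functions $a=\sum_{\xi\in Z_{d,R}}\hat a(\xi)e^{2\pi i\xi\cdot x}$ and $b$ likewise. Hence one computes it by: (i) zero-padding $\hat a$ and $\hat b$ to the grid $Z_{d,2R}$ (or a slightly larger power-of-two-sized grid), (ii) applying the inverse FFT on that grid to obtain samples of $a$ and $b$, (iii) multiplying the sample arrays pointwise, and (iv) applying the forward FFT to recover the Fourier coefficients of $ab$. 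Each $d$-dimensional FFT on a grid with $O(R^d)$ nodes costs $O(R^d\log(R^d))=O(R^d\log R)$ (absorbing the constant $d$). The pointwise multiplications by $\hat D$, $\hat\L$, and the truncation/multiplication by $\truncFourier\Riesz R$ are all $O(R^d)$. Since \eqref{eqn:LDDMMdiscreteFourier} involves only a fixed, $d$-independent-up-to-constants number of such convolution and multiplication steps (three convolution terms, plus the applications of $\hat D$, $\hat\L$, $\truncFourier\Riesz R$), the total cost is $O(R^d\log R)$.

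The only mild subtlety — and the closest thing to an obstacle — is bookkeeping the band limits so that no aliasing corrupts the result: the products appearing inside \eqref{eqn:LDDMMdiscreteFourier} have frequency content up to $|\xi|_\infty\le 2R$ before the outer truncation $\truncFourier\Riesz R$ is applied, so one must carry out the FFT-based multiplication on a grid of side at least $4R+1$ (the classical zero-padding/dealiasing rule) rather than $2R+1$; this only changes the constant in $O(R^d\log R)$. One should also remark that the matrix–vector convolutions $(\hat P_t\otimes\hat D)*\hat V_t$ and $(\hat D\otimes\hat V_t)*\hat P_t$ are handled componentwise, which multiplies the work by $d^2$, again absorbed into the implied constant. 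Beyond this, everything is routine, so I would keep the written proof to a few lines: state the FFT cost, note that convolution is realized by pointwise multiplication after (inverse) FFT on a grid of $O(R^d)$ nodes, count the fixed number of such operations in \eqref{eqn:LDDMMdiscreteFourier}, and conclude.
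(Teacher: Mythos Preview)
Your proposal is correct and follows essentially the same approach as the paper: count the pointwise multiplications as $O(R^d)$, and evaluate each convolution by zero-padding to $Z_{d,2R}$ (a grid of side $4R+1$, exactly as you note) and using the FFT and the discrete convolution theorem at cost $O(R^d\log R)$. Your additional remarks on dealiasing and componentwise handling of the matrix--vector convolutions are accurate refinements that the paper leaves implicit.
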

\begin{proof}
Since $\hat P_t$ and $\hat V_t$ are supported on $Z_{d,R}$,
all pointwise multiplications of $\hat P_t$, $\hat V_t$, and $\truncFourier\Riesz R$ require $O(R^d)$ floating point operations, likewise the multiplications with $\hat\L$ and $\hat D$.
It remains to compute convolutions of the form $\hat f*\hat g$ with $\hat f$ and $\hat g$ supported on $Z_{d,R}$; to this end we extend $\hat f$ and $\hat g$ to $Z_{d,2R}$ with zeros
and simply perform a circular convolution with subsequent truncation to $Z_{d,R}$.
Using the Fast Fourier Transform and the Discrete Convolution Theorem this requires $O(R^d\log R)$ operations.
\end{proof}

The (space-discrete) ODE \eqref{eqn:LDDMMdiscreteFourier} is equivalently expressed as
\begin{equation}\label{eqn:LDDMMdiscrete}
\dot P_t=-\trunc{}[\div(P_t\otimes V_t)+(DV_t)^TP_t]
=-\trunc\,\adjointDer_{V_t}^*P_t
\quad\text{ for }\quad
V_t=\Riesz P_t.
\end{equation}
As already observed in \cite{ZhFl19}, 
this discrete approximation may be viewed as geodesic ODE in a finite-dimensional ``approximate'' Riemannian Lie group.
Here and in the following, ``approximate'' shall only refer to the fact that the associated bracket has a nonzero, but bounded Jacobiator,
i.e.\ the Jacobi identity is only satisfied up to a bounded error.%
\footnote{In the context of deformation theory for Lie groups and Lie algebras it seems
that closeness to a Lie algebra can sometimes be quantified by the smallness of the Jacobiator (the deviation from the Jacobi identity),
however, in our setting the Jacobiator is not small, but scales like $R^2$.}

\begin{proposition}[Approximate Riemannian Lie group geodesics]
The discrete approximation \eqref{eqn:LDDMMdiscrete} is equivalent to the Euler--Poincar\'e equation
\begin{equation}\label{eqn:LDDMMapproximateLie}
\dot P_t=-\widetilde{\adjointDer}_{V_t}^*P_t,\quad V_t=\Riesz P_t,
\end{equation}
on $\truncSpace=\trunc H^m(\T^d;\R^d)$ with inner product induced by the canonical embedding $\truncSpace\hookrightarrow H^m(\T^d;\R^d)$
and approximate Lie bracket $\widetilde{\adjointDer}_{V}W=-[V,W]_R=-\trunc([V,W])$.
\end{proposition}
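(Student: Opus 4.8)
The plan is to verify that the Euler--Poincar\'e equation \eqref{eqn:LDDMMapproximateLie} on the finite-dimensional Lie algebra $\truncSpace$, equipped with the approximate bracket $\widetilde{\adjointDer}$ and the inherited inner product, unfolds exactly into the space-discrete ODE \eqref{eqn:LDDMMdiscrete}. The two ingredients to pin down are (i) the inner product on $\truncSpace$ and the associated Riesz/inertia operator, and (ii) the coadjoint operator $\widetilde{\adjointDer}^*$ induced by the approximate bracket. Since $\truncSpace$ carries the restriction of $\IPHm{\cdot}{\cdot}$ and since $\Riesz$ and $\L$ are assumed to be Fourier multipliers, $\trunc$ commutes with both; hence $\trunc\Riesz\trunc$ is precisely the Riesz isomorphism of $\truncSpace$ relative to its inherited inner product, and its inverse on $\truncSpace$ acts as $\trunc\L$. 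In particular the relation $V_t=\Riesz P_t$ restricted to $\truncSpace$ is the same as $P_t=\trunc\L V_t=\hat\L\hat V_t$, matching \eqref{eqn:LDDMMdiscreteFourier}.

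\textbf{First I would} compute $\widetilde{\adjointDer}_V^*$ explicitly. By definition of the coadjoint action on the dual of $(\truncSpace,\IPHm{\cdot}{\cdot})$, for $P\in\truncSpace$ (identified with the functional $W\mapsto\IPHm{P}{W}$, equivalently $W\mapsto\langle\L P,W\rangle$ since $P$ is bandlimited and $\L$ is a Fourier multiplier) and $W\in\truncSpace$ one has
\begin{equation*}
\langle\widetilde{\adjointDer}_V^*P,W\rangle
=\langle P,\widetilde{\adjointDer}_VW\rangle
=\langle P,-\trunc([V,W])\rangle
=\langle P,-[V,W]\rangle
=\langle\adjointDer_V^*P,W\rangle,
\end{equation*}
where the third equality uses that $P$ is supported on $Z_{d,R}$ so that testing against $\trunc([V,W])$ equals testing against $[V,W]$, and the last equality is the very definition of the (distributional) coadjoint $\adjointDer_V^*$ from \eqref{eqn:EPDiff}. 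Thus $\widetilde{\adjointDer}_V^*P$ agrees with $\adjointDer_V^*P=\div(P\otimes V)+(DV)^TP$ as a functional on $\truncSpace$; since $\widetilde{\adjointDer}_V^*P$ is by construction an element of $\truncSpace$ (the dual of a finite-dimensional space being identified with itself), it must equal the $\truncSpace$-orthogonal projection of $\adjointDer_V^*P$, which — again because $\Riesz$ is a Fourier multiplier so that the inherited inner product is diagonal in the Fourier basis — is exactly $\trunc(\adjointDer_V^*P)$. Substituting this into \eqref{eqn:LDDMMapproximateLie} yields $\dot P_t=-\trunc\,\adjointDer_{V_t}^*P_t$ with $V_t=\Riesz P_t$, which is \eqref{eqn:LDDMMdiscrete}.

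\textbf{It also remains} to check that $\widetilde{\adjointDer}$ is a legitimate (approximate) bracket on $\truncSpace$, i.e.\ that $[V,W]_R=\trunc([V,W])$ is bilinear, antisymmetric, and maps $\truncSpace\times\truncSpace$ into $\truncSpace$; bilinearity and mapping property are immediate, and antisymmetry follows from the antisymmetry of $[\cdot,\cdot]$ together with the linearity of $\trunc$. (One need not — and in general cannot — verify the Jacobi identity; this is precisely why the word ``approximate'' appears, and is elaborated in the surrounding discussion on obstructions to structure-preserving discretizations.) Finally, one should note that all objects are smooth on the finite-dimensional manifold $\truncSpace$, so the Euler--Poincar\'e formalism applies verbatim and \eqref{eqn:LDDMMapproximateLie} is a genuine ODE.

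\textbf{The main obstacle} is bookkeeping rather than depth: one must be careful about the three distinct identifications in play — $\truncSpace$ with a subspace of $H^m(\T^d;\R^d)$, $\truncSpace$ with its own dual via $\IPHm{\cdot}{\cdot}$, and the momentum $P$ with a distribution $\L P$ — and verify that the Fourier-multiplier hypothesis on $\Riesz$ is exactly what makes these identifications compatible, i.e.\ makes $\trunc$ an orthogonal projection for the inherited inner product so that restricting the coadjoint action and truncating coincide. Once this compatibility is spelled out, the equivalence of \eqref{eqn:LDDMMapproximateLie} and \eqref{eqn:LDDMMdiscrete} is the short computation above.
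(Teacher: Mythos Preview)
Your proposal is correct and follows essentially the same approach as the paper: both hinge on the single computation $\langle\widetilde{\adjointDer}_V^*P,W\rangle=\langle P,\widetilde{\adjointDer}_VW\rangle=\langle P,\adjointDer_VW\rangle=\langle\adjointDer_V^*P,W\rangle$ for $P,W\in\truncSpace$ (using that $P$ is bandlimited to drop the $\trunc$), from which $\widetilde{\adjointDer}_V^*P=\trunc\,\adjointDer_V^*P$ follows. The paper presents this in three lines without the surrounding discussion of Riesz isomorphisms, orthogonality of $\trunc$, or bracket axioms; your additional bookkeeping is sound but not needed for the bare equivalence.
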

\begin{proof}
To compare the solutions of \eqref{eqn:LDDMMapproximateLie} and \eqref{eqn:LDDMMdiscrete} it obviously suffices to work with functions from $\truncSpace$,
since the solution of \eqref{eqn:LDDMMdiscrete} lies in this space.
Thus, let $W\in\truncSpace$ and let $P_t,V_t\in\truncSpace$ solve \eqref{eqn:LDDMMapproximateLie}, then
\begin{equation*}
\langle\dot P_t,W\rangle
=-\langle P_t,\widetilde{\adjointDer}_{V_t}W\rangle
=-\langle P_t,\adjointDer_{V_t}W\rangle
=\langle-\adjointDer_{V_t}^*P_t,W\rangle.
\end{equation*}
Since this holds for arbitrary $W\in\truncSpace$ we have $\dot P_t=\trunc\dot P_t=-\trunc\,\adjointDer_{V_t}^*P_t$, which coincides with \eqref{eqn:LDDMMdiscrete}.
\end{proof}

\begin{remark}[Obstruction to finite-dimensional Lie group approximations]
The Lie bracket $[v,w]$ satisfies the three characterizing properties of a Lie bracket:
bilinearity, antisymmetry, and the Jacobi identity.
However, as discussed before, it is not closed in $H^m(\T^d;\R^d)$, i.e.\ $v,w\in H^m(\T^d;\R^d)$ does not imply $[v,w]\in H^m(\T^d;\R^d)$.
In contrast, the approximate Lie bracket $[V,W]_R$ is closed in $\truncSpace$,
but it is not a legitimate Lie bracket as it does not satisfy the Jacobi identity.
This is due to a principal obstruction observed by Omori in \cite{Om78}:
There exists no Lie algebra $\truncSpace$ with finite Fourier support and correct Lie bracket.
Let us quickly adapt the argument from \cite{Om78} to our setting, for simplicity restricting to dimension $d=1$:
Since the Lie bracket is closed, as soon as the Lie algebra $\truncSpace$ contains trigonometric functions of two different nonzero frequencies,
then by repeatedly applying the Lie bracket $[\cdot,\cdot]$ one obtains trigonometric functions of arbitrarily high frequencies, which therefore must belong to $\truncSpace$.
It does not help to drop the condition on $\truncSpace$ of finite Fourier support, either:
There cannot exist any Hilbert Lie algebra containing the constant function and trigonometric functions of arbitrarily high frequencies.
Indeed, due to $[[1,\sin(2\pi\xi x)],1]=[2\pi\xi\cos(2\pi\xi x),1]=(2\pi\xi)^2\sin(2\pi\xi x)$ the Lie bracket cannot be bounded in any norm on $\truncSpace$.
The only exception are the spaces $\truncSpace=\Span(1,\sin(2\pi\xi x),\cos(2\pi\xi x))$ with exactly one frequency $\xi\in\Z$;
on those, the standard Lie bracket is closed so that $\truncSpace$ is a true Lie algebra.
\end{remark}

By construction, the discrete approximation \eqref{eqn:LDDMMdiscrete} is structure-preserving in that -- just like for the original geodesic equation -- the norm of the velocity is conserved.

\begin{lemma}[Constant velocity geodesics]\label{thm:normPreservation}
The solution $V_t$ to \eqref{eqn:LDDMMdiscrete} conserves $\NHm{V_t}$ over time.
\end{lemma}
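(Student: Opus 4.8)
The plan is to differentiate $\NHm{V_t}^2 = \IPHm{V_t}{V_t}$ in time and show that the derivative vanishes, using the skew-symmetry built into the structure of \eqref{eqn:LDDMMdiscrete}. First I would note that since $V_t$ and $P_t=\L V_t$ both lie in the finite-dimensional space $\truncSpace$ and solve an ODE with a polynomial (hence locally Lipschitz) right-hand side, $t\mapsto V_t$ is smooth, so the computation below is rigorous rather than formal. Recalling the defining relation $\IPHm{v}{w}=\langle\L v,w\rangle$ of the Riesz isomorphism, one writes
\begin{equation*}
\tfrac12\tfrac{\d}{\d t}\NHm{V_t}^2
=\IPHm{\dot V_t}{V_t}
=\langle\L\dot V_t,V_t\rangle
=\langle\dot P_t,V_t\rangle
=-\langle\trunc\,\adjointDer_{V_t}^*P_t,V_t\rangle.
\end{equation*}

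Next I would use that $\trunc$ is the $H^m$-orthogonal projection onto $\truncSpace$ (since it is a Fourier multiplier projection, it is self-adjoint both for $\langle\cdot,\cdot\rangle$ and, because $\L$ is a Fourier multiplier, for $\IPHm{\cdot}{\cdot}$), together with the fact that $V_t\in\truncSpace$. Hence $\langle\trunc\,\adjointDer_{V_t}^*P_t,V_t\rangle=\langle\adjointDer_{V_t}^*P_t,\trunc V_t\rangle=\langle\adjointDer_{V_t}^*P_t,V_t\rangle=\langle P_t,\adjointDer_{V_t}V_t\rangle$. Now the key algebraic point: $\adjointDer_{V_t}V_t=-[V_t,V_t]=0$ by antisymmetry of the Lie bracket, so the whole expression vanishes and $\NHm{V_t}$ is constant. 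Equivalently, and perhaps cleaner to present, one can invoke the preceding proposition identifying \eqref{eqn:LDDMMdiscrete} with the Euler--Poincar\'e equation \eqref{eqn:LDDMMapproximateLie} on $\truncSpace$ and compute $\tfrac12\tfrac{\d}{\d t}\NHm{V_t}^2=-\langle P_t,\widetilde{\adjointDer}_{V_t}V_t\rangle=-\langle P_t,\trunc([V_t,V_t])\rangle=0$, since $[V_t,V_t]=0$; here only the antisymmetry of the approximate bracket is needed, not the Jacobi identity (which is exactly the property it lacks).

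There is essentially no hard step here — this is the discrete analogue of the standard conservation-of-energy argument for right-invariant geodesic equations, and the only thing one must be careful about is the bookkeeping with $\trunc$, namely that it is a symmetric projection commuting appropriately with $\L$, so that it can be moved off $\adjointDer_{V_t}^*P_t$ and absorbed harmlessly into $V_t\in\truncSpace$. If one wanted to avoid even that, one could observe directly that $\langle\trunc\,\adjointDer_{V_t}^*P_t,V_t\rangle=\langle\adjointDer_{V_t}^*P_t,V_t\rangle$ because $V_t$ is already bandlimited and $\trunc$ acts as the identity on bandlimited test functions under the $L^2$ pairing, which requires no symmetry of $\trunc$ at all — only that $\trunc^*=\trunc$ with respect to $\langle\cdot,\cdot\rangle$, which is immediate from $\trunc$ being a real Fourier multiplier with $\{0,1\}$-valued symbol. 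Either way the argument is three lines; the substantive content was already packed into the derivation of \eqref{eqn:LDDMMdiscrete} and the antisymmetry of $[\cdot,\cdot]$.
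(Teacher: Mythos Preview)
Your proof is correct and essentially identical to the paper's: the paper also differentiates $\tfrac12\NHm{V_t}^2$, invokes the Euler--Poincar\'e form \eqref{eqn:LDDMMapproximateLie}, and concludes from the antisymmetry $\widetilde{\adjointDer}_{V_t}V_t=0$. Your ``cleaner'' alternative is precisely the paper's argument, and your extra remarks on $\trunc$ being a self-adjoint projection just make explicit what the paper leaves implicit.
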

\begin{proof}
In fact, this holds for any equation of the form \eqref{eqn:LDDMMapproximateLie} with antisymmetric Lie bracket approximation,
i.e.\ with $\widetilde{\adjointDer}_{V}W=-\widetilde{\adjointDer}_{W}V$.
Indeed, let $P_t,V_t$ denote the solution of \eqref{eqn:LDDMMapproximateLie}, then
\begin{equation*}
\frac\d{\d t}\frac{\NHm{V_t}^2}2
=\langle V_t,\dot P_t\rangle
=-\langle\widetilde{\adjointDer}_{V_t}V_t,P_t\rangle
=0.
\qedhere
\end{equation*}
\end{proof}

\section{Convergence of spectral discretization}\label{sec:convergence}
In this section we prove convergence of the spectral discretization as well as convergence rates depending on the geodesic regularity.
We begin with a regularity estimate for quadratic terms that occur in the geodesic equation.

\begin{proposition}[Quadratic term regularity]\label{thm:productsInB}
Let $m>\frac d2+1$ and let $\L$ satisfy the conditions from \cref{thm:smoothMetric} for $l=1$.
Then 
\begin{equation*}
\|w^TD\L w\|_{H^{-m-1}}\lesssim\|w\|_{H^m}^2
\qquad\text{for all }w\in H^m(\T^d;\R^d).
\end{equation*}
\end{proposition}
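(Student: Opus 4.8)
The plan is to estimate $\|w^T D\L w\|_{H^{-m-1}}$ by duality, i.e.\ to bound $\langle w^T D\L w, \eta\rangle$ for arbitrary smooth $\eta\in H^{m+1}(\T^d;\R^d)$ (or rather scalar $\eta$, since $w^T D\L w$ is scalar-valued) and show it is $\lesssim \|w\|_{H^m}^2\|\eta\|_{H^{m+1}}$. The natural move is an integration by parts to shift the derivative $D$ off of $\L w$: writing $\L = \bar\L + B^*B$ from \cref{thm:smoothMetric} with $l=1$, one has $\L w \in H^{-m}(\T^d;\R^d)$, and the product $w^T D\L w$ should be rewritten, modulo terms where the derivative falls on $w$ or on $\eta$, as something like $-\langle \L w, \div(\eta\, w)\rangle$ plus lower-order pieces. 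The point is that $\div(\eta w) = (D\eta) w + \eta\,\div w \in H^m(\T^d;\R^d)$ whenever $\eta\in H^{m+1}$ and $w\in H^m$ (using that $H^m$, $H^{m-1}$ are Banach algebras and $H^{m+1}\cdot H^m \hookrightarrow H^m$ via \cite[Thm.\,6.1]{BeHo21}), so that $\langle \L w, \div(\eta w)\rangle$ is a well-defined pairing of $H^{-m}$ with $H^m$, bounded by $\|\L w\|_{H^{-m}}\|\div(\eta w)\|_{H^m} \lesssim \|w\|_{H^m}\cdot\|\eta\|_{H^{m+1}}\|w\|_{H^m}$.

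Concretely, I would first make the pairing rigorous by density: define $T(w) := w^T D\L w$ a priori only as a distribution of order $m+1$ and verify $\langle T(w),\eta\rangle = -\int_{\T^d} \L w \cdot \bigl((D\eta)w + \eta\,\div w\bigr)\,\d x$ for smooth $w$, then extend. Next, split $\L w$ via $\L = \bar\L + B^*B$. For the $\bar\L$ part, \cref{thm:smoothMetric} with $l=1$ gives $\bar\L : H^{m-1}\to H^{1-m}$ bounded, hence also $\bar\L: H^m \to H^{1-m}\hookrightarrow H^{-m}$, so $\|\bar\L w\|_{H^{-m}}\lesssim\|w\|_{H^m}$ and this term is controlled as above. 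For the $B^*B$ part, I would not estimate $B^*B w$ in $H^{-m}$ directly but rather move one copy of $B$ back by the adjoint: $\langle B^*B w, \div(\eta w)\rangle = \langle B w, B(\div(\eta w))\rangle$, where $Bw\in L^2$ since $B = \sum_{k=0}^m a_k D^k$ has order $m$ with bounded (indeed $W^{1,\infty}$) coefficients and $w\in H^m$, and $B(\div(\eta w)) \in L^2$ since $\div(\eta w)\in H^m$ and $B$ has order $m$; thus this pairing is bounded by $\|Bw\|_{L^2}\|B(\div(\eta w))\|_{L^2} \lesssim \|w\|_{H^m}\cdot\|\div(\eta w)\|_{H^m} \lesssim \|w\|_{H^m}^2\|\eta\|_{H^{m+1}}$. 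Taking the supremum over $\|\eta\|_{H^{m+1}}\le 1$ yields the claim.

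The main obstacle I anticipate is bookkeeping the integration-by-parts identity so that every pairing appearing is between genuinely dual spaces — in particular making sure no term requires pairing $H^{-m}$ against something merely in $H^{m-1}$ or pairing $B^*Bw$ (which lies only in $H^{-m}$) against a function with insufficient regularity; the trick of splitting via $B^*B$ and keeping one $B$ on each side, together with the observation that $H^{m+1}\cdot H^m$ and $H^{m+1}\cdot H^{m-1}$ land in $H^m$, is what keeps all the pieces legitimate, and I expect the coefficient tensors being merely $W^{1,\infty}$ (not $W^{m,\infty}$) is exactly why only $l=1$ is needed here. A secondary point is justifying that $w^T D\L w$ does make sense as an element of $H^{-m-1}$ in the first place before estimating its norm, which the density argument handles since the right-hand side of the integration-by-parts identity is continuous in $w\in H^m$.
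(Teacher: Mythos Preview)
Your integration-by-parts strategy has a genuine gap at the $B^*B$ step. The claim that $\div(\eta w)\in H^m$ when $\eta\in H^{m+1}$ and $w\in H^m$ is false: $\eta w\in H^m$ (since $H^m$ is a Banach algebra), but taking the divergence drops one order, giving only $\div(\eta w)\in H^{m-1}$. Equivalently, your asserted embedding $H^{m+1}\cdot H^{m-1}\hookrightarrow H^m$ does not hold (take $\eta\equiv1$: the product is just the $H^{m-1}$ factor). Consequently, in your pairing
\[
\langle B^*Bw,\div(\eta w)\rangle=\langle Bw,B(\div(\eta w))\rangle
\]
the right factor $B(\div(\eta w))$ lies only in $H^{-1}$, not $L^2$, and cannot be paired against $Bw\in L^2$. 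The $\bar\L$ part of your argument is fine (since $\bar\L w\in H^{1-m}$ does pair against $H^{m-1}$), but for the principal part you are exactly one derivative short.

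What the paper does instead is to keep $\partial_{x_i}$ on the $\L w$ side and exploit the \emph{quadratic} structure (the same $w$ on both sides). One commutes $\partial_{x_i}$ past $B^*$ (the commutator is lower order because the coefficients of $B$ are $W^{1,\infty}$, which is exactly where $l=1$ enters), arriving at $\int B(gw)\cdot\partial_{x_i}Bw\,\d x$. Expanding $B(gw)=g\,Bw+S$ by the Leibniz rule, the dangerous leading term becomes
\[
\int g\,Bw\cdot\partial_{x_i}Bw\,\d x=\int g\,\partial_{x_i}\tfrac{|Bw|^2}{2}\,\d x=-\int\partial_{x_i}g\,\tfrac{|Bw|^2}{2}\,\d x,
\]
a perfect-derivative trick that moves the extra derivative onto $g$ at no cost. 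The remainder $S$ already carries at least one derivative on $g$, so $S\in H^1$ pairs legitimately against $\partial_{x_i}Bw\in H^{-1}$. Your approach, by integrating by parts before splitting $B(gw)$, loses access to this cancellation.
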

\begin{proof}
For simplicity we assume $B=a_mD^m$ (additional lower derivatives are treated analogously).
It is straightforward to see $\partial_{x_i}B^*h=B^*\partial_{x_i}h+(D^m)^*[(\partial_{x_i}a_m^*)h]$ for any function $h$,
where $a_m^*$ is the adjoint of the tensor $a_m$.
Thus, for $g\in H^{m+1}(\T^d)$ we have
\begin{equation*}
\int_{\T^d}\!\!gw^T\partial_{x_i\!}\L w\,\d x
\!=\!\int_{\T^d}\!\!gw\cdot\partial_{x_i\!}(\bar\L w)\,\d x
+\!\int_{\T^d}\!\! \partial_{x_i\!}Bw\cdot B(gw)+D^m(gw)(\partial_{x_i\!}a_m^*)\cdot Bw\,\d x.
\end{equation*}
The first term is clearly bounded in absolute value by a constant times $\|gw\|_{H^m}\|\partial_{x_i}\bar\L w\|_{H^{-m}}\lesssim\|w\|_{H^m}^2\|g\|_{H^m}$ (recall that $H^m$ is a Banach algebra).
Furthermore, by the product rule $B(gw)$ equals $(Bw)g$ plus a sum $S$ of products of derivatives of $w$ and $g$, each summand with $m$ derivatives in total, at least one of them on $g$.
Thus,
\begin{align*}
\int_{\T^d} \partial_{x_i}Bw\cdot B(gw)\,\d x
&=\int_{\T^d} g\,\partial_{x_i}\tfrac{|Bw|^2}2\,\d x
+\int_{\T^d}S\cdot\partial_{x_i}Bw\,\d x\\
&=-\int_{\T^d}\tfrac{|Bw|^2}2\partial_{x_i}g\,\d x
+\int_{\T^d}S\cdot\partial_{x_i}Bw\,\d x.
\end{align*}
The first integral is bounded by $\|Bw\|_{H^0}^2\|\partial_{x_i}g\|_{W^{0,\infty}}\lesssim\|w\|_{H^m}^2\|g\|_{H^{m+1}}$,
the second by $\|\partial_{x_i}Bw\|_{H^{-1}}\|S\|_{H^1}\lesssim\|w\|_{H^m}^2\|g\|_{H^{m+1}}$ (using \cref{thm:BeHo21}).
Similarly,
\begin{multline*}
\left|\int_{\T^d}D^m(gw)(\partial_{x_i}a_m^*)\cdot Bw\,\d x\right|
\leq\|Bw\|_{H^0}\|\partial_{x_i}a_m\|_{W^{0,\infty}}\|D^m(gw)\|_{H^0}\\
\lesssim\|w\|_{H^m}\|gw\|_{H^m}
\lesssim\|w\|_{H^m}^2\|g\|_{H^m}.
\end{multline*}
Hence, $\left|\int_{\T^d} gw^T\partial_{x_i}\L w\,\d x\right|\lesssim\|w\|_{H^m}^2\|g\|_{H^{m+1}}$ for all $g\in H^{m+1}(\T^d)$ and all coordinates $i$
so that the result follows from $w^TD\L w=\sum_{i=1}^dw^T\partial_{x_i}\L w$.
\end{proof}

A direct consequence is a regularity result for the right-hand side of the geodesic ODE \eqref{eqn:EPDiff}.

\begin{corollary}[$H^{m-1}$-regularity of velocity change]\label{thm:rhsRegularity}
Let $m\!>\!\frac d2\!+\!1$, $k\!\geq\!0$, let $\L$ satisfy the conditions from \cref{thm:smoothMetric} for $l=1$, and let $\rho$ satisfy \eqref{eqn:EPDiff}.
Then $\|\dot\rho_t\|_{H^{k-m-1}}\lesssim\|\rho_t\|_{H^{-m}}\|\rho_t\|_{H^{k-m}}$ or equivalently $\|\dot v_t\|_{H^{m-1+k}}\lesssim\|v_t\|_{H^{m}}\|v_t\|_{H^{m+k}}$.
\end{corollary}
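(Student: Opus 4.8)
The plan is to split the right-hand side of \eqref{eqn:EPDiff} as $\dot\rho_t=-\div(\rho_t\otimes v_t)-(Dv_t)^T\rho_t$ and to estimate the two summands separately in $H^{k-m-1}(\T^d;\R^d)$. For the first summand it suffices to control the product $\rho_t\otimes v_t$ in $H^{k-m}$: since $m>\frac d2$ and $|k-m|\le m$, the space $H^m(\T^d)$ acts by multiplication on $H^{k-m}(\T^d)$ (cf.\ \cite[Thm.\,6.1]{BeHo21}), so $\|\rho_t\otimes v_t\|_{H^{k-m}}\lesssim\|\rho_t\|_{H^{k-m}}\|v_t\|_{H^m}$ and hence $\|\div(\rho_t\otimes v_t)\|_{H^{k-m-1}}\lesssim\|\rho_t\|_{H^{k-m}}\|v_t\|_{H^m}$.

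For the second summand $(Dv_t)^T\rho_t$ I would distinguish two cases. If $k\ge1$, then $Dv_t\in H^{m-1}(\T^d;\R^{d\times d})$, and since $m-1>\frac d2$ and (in the relevant range $k\le m$) also $|k-m|\le m-1$, multiplication by $Dv_t$ maps $H^{k-m}(\T^d;\R^d)$ continuously into itself; this gives $\|(Dv_t)^T\rho_t\|_{H^{k-m-1}}\le\|(Dv_t)^T\rho_t\|_{H^{k-m}}\lesssim\|v_t\|_{H^m}\|\rho_t\|_{H^{k-m}}$ directly. If $k=0$ this breaks down --- the Sobolev orders of $Dv_t\in H^{m-1}$ and $\rho_t\in H^{-m}$ sum to $-1<0$, so their product is controlled by no Sobolev multiplication estimate --- and this is the single genuinely delicate point. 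Here I would instead test against smooth $g:\T^d\to\R^d$, use $\langle(Dv_t)^T\rho_t,g\rangle=\langle\rho_t,(Dv_t)g\rangle$, and integrate by parts coordinatewise to obtain
\begin{equation*}
\langle(Dv_t)^T\rho_t,g\rangle=-\sum_{i=1}^d\int_{\T^d}g_i\,(v_t^T\partial_{x_i}\L v_t)\,\d x-\langle\rho_t,(\div g)\,v_t\rangle .
\end{equation*}
The first sum is controlled by \cref{thm:productsInB}, or more precisely by the coordinatewise bound $\bigl|\int_{\T^d}g_i\,(v_t^T\partial_{x_i}\L v_t)\,\d x\bigr|\lesssim\|v_t\|_{H^m}^2\|g_i\|_{H^{m+1}}$ established in its proof; this is exactly the estimate that makes the rough momentum $\rho_t=\L v_t$ admissible against a single derivative, and it is where the $B^*B$ structure of $\L$ (permitting one derivative to be shuffled onto a lower-order factor) is used. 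For the second term one uses that $H^m(\T^d)$ is a Banach algebra, so $\|(\div g)\,v_t\|_{H^m}\lesssim\|g\|_{H^{m+1}}\|v_t\|_{H^m}$ and therefore $|\langle\rho_t,(\div g)v_t\rangle|\lesssim\|\rho_t\|_{H^{-m}}\|v_t\|_{H^m}\|g\|_{H^{m+1}}$. Taking the supremum over smooth $g$ with $\|g\|_{H^{m+1}}\le1$ and using density then yields $\|(Dv_t)^T\rho_t\|_{H^{-m-1}}\lesssim\|v_t\|_{H^m}\bigl(\|v_t\|_{H^m}+\|\rho_t\|_{H^{-m}}\bigr)$.

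Finally I would pass between the two formulations of the claim. Since $\L$ is the Riesz isomorphism associated with the norm $\NHm\cdot$ on $H^m(\T^d;\R^d)$, one has $\|\rho_t\|_{H^{-m}}=\NHm{v_t}$, which is equivalent to $\|v_t\|_{H^m}$; likewise $\|\rho_t\|_{H^{k-m}}$ is equivalent to $\|v_t\|_{H^{m+k}}$ (for $\L$ a Fourier multiplier, as assumed from now on, or more generally an elliptic isomorphism of the relevant Sobolev scale). Substituting these equivalences into the estimates above turns them into the two stated inequalities $\|\dot\rho_t\|_{H^{k-m-1}}\lesssim\|\rho_t\|_{H^{-m}}\|\rho_t\|_{H^{k-m}}$ and $\|\dot v_t\|_{H^{m-1+k}}\lesssim\|v_t\|_{H^m}\|v_t\|_{H^{m+k}}$. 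The only real obstacle is thus the quadratic term $(Dv_t)^T\L v_t$ at $k=0$; once \cref{thm:productsInB} is available, everything else is a routine combination of Moser-type Sobolev product estimates from \cite[Thm.\,6.1]{BeHo21}.
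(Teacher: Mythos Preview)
Your proposal is correct and follows essentially the same route as the paper. The paper tests $\dot\rho_t$ against $w\in H^{m+1-k}$ via the weak form $\langle\dot\rho_t,w\rangle=\langle\rho_t,[v_t,w]\rangle$, then integrates $-\langle\rho_t,Dv_t\,w\rangle$ by parts once to obtain the three terms $\langle\rho_t,Dw\,v_t\rangle$, $\langle\rho_t,v_t\div w\rangle$, $\langle v_t^TD\rho_t,w\rangle$, and bounds each by $\|\rho_t\|_{H^{k-m}}\|v_t\|_{H^m}\|w\|_{H^{m+1-k}}$, invoking \cref{thm:productsInB} for the last term precisely when $k=0$; this is exactly your decomposition and your $k=0$ integration-by-parts, just organized through the Lie bracket rather than the explicit split $\div(\rho_t\otimes v_t)+(Dv_t)^T\rho_t$, and without your separate shortcut for $k\ge1$.
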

\begin{proof}
For $w\in H^{m+1-k}(\T^d;\R^d)$ we have
\begin{multline*}
\langle\dot\rho_t,w\rangle
=\langle\rho_t,[v_t,w]\rangle
=\langle\rho_t,Dw\,v_t\rangle-\int_{\T^d}\rho_t^TDv_t\,w\,\d x\\
=\langle\rho_t,Dw\,v_t\rangle+\langle\rho_t,v_t\div w\rangle+\langle v_t^TD\rho_t,w\rangle,
\end{multline*}
of which all summands can be bounded by $\|\rho_t\|_{H^{k-m}}\|v_t\|_{H^m}\|w\|_{H^{m+1-k}}$
(using \cref{thm:BeHo21} and for $k=0$ \cref{thm:productsInB} in the last summand).
\end{proof}

The convergence of the spectral discretization now follows via a typical Gronwall type ODE argument.

\begin{theorem}[Convergence of geodesic approximation]\label{thm:convergence}
Let $m>\frac d2+1$ and $\L$ be a Fourier multiplier satisfying the conditions from \cref{thm:smoothMetric} for $l=2$ (so that geodesics are well-posed in $\DiffSobId^m$).
Let $v_t$ be the velocity of a geodesic in $\DiffSobId^m$ and $V_t$ the solution of \eqref{eqn:LDDMMapproximateLie} with $V_0=\trunc v_0$.
\begin{enumerate}
\item
If $v_0\in H^{m+1}(\T^d;\R^d)$, then $\|V_t-v_t\|_{H^m}\to_{R\to\infty}0$ uniformly in $t\in[0,1]$.
\item
If $v_0\in H^{m+k}(\T^d;\R^d)$ for $k>1$, then
\begin{equation*}
\|V_t-v_t\|_{H^m}\leq Ce^{C\|v_0\|_{H^{m+1}}e^{C\|v_0\|_{H^m}}}\|v_0\|_{H^{m+k}}R^{1-k}
\end{equation*}
for all $t\in[0,1]$ with a constant $C>0$ independent of $v_0$.
\end{enumerate}
\end{theorem}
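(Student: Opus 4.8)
The plan is to run a Gronwall argument on the $H^m$-distance between the exact velocity $v_t$ and the discrete velocity $V_t$. First I would set $e_t = V_t - \trunc v_t$ and compare the two ODEs: $v_t$ satisfies the exact (strong) EPDiff equation $\dot\rho_t = -\adjointDer_{v_t}^*\rho_t$, which holds rigorously by \cref{thm:regularityPreservationLDDMM} since $v_0 \in H^{m+k}$ with $k \geq 1$; applying $\trunc$ to both sides gives $\tfrac{\d}{\d t}\trunc\rho_t = -\trunc\adjointDer_{v_t}^*\rho_t$. The discrete equation \eqref{eqn:LDDMMdiscrete} reads $\dot P_t = -\trunc\adjointDer_{V_t}^*P_t$. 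Subtracting, and using that $\L$ is a Fourier multiplier so it commutes with $\trunc$ and $\trunc\Riesz\trunc = \trunc\Riesz$, I would pair $\tfrac{\d}{\d t}(P_t - \trunc\rho_t)$ with $e_t = V_t - \trunc v_t$ (note $\L e_t = P_t - \trunc\rho_t$) to get
\begin{equation*}
\frac{\d}{\d t}\frac{\NHm{e_t}^2}{2}
= \langle \dot P_t - \tfrac{\d}{\d t}\trunc\rho_t, e_t\rangle
= -\langle \adjointDer_{V_t}^*P_t - \adjointDer_{v_t}^*\rho_t, e_t\rangle,
\end{equation*}
where the $\trunc$ on the right-hand side can be dropped because $e_t \in \truncSpace$.

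Next I would decompose the right-hand side into an antisymmetric (energy-conserving) part and a remainder. Writing $\adjointDer_{V_t}^*P_t - \adjointDer_{v_t}^*\rho_t = (\adjointDer_{V_t}^*P_t - \adjointDer_{V_t}^*\trunc\rho_t) + (\adjointDer_{V_t}^*\trunc\rho_t - \adjointDer_{v_t}^*\rho_t)$, the first piece paired with $e_t$ gives $\langle \L e_t, \adjointDer_{V_t} e_t\rangle = -\langle \L e_t, [V_t, e_t]\rangle$, which vanishes by the antisymmetry argument of \cref{thm:normPreservation} (i.e.\ $\langle\L V, [V,V]\rangle$-type cancellation — more precisely one splits $\adjointDer_{V_t}^*P_t - \adjointDer_{V_t}^*\trunc\rho_t$ against $e_t$ and the genuinely bilinear-in-$e_t$ term cancels). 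The surviving ``consistency'' term $\langle \adjointDer_{v_t}^*\rho_t - \adjointDer_{V_t}^*\trunc\rho_t, e_t\rangle$ is then split further as $\langle \adjointDer_{v_t}^*\rho_t - \adjointDer_{\trunc v_t}^*\trunc\rho_t, e_t\rangle + \langle (\adjointDer_{\trunc v_t}^* - \adjointDer_{V_t}^*)\trunc\rho_t, e_t\rangle$; the first summand is a pure truncation error and the second is bilinear, linear in $e_t$. To estimate these I would pass to the $H^{m-1}$ form of $\adjointDer^*$ (available since $\dot\rho$, hence these quantities, lie in $H^{-m+1}$ thanks to \cref{thm:regularityPreservationLDDMM} and \cref{thm:rhsRegularity}), pair against $e_t \in H^{m+1}$ (note $e_t \in \truncSpace$ so its $H^{m+1}$-norm is controlled by $R$ times its $H^m$-norm via \eqref{eqn:normEstimateTruncation}), and use the quadratic estimate \cref{thm:rhsRegularity} / \cref{thm:productsInB}: this yields terms like $\|\adjointDer_{v_t}^*\rho_t - \adjointDer_{\trunc v_t}^*\trunc\rho_t\|_{H^{-m-1}}\cdot R\,\NHm{e_t}$ and $C(\NHm{v_t}+\NHm{V_t})\,\|(\Id-\trunc)v_t\|_{H^m}\,\NHm{e_t}\cdot R$-type contributions, plus a $C\NHm{v_t}\NHm{e_t}^2$ self-term. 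Crucially the bilinear estimates must be linear in $e_t$ so Gronwall closes; the $R$ factor from pushing $e_t$ up to $H^{m+1}$ is compensated by a consistency error of size $R^{-k}\|v_0\|_{H^{m+k}}$ (using \cref{thm:consistencyTruncatedFourier} and the regularity $v_t \in H^{m+k}$ from \cref{thm:regularityPreservationLDDMM}, together with the norm bounds of \cref{thm:normEstDeform} to control $\|v_t\|_{H^{m+k}}$ by $\|v_0\|_{H^{m+k}}$ times $\exp(C\|v_0\|_{H^m})$), leaving a net $R^{1-k}$.

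Having reduced to a differential inequality of the shape
\begin{equation*}
\frac{\d}{\d t}\NHm{e_t} \leq C\NHm{v_t}\,\NHm{e_t} + C R\,\delta_R(t),
\end{equation*}
with $\delta_R(t)$ the consistency error satisfying $\sup_t \delta_R(t) \lesssim \exp(C\|v_0\|_{H^{m+1}}\exp(C\|v_0\|_{H^m}))\|v_0\|_{H^{m+k}}R^{-k}$ for $k>1$ (and $\delta_R(t) = o(1)$ uniformly for $k=1$ by \cref{thm:FourierDecay} applied to $t\mapsto v_t \in H^{m+1}$ and the continuity from \cref{thm:regularityPreservationLDDMM}), and observing $\NHm{v_t} = \NHm{v_0}$ is conserved along geodesics, Gronwall's inequality with initial error $\NHm{e_0} = 0$ gives $\NHm{e_t} \leq C R\int_0^t \delta_R\, e^{C\NHm{v_0}(t-s)}\d s$. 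Adding back $\|(\Id-\trunc)v_t\|_{H^m}$ via the triangle inequality $\|V_t - v_t\|_{H^m} \leq \NHm{e_t} + C\|(\Id-\trunc)v_t\|_{H^m}$ yields the claimed bounds: $R^{1-k}$ for $k>1$, and $o(1)$ uniformly for $k=1$. The double exponential $e^{C\|v_0\|_{H^{m+1}}e^{C\|v_0\|_{H^m}}}$ arises because controlling $\delta_R$ requires bounding $\|v_t\|_{H^{m+1}}$, which via \cref{thm:normEstDeform} and the $\adjointMap^*$-representation of $\rho_t$ costs $\|v_0\|_{H^{m+1}}\exp(C\|v_0\|_{H^m})$, and this then sits inside the Gronwall exponential.

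I expect the main obstacle to be the careful bookkeeping of the bilinear error term: one must show that $\langle \adjointDer_{V_t}^*P_t - \adjointDer_{v_t}^*\rho_t, e_t\rangle$, after removing the exactly-cancelling antisymmetric piece, is genuinely \emph{linear} (not quadratic) in $e_t$ while staying integrable against the $H^{-m-1}$/$H^{m+1}$ duality — this is where \cref{thm:productsInB} is essential for the borderline term $v_t^T D\L e_t$, and where one must be careful that the ``extra derivative'' landing on $e_t \in \truncSpace$ only costs a factor $R$ rather than destroying the estimate. A secondary subtlety is justifying that all the pairings make sense: one needs $\rho_t, \dot\rho_t \in H^{-m+k}$ resp.\ $H^{-m-1+k}$ (from \cref{thm:regularityPreservationLDDMM}, \cref{thm:rhsRegularity}) and must check that the Fourier-multiplier assumption on $\L$ genuinely makes $\trunc$ commute with everything so that the discrete solution stays a clean Galerkin-type projection.
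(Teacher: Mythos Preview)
Your overall strategy---Gronwall on the $H^m$-error, exploit the antisymmetry $[e_t,e_t]=0$, control the critical quadratic term via \cref{thm:productsInB}, and feed in the regularity \cref{thm:regularityPreservationLDDMM}---is exactly the paper's. Your choice $e_t=V_t-\trunc v_t$ is just the paper's $\trunc e_t$, so even organizationally the two arguments coincide.

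There is, however, a genuine bookkeeping error in your decomposition that propagates into the wrong Gronwall coefficient. In your split, piece (A) $=\langle\adjointDer_{V_t}^*\L e_t,e_t\rangle$ does \emph{not} vanish: writing $V_t=\trunc v_t+e_t$ one has $\langle\L e_t,[V_t,e_t]\rangle=\IPHm{[\trunc v_t,e_t]}{e_t}$, and this is precisely the quadratic self-term. Conversely, your piece (B2) $=\langle(\adjointDer_{\trunc v_t}^*-\adjointDer_{V_t}^*)\trunc\rho_t,e_t\rangle=\langle[e_t,e_t],\trunc\rho_t\rangle$ is identically zero, not ``linear in $e_t$''. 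So the self-term sits in (A), not in the residual of (B). More importantly, estimating $\IPHm{[\trunc v_t,e_t]}{e_t}$ via the integration-by-parts identity and \cref{thm:productsInB} (as the paper does) forces one derivative onto $v_t$: the bound is $\lesssim\|v_t\|_{H^{m+1}}\NHm{e_t}^2$, \emph{not} $\NHm{v_t}\NHm{e_t}^2$ as you wrote. Consequently your differential inequality should read
\begin{equation*}
\tfrac{\d}{\d t}\NHm{e_t}\lesssim\|v_t\|_{H^{m+1}}\NHm{e_t}+\text{(consistency)},
\end{equation*}
and the double exponential arises because the Gronwall factor is $\exp\!\big(C\!\int_0^1\|v_t\|_{H^{m+1}}\,\d t\big)$ with $\|v_t\|_{H^{m+1}}\lesssim\|v_0\|_{H^{m+1}}e^{C\|v_0\|_{H^m}}$ from \cref{thm:normEstDeform}\eqref{enm:adjointMapEstimate}---not, as you wrote, because the consistency term $\delta_R$ already carries a double exponential and then ``sits inside'' a single-exponential Gronwall. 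With this correction your argument goes through and matches the paper's.
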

\begin{proof}
Extend $\widetilde\adjointDer$ to $H^m(\T^d;\R^d)\times H^m(\T^d;\R^d)$ by $\widetilde\adjointDer=\trunc\adjointDer$, which is still antisymmetric.
Let $e_t=V_t-v_t$.
We have
\begin{equation*}
\dot\rho_t=-\adjointDer_{v_t}^*\rho_t,\qquad
\dot P_t=-\trunc\widetilde\adjointDer_{V_t}^*P_t.
\end{equation*}
We obtain
\begin{multline*}
\frac\d{\d t}\frac{\NHm{e_t}^2}2
=\langle e_t,\L\dot e_t\rangle
=\langle\trunc e_t,\trunc\L\dot e_t\rangle
+\langle(\Id-\trunc)e_t,(\Id-\trunc)\L\dot e_t\rangle\\
=\langle\trunc e_t,\L\dot e_t\rangle
+\langle(\Id-\trunc)v_t,(\Id-\trunc)\dot\rho_t\rangle,
\end{multline*}
where we exploited $(\Id-\trunc)e_t=-(\Id-\trunc)v_t$.
We estimate the first term as follows,
\begin{multline*}
\langle\trunc e_t,\L\dot e_t\rangle
=\langle\trunc e_t,\adjointDer_{v_t}^*\rho_t-\trunc\widetilde\adjointDer_{V_t}^*P_t\rangle\\
=\langle[V_t,\trunc e_t]_R,P_t\rangle-\langle[v_t,\trunc e_t],\rho_t\rangle
=\langle[V_t-v_t,\trunc e_t],P_t\rangle+\langle[v_t,\trunc e_t],P_t-\rho_t\rangle\\
=-\langle[(\Id-\trunc)v_t,\trunc e_t],P_t\rangle+\IPHm{[v_t,\trunc e_t]}{\trunc e_t}-\IPHm{[v_t,\trunc e_t]}{(\Id-\trunc)v_t},
\end{multline*}
exploiting $[\trunc e_t,\trunc e_t]=0$ and therefore $[e_t,\trunc e_t]=[(\Id-\trunc)e_t,\trunc e_t]=-[(\Id-\trunc)v_t,\trunc e_t]$.
Using that $H^m$ is a Banach algebra as well as \cref{thm:consistencyTruncatedFourier} and \eqref{eqn:normEstimateTruncation} we next estimate
\begin{align*}
\|[(\Id\!-\!\trunc)v_t,\trunc e_t]\|_{H^m}
&=\|D\trunc e_t\,(\Id\!-\!\trunc)v_t-D(\Id\!-\!\trunc)v_t\,\trunc e_t]\|_{H^m}\\
&\lesssim\|(\Id\!-\!\trunc)v_t\|_{H^{m}}\|\trunc e_t\|_{H^{m+1}}+\|(\Id\!-\!\trunc)v_t\|_{H^{m+1}}\|\trunc e_t\|_{H^m}\\
&\lesssim\tfrac{\|(\Id\!-\!\trunc)v_t\|_{H^{m+1}}}RR\|\trunc e_t\|_{H^{m}}+\|(\Id\!-\!\trunc)v_t\|_{H^{m+1}}\|\trunc e_t\|_{H^{m}}\\
&\leq2\|(\Id\!-\!\trunc)v_t\|_{H^{m+1}}\|\trunc e_t\|_{H^{m}}.
\end{align*}
Therefore, exploiting \cref{thm:normPreservation} we have
\begin{multline*}
-\langle[(\Id\!-\!\trunc)v_t,\trunc e_t],P_t\rangle
\leq\|[(\Id\!-\!\trunc)v_t,\trunc e_t]\|_{H^m}\|P_t\|_{H^{-m}}\\
\lesssim\|(\Id\!-\!\trunc)v_t\|_{H^{m+1}}\NHm{e_t}\NHm{V_t}
=\NHm{e_t}\NHm{V_0}\|(\Id\!-\!\trunc)v_t\|_{H^{m+1}}\\
\leq\NHm{e_t}\NHm{v_0}\|(\Id\!-\!\trunc)v_t\|_{H^{m+1}}.
\end{multline*}
Furthermore, an integration by parts yields
\begin{align*}
\IPHm{[q,u]}{w}
&=\int_{\T^d} (\L w)^T(Du\,q-Dq\,u)\,\d x\\
&=-\int_{\T^d} \L w\cdot u\,\div q+u^TD(\L w)q+(\L w)^TDq\,u\,\d x
\end{align*}
for any $u\in H^m(\T^d;\R^d)$ and $q,w\in H^{m+1}(\T^d;\R^d)$.
Therefore we have
\begin{multline*}
\IPHm{[v_t,\trunc e_t]}{\trunc e_t}\\
=-\!\int_{\T^d}\!\!\!\L\trunc e_t\!\cdot\!\trunc e_t\div v_t\!+\!\trunc e_t^TD(\L\trunc e_t)v_t\!+\!(\L\trunc e_t)^T\!Dv_t\trunc e_t\,\d x
\lesssim\NHm{e_t}^2\|v_t\|_{H^{m+1}}
\end{multline*}
by \cref{thm:productsInB} as well as
\begin{align*}
&-\IPHm{[v_t,\trunc e_t]}{(\Id-\trunc)v_t}\\
&=\int_{\T^d}\!\!\!\L(\Id\!-\!\trunc)v_t\!\cdot\! \trunc e_t\div v_t\!+\!\trunc e_t^TD(\L(\Id\!-\!\trunc)v_t)v_t\!+\!(\L(\Id\!-\!\trunc)v_t)^T\!Dv_t\trunc e_t\,\d x\\
&\lesssim\NHm{e_t}\|v_t\|_{H^{m+1}}\|(\Id-\trunc)v_t\|_{H^{m+1}}.
\end{align*}
Summarizing,
\begin{multline}\label{eqn:ODEestimate}
\frac\d{\d t}\frac{\NHm{e_t}^2}2
\lesssim\NHm{e_t}\NHm{v_0}\|(\Id\!-\!\trunc)v_t\|_{H^{m+1}}
+\NHm{e_t}^2\|v_t\|_{H^{m+1}}\\
+\NHm{e_t}\|v_t\|_{H^{m+1}}\|(\Id-\trunc)v_t\|_{H^{m+1}}
+\|(\Id-\trunc)v_t\|_{H^{m+1}}\|(\Id-\trunc)\dot v_t\|_{H^{m-1}},
\end{multline}
where the last summand is an upper bound for $\langle(\Id-\trunc)v_t,(\Id-\trunc)\dot\rho_t\rangle$.
(Note that if we had substituted $\langle(\Id-\trunc)v_t,(\Id-\trunc)\dot\rho_t\rangle=\langle(\Id-\trunc)v_t,-\adjointDer_{v_t}^*\rho_t\rangle=\langle[v_t,(\Id-\trunc)v_t],\rho_t\rangle$,
the smallness of the term from two Fourier truncations rather than one would no longer be obvious.)
By assumption and \cref{thm:regularityPreservationLDDMM}, $\|v_t\|_{H^{m+1}}$ is uniformly bounded.
Furthermore, from \cref{thm:rhsRegularity} we see
\begin{equation*}
\|\dot v_t\|_{H^{m-1}}
\lesssim\NHm{v_t}^2
=\NHm{v_0}^2.
\end{equation*}
Since $t\mapsto v_t\in H^{m+1}(\T^d;\R^d)$ is continuous by \cref{thm:regularityPreservationLDDMM},
$\|(\Id-\trunc)v_t\|_{H^{m+1}}$ tends to zero uniformly in $t$ as $R\to\infty$ due to \cref{thm:FourierDecay}.
Since also $e_0\to0$ in $H^m(\T^d;\R^d)$, the Bihari--LaSalle inequality implies $e_t\to0$ as $R\to\infty$, uniformly in $t$, which proves the first claim.

For the second claim 
we collect the occurring powers of $R$; we start with
\begin{equation*}
\NHm{e_0}
=\NHm{(\Id-\trunc)v_0}
\lesssim R^{-k}\|v_0\|_{H^{m+k}}.
\end{equation*}
Furthermore, denoting by $\phi_t$ the flow of $v_t$, we have
\begin{multline*}
\|(\Id-\trunc)v_t\|_{H^{m+1}}
\lesssim R^{1-k}\|v_t\|_{H^{m+k}}
\lesssim R^{1-k}\|\rho_t\|_{H^{-m+k}}
=R^{1-k}\|\adjointMap_{\phi_t^{-1}}^*\rho_0\|_{H^{-m+k}}\\
\leq R^{1-k}\exp(C\|v\|_{L^1(I;H^m)})\|\rho_0\|_{H^{-m+k}}
=R^{1-k}\exp(C\NHm{v_0})\|v_0\|_{H^{m+k}},
\end{multline*}
where in the first step we used \cref{thm:consistencyTruncatedFourier},
in the second $\|\rho_t\|_{H^{-m+k}}=\|\L v_t\|_{H^{-m+k}}\lesssim\|v_t\|_{H^{m+k}}$,
in the third \cref{thm:regularityPreservationLDDMM},
in the fourth \cref{thm:normEstDeform}\eqref{enm:adjointMapEstimate} to estimate the operator norm of $\adjointMap_{\phi_t^{-1}}$,
and in the last step we used that the norm $\NHm{v_t}$ of the velocity is constant along a geodesic.
Similarly,
\begin{multline*}
\|(\Id-\trunc)\dot v_t\|_{H^{m-1}}
\lesssim R^{-k}\|\dot v_t\|_{H^{m+k-1}}
\lesssim R^{-k}\|\rho_t\|_{H^{-m+k}}\NHm{v_t}\\
\leq R^{-k}\exp(C\NHm{v_0})\|v_0\|_{H^{m+k}}\NHm{v_t}
=R^{-k}\exp(C\NHm{v_0})\|v_0\|_{H^{m+k}}\NHm{v_0}
\end{multline*}
using \cref{thm:rhsRegularity} in the second step. In summary, \eqref{eqn:ODEestimate} turns into
\begin{align*}
\frac\d{\d t}\frac{\NHm{e_t}^2}2
&\lesssim\|v_0\|_{H^{m+1}}\exp(2C\NHm{v_0})\cdot\\
&\quad\left(\NHm{e_t}^2+R^{1-k}\|v_0\|_{H^{m+k}}\sqrt{\NHm{e_t}^2}+R^{-2k+1}\tfrac{\|v_0\|_{H^{m}}\|v_0\|_{H^{m+k}}^2}{\|v_0\|_{H^{m+1}}}\right)\\
&\lesssim\|v_0\|_{H^{m+1}}\exp(2C\NHm{v_0})\left(\NHm{e_t}^2+R^{2-2k}\|v_0\|_{H^{m+k}}^2\right)
\end{align*}
via Young's inequality,
which together with the estimate for $\NHm{e_0}$ yields the desired estimate via the Gronwall inequality.
\end{proof}

For $v_0$ of generic $H^m$-regularity one does not get convergence of the scheme.
However, using the above rates one can devise a new scheme that converges even for $v_0\in H^m(\T^d;\R^d)$ and also gives a (logarithmically slow) convergence rate for $v_0\in H^{m+1}(\T^d;\R^d)$.

\begin{corollary}[Convergent scheme for generic regularity]\label{thm:convergenceGenericRegularity}
Let the conditions of \cref{thm:convergence} hold, and let $r\in o(\log R)$.
Given $v_0\in H^m(\T^d;\R^d)$, let $V^r\in\trunc H^{m}(\T^d;\R^d)$ denote the solution of \eqref{eqn:LDDMMapproximateLie} for initial condition $V^r_0=\trunc[r]v_0$.
Then $\|V^r_t-v_t\|_{H^m}\to0$ uniformly in $t\in[0,1]$ as $R\to\infty$.

Moreover, if $v_0\in H^{m+1}(\T^d;\R^d)$ and the metric \eqref{eqn:metric} is three times differentiable,
then $\|V^r_t-v_t\|_{H^m}\lesssim C(\|v_0\|_{H^{m+1}})r$ for all $t\in[0,1]$ for a constant depending only on $\|v_0\|_{H^{m+1}}$.
\end{corollary}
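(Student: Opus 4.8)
The plan is to bridge $V^r_t$ and $v_t$ by the velocity $v^r_t$ of the \emph{exact} geodesic in $\DiffSobId^m$ with initial velocity $\trunc[r]v_0$. The virtue of this intermediate object is that $\trunc[r]v_0$, being bandlimited, is $C^\infty$ and hence lies in $H^{m+j}(\T^d;\R^d)$ for every $j$; therefore \cref{thm:convergence} applies verbatim with $\trunc[r]v_0$ in the role of $v_0$ (note $\trunc(\trunc[r]v_0)=\trunc[r]v_0$ as soon as $R\ge r$, so $V^r$ is exactly the approximation the theorem produces for that initial datum). The triangle inequality then splits the error into a discretization part and a consistency part,
\begin{equation*}
\|V^r_t-v_t\|_{H^m}\le\|V^r_t-v^r_t\|_{H^m}+\|v^r_t-v_t\|_{H^m},
\end{equation*}
which I would treat separately.

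For the discretization part I would invoke \cref{thm:convergence} with $k=2$,
\begin{equation*}
\|V^r_t-v^r_t\|_{H^m}\le Ce^{C\|\trunc[r]v_0\|_{H^{m+1}}e^{C\|\trunc[r]v_0\|_{H^m}}}\|\trunc[r]v_0\|_{H^{m+2}}R^{-1}\qquad\text{for all }t\in[0,1],
\end{equation*}
and control the norms of $\trunc[r]v_0$ by the inverse estimate \eqref{eqn:normEstimateTruncation}: in general $\|\trunc[r]v_0\|_{H^m}\le\|v_0\|_{H^m}$, $\|\trunc[r]v_0\|_{H^{m+1}}\lesssim r\|v_0\|_{H^m}$, $\|\trunc[r]v_0\|_{H^{m+2}}\lesssim r^2\|v_0\|_{H^m}$, while for $v_0\in H^{m+1}(\T^d;\R^d)$ one has the sharper $\|\trunc[r]v_0\|_{H^{m+1}}\le\|v_0\|_{H^{m+1}}$ and $\|\trunc[r]v_0\|_{H^{m+2}}\lesssim r\|v_0\|_{H^{m+1}}$. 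Substituting, the exponential prefactor is $\le\exp(Cr)$ with $C$ now also depending on $\|v_0\|_{H^m}$ (resp. it is just a constant $C(\|v_0\|_{H^{m+1}})$), so the discretization part is $\lesssim\exp(Cr)\,r^2\,\|v_0\|_{H^m}\,R^{-1}$ in general and $\lesssim C(\|v_0\|_{H^{m+1}})\,rR^{-1}$ in the regular case. This is precisely where the hypothesis $r\in o(\log R)$ is used: it forces $\exp(Cr)\,r^2=\exp(o(\log R))=R^{o(1)}$, whence the discretization part tends to $0$ uniformly in $t$ as $R\to\infty$.

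For the consistency part $\|v^r_t-v_t\|_{H^m}$ I would argue differently in the two regimes. In the generic case I would use continuous dependence of geodesics on the initial velocity: twofold differentiability of the metric makes the second-order geodesic equation on $\DiffSobId^m$ have a $C^1$ right-hand side, and since geodesics are defined on all of $[0,1]$ its flow is continuous on $[0,1]\times T\DiffSobId^m$; projecting onto the velocity this gives a continuous solution map $H^m(\T^d;\R^d)\to C([0,1];H^m(\T^d;\R^d))$, so $\trunc[r]v_0\to v_0$ in $H^m$ (convergence of Fourier series) yields $v^r_t\to v_t$ uniformly in $t$ for $r=r(R)\to\infty$, which together with the first part proves uniform convergence. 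For $v_0\in H^{m+1}(\T^d;\R^d)$ I would instead get a rate by a Gronwall argument structurally identical to the one proving \cref{thm:convergence}: both $v_t$ and $v^r_t$ have $H^{m+1}$ initial data, so by \cref{thm:regularityPreservationLDDMM} both satisfy the strong equation \eqref{eqn:EPDiff} and, via $\rho_t=\adjointMap_{\phi_t^{-1}}^*\rho_0$, \cref{thm:normEstDeform}, and constancy of the metric speed, stay bounded in $H^{m+1}$ by a constant depending only on $\|v_0\|_{H^{m+1}}$; writing $f_t=v^r_t-v_t$, antisymmetry of $\adjointDer$ and $[f_t,f_t]=0$ give $\frac\d{\d t}\frac12\NHm{f_t}^2=\IPHm{[v_t,f_t]}{f_t}$, the integration-by-parts identity from the proof of \cref{thm:convergence} together with \cref{thm:productsInB} bound the right-hand side by $C(\|v_0\|_{H^{m+1}})\NHm{f_t}^2$, and Gronwall with $\NHm{f_0}=\NHm{(\Id-\trunc[r])v_0}\lesssim r^{-1}\|v_0\|_{H^{m+1}}$ (\cref{thm:consistencyTruncatedFourier}) yields $\|v^r_t-v_t\|_{H^m}\lesssim C(\|v_0\|_{H^{m+1}})\,r^{-1}$. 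Adding the discretization part, which is $\lesssim C(\|v_0\|_{H^{m+1}})\,rR^{-1}\le C(\|v_0\|_{H^{m+1}})\,r^{-1}$ once $R$ is large because $r\in o(\log R)$, gives the asserted bound, of order $C(\|v_0\|_{H^{m+1}})\,r^{-1}$, which is logarithmically slow since $r\in o(\log R)$.

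The step I expect to be the main obstacle is the consistency part in the generic regularity case: for $v_0$ of mere $H^m$-regularity the geodesic equation is available only in weak form, so the clean Gronwall comparison breaks down and one must fall back on the abstract continuous-dependence statement; the care there is in upgrading pointwise-in-$t$ continuity of the geodesic flow to continuity into $C([0,1];H^m)$, which uses geodesic completeness and twofold differentiability of the metric. A secondary nuisance is the bookkeeping of constants: one must verify throughout that every prefactor depends on $v_0$ only through $\|v_0\|_{H^{m+1}}$, which rests on the metric being strong (so $\NHm{\cdot}$ and $\|\cdot\|_{H^m}$ are equivalent), on conservation of $\NHm{v_t}$, and on \cref{thm:normEstDeform}. (The threefold differentiability of the metric assumed in the regular case is not essential for the above route, but it guarantees via \cref{rem:weakMetrics} that the Riemannian exponential on $\DiffSobId^{m+1}$ is a local diffeomorphism, from which the same Lipschitz dependence on the initial velocity could be read off directly.)
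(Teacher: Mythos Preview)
Your argument is correct and follows the paper's approach almost verbatim: the same intermediate geodesic $v^r$ with initial velocity $\trunc[r]v_0$, the same triangle-inequality split, the same invocation of \cref{thm:convergence} for the discretization part $\|V^r_t-v^r_t\|_{H^m}$ together with the inverse inequality \eqref{eqn:normEstimateTruncation} and the hypothesis $r\in o(\log R)$, and the same appeal to continuity of the Riemannian exponential for the consistency part $\|v^r_t-v_t\|_{H^m}$ in the generic case.

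The one genuine difference is in the second claim. The paper uses the threefold differentiability of the metric to conclude that the Riemannian exponential is $C^1$, hence locally Lipschitz, and reads off $\|v^r_t-v_t\|_{H^m}\le C(\NHm{v_0})\|v^r_0-v_0\|_{H^m}$ directly. You instead run a hands-on Gronwall estimate: since both $v_t$ and $v^r_t$ lie in $H^{m+1}$ (by \cref{thm:regularityPreservationLDDMM}), the identity $\tfrac{\d}{\d t}\tfrac12\NHm{f_t}^2=\IPHm{[v_t,f_t]}{f_t}$ together with the integration-by-parts formula and \cref{thm:productsInB} give $\tfrac{\d}{\d t}\NHm{f_t}^2\lesssim\|v_t\|_{H^{m+1}}\NHm{f_t}^2$, from which the Lipschitz bound follows. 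This is a legitimate and slightly more elementary alternative; as you observe, it only needs the metric to be $C^2$ (so that geodesics are well-posed and \cref{thm:regularityPreservationLDDMM} applies), making the extra $C^3$ hypothesis superfluous for this particular statement. Either way the final rate is $C(\|v_0\|_{H^{m+1}})\,r^{-1}$, which is what ``logarithmically slow'' refers to (the $r$ appearing in the paper's displayed bound is evidently a misprint for $r^{-1}$).
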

\begin{proof}
Denote by $v^r$ the geodesic with initial velocity $v^r_0=V^r_0=\trunc[r]v_0$.
We can estimate
\begin{equation*}
\|V^r_t-v_t\|_{H^m}
\leq\|V^r_t-v^r_t\|_{H^m}+\|v^r_t-v_t\|_{H^m}.
\end{equation*}
By \cref{thm:convergence}, for $k\geq2$ we have
\begin{align}
\|V^r_t-v^r_t\|_{H^m}
&\leq Ce^{C\|v_0^r\|_{H^{m+1}}e^{C\|v_0^r\|_{H^m}}}\|v_0^r\|_{H^{m+k}}R^{1-k}\nonumber\\
&\leq Ce^{Cr\|v_0\|_{H^{m}}e^{C\NHm{v_0}}}r^k\|v_0\|_{H^{m}}R^{1-k},\label{eqn:rR_estimate}
\end{align}
which for $r\in o(\log R)$ tends to zero as $R\to\infty$.
Likewise, $\|v^r_t-v_t\|_{H^m}\to0$ uniformly in $t\in[0,1]$ due to $\|v^r_0-v_0\|_{H^m}\to0$ and the continuity of the Riemannian exponential map.

If the metric is three times differentiable, the Riemannian exponential is even differentiable so that $\|v^r_t-v_t\|_{H^m}\leq C(\NHm{v_0})\|v^r_0-v_0\|_{H^m}\leq\tilde C(\|v_0\|_{H^{m+1}})r$
for constants $C(\NHm{v_0}),\tilde C(\|v_0\|_{H^{m+1}})$ depending only on the norm of $v_0$.
Furthermore, the right-hand side of \eqref{eqn:rR_estimate} tends to zero faster than $R^{\frac32-k}$ (with a constant depending on $\NHm{v_0}$),
which implies the second claim.
\end{proof}

\begin{figure}
\centering
\begin{tikzpicture}
  \begin{loglogaxis}
    [width=.8\linewidth,height=.5\linewidth,
    xmin=4, xmax=64, domain=4:64,
    xlabel={frequency cutoff $R$},ylabel={error $\|e_1\|_{H^2}$},
    xticklabels={},
    extra x ticks={4,8,16,32,64},extra x tick labels={4,8,16,32,64},
    legend pos=south west,legend style={font=\scriptsize}]
    \addplot[gray,domain=8:32,forget plot] {1};
    \addplot[gray,domain=8:32,forget plot] {1/x};
    \addplot[gray,domain=8:32,forget plot] {1/x^2};
    \addplot[gray,domain=8:32,forget plot] {1/x^3};
    \addplot[mark=x,mark options={solid},black,dotted,thick] coordinates {(64,4.6161)(32,5.3727)(16,5.5677)(8,5.3352)(4,5.2693)};
    \addplot[mark=x,mark options={solid},black,dashdotted,thick] coordinates {(64,1.5275)(32,1.5290)(16,1.8744)(8,2.4835)(4,2.4092)};
    \addplot[mark=x,mark options={solid},black,dashed,thick] coordinates {(64,1.7057e-3)(32,8.0265e-3)(16,4.1156e-2)(8,1.3020e-1)(4,2.5303e-1)};
    \addplot[mark=x,mark options={solid},black,thick] coordinates {(64,6.2473e-6)(32,5.4148e-5)(16,4.7420e-4)(8,3.9541e-3)(4,1.9867e-2)};
    \legend{$v_0\in H^3(\T^d;\R^d)$,$v_0\in H^4(\T^d;\R^d)$,$v_0\in H^5(\T^d;\R^d)$,$v_0\in H^6(\T^d;\R^d)$}
  \end{loglogaxis}
\end{tikzpicture}
\caption{Numerical validation of the convergence result \cref{thm:convergence} in $d=2$ space dimensions for $m=3$.
The numerically estimated error of the discretized solution is shown as a function of the truncation cutoff $R$ for experiments with different Sobolev regularity of the initial condition $v_0$.
The grey lines indicate the rates $R^0$, $R^{-1}$, $R^{-2}$, and $R^{-3}$.}
\label{fig:convRates}
\end{figure}
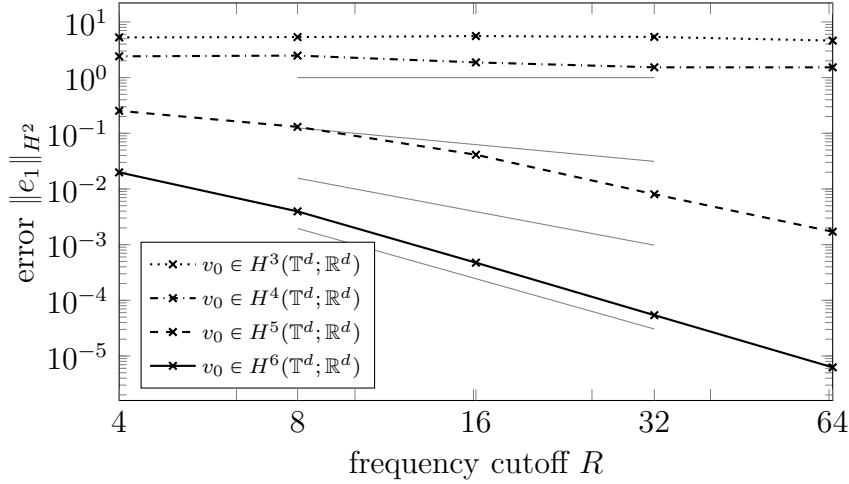

\begin{remark}[Convergence on $\R^d$]
The arguments hold verbatim also when replacing the domain $\T^d$ by $\R^d$ and interpreting $\trunc$ as the truncation operator on continuous Fourier space.
However, this is of course of little numerical relevance since the resulting truncated ODE \eqref{eqn:geodesicEquationLDDMMFourier} is still infinite-dimensional.

In certain situations the geodesic ODE on $\R^d$ can actually be reduced to the geodesic ODE on $\T^d$.
Indeed, an integral of \eqref{eqn:EPDiff} with momentum $\rho$, velocity $v$ and flow $\phi$ is given by $\rho_t=\adjointMap_{\phi_t}^*\rho_0$,
from which one can quickly see that if $\rho_0$ has compact support, then so does $\rho_t$ for all $t\in[0,1]$ \cite[Sec.\,4.3.1]{MiTrYo06}.
By rescaling and shifting coordinates we may assume $\rho_t$ to stay supported within $[0,1)^d$.
Hence, by appropriately choosing the operator $\Riesz_{\T^d}$ (the Riesz isomorphism to be used on the torus; the subscript distinguishes it from the original operator $\Riesz$ on $\R^d$)
one obtains the identical evolution of $\rho_t$ on $\T^d$ as on $\R^d$:
Operator $\Riesz_{\T^d}$ would be the composition of an extension (of $\rho_t$) from $[0,1)^d$ to $\R^d$ by zero
(in Fourier space this would mean to interpret the discrete Fourier sum as a sum of Dirac measures and convolving this measure with the Fourier transform $\hat\chi$ of the characteristic function $\chi$ of $[0,1)^d$),
an application of $\Riesz$ (which then yields $v_t$ on $\R^d$),
a restriction (of $v_t$) to $[0,1)^d$ (in Fourier space another convolution with $\hat\chi$),
and a reinterpretation as periodic function (by the Poisson summation formula, the discrete Fourier coefficients would just be the sampling of the continuous ones).
Unfortunately, our error analysis only applies in this setting if $\Riesz_{\T^d}$ is a Fourier multiplier, which for natural choices (such as $\Riesz=\L^{-1}$ with $\L=(1-\Delta)^m$) is not the case.

For initial momentum of unbounded support the situation of course becomes even more involved.
For a numerical approximation one would need to periodize momentum or velocity (typically via the Poisson summation formula)
in addition to truncating in Fourier space and then analyze the error inflicted by both.
A possible direction would be to truncate $\rho_0$ in space and investigate the resulting error,
after which in a second step the above reduction from $\R^d$ to $\T^d$ is applied.
\end{remark}

For a numerical validation of \cref{thm:convergence} we solve the (space-discrete) geodesic ODE \eqref{eqn:LDDMMapproximateLie} on $[0,1]$ for $R\in\{16,32,64,128\}$
using an explicit $6$-stage Runge--Kutta method (the $5$th order consistent $6$-stage part of the Dormand--Prince method) with step size $2^{-15}$
(by using different step sizes we checked that the error from the time discretization is negligibly small compared to the error from the spectral space discretization).
The result for $R=128$ is taken as a substitute for the true solution, and errors for smaller $R$ are computed with respect to that solution.
The experiments are performed in $d=2$ space dimensions with $m=3$ and the inner product $\IPHm{\cdot}{\cdot}$ defined via the corresponding differential operator $\L=(1-\Delta)^m$.
To obtain initial conditions $v_0$ of different Sobolev regularity we first define a complex-valued $H^0$-function $w$
by choosing each Fourier coefficient $\hat w(\xi)$ randomly and independently from $[0,1/\sqrt{1+|\xi|^2}/\log(2+|\xi|^2)^{1/2+\epsilon}]$ for $\epsilon=0.1$.
Via $\hat u(\xi)=\hat w(\xi)+\overline{\hat w(-\xi)}$ we obtain a real-valued $H^0$-function $u$, and we simply set
\begin{equation*}
\hat v_0(\xi)=\hat u(\xi)/(1+|\xi|^2)^{s/2}
\end{equation*}
to obtain a $H^s$-regular initial condition $v_0$.
\Cref{fig:convRates} shows that the numerically observed convergence rates for different initial Sobolev regularity are better than the one of \cref{thm:convergence} by one order;
it may thus be that, generically (or at least for the above choice of initial conditions $v_0$), convergence rates are better, something we leave for future investigation.

Let us finally comment on why we restricted to operators $\L$ that are Fourier multipliers:
First off, otherwise $P_t=\L V_t$ for a bandlimited $V_t$ will not be bandlimited so that one needs to approximate $\L$ (by $\trunc\L$ or in a different way) to obtain a working scheme.
However, central to our analysis was the estimate from \cref{thm:productsInB}, and it is unclear how to approximate $\L$ with a bandlimit while preserving such an estimate.

\paragraph{Acknowledgements.}
This work was supported by the Deutsche Forschungsgemeinschaft (DFG, German Research Foundation) via project 431460824 -- Collaborative Research Center 1450
and via Germany's Excellence Strategy project 390685587 -- Mathematics M\"unster: Dynamics-Geometry-Structure.

\bibliographystyle{alpha}
\bibliography{references}

\end{document}